\newtheorem{theorem}{Theorem}
\theoremstyle{plain}
\newtheorem{corollary}{Corollary}
\newtheorem{definition}{Definition}
\newtheorem{example}{Example}
\newtheorem{lemma}{Lemma}
\newtheorem{proposition}{Proposition}
\newtheorem{remark}{Remark}
\numberwithin{equation}{section}
\begin{document}
\title[Epis of ordered algebras]{On Epimorphisms of Ordered Algebras}
\author{Nasir Sohail}
\address{Department of Mathematics, Wilfrid Laurier University, Waterloo,
ON, Canada}
\email{nsohail@wlu.ca}
\thanks{}
\author{Boza Tasi\' c}
\curraddr{Ted Rogers School of Management, Ryerson University, Toronto, ON,
Canada}
\email{btasic@ryerson.ca}
\date{November 6, 2017}
\subjclass{}
\keywords{Ordered algebra, Variety, Epimorphism, Amalgamation}

\begin{abstract}
We prove that epimorphisms are surjective in certain categories of ordered 
$\mathcal{F}$-algebras. It then turns out that epimorphisms are also
surjective in the category of all (unordered) algebras of type $\mathcal{F}$.
\end{abstract}

\maketitle

\section{Motivation}

Various ordered algebras appeared in different contexts in mathematics, mostly during the
second half of the previous century. The monograph \cite{Fuchs} by Fuchs, written in the early 1960's, gave 
an outline of the theory of ordered groups, rings, fields and semigroups. Around the same time, 
universal algebra and lattice theory had also started to flourish. 
The initial account of ordered universal algebras, to the best knowledge of the
authors, was provided by Bloom in \cite{Bloom}. 

It has been recently shown in \cite{NS} that epimorphisms (briefly, epis) 
are preserved by the forgetful functor from the category of partially ordered monoids (briefly, pomonoids) 
to the category of (unordered) monoids. The aim of the present article is to pursue this line of research in the context 
of ordered universal algebras. 
Accordingly, for a type $\mathcal{F}$ of universal algebras, we prove that epis are surjective in
certain categories of ordered $\mathcal{F}$-algebras, as well as in their underlying category of all (unordered) 
$\mathcal{F}$-algebras. We begin by introducing basic notions in Section \ref{SecPrelim}. This is followed by a
discussion, in Section \ref{SecOTA}, about the ordered term algebras. 
In Section \ref{SecAmal} we introduce categorical concepts and construct objects that we need.
Finally, we employ a kind of term re-writing technique to prove our main result in Section \ref{SecEpis}.

\section{Preliminaries}\label{SecPrelim}

Basic universal algebraic and category theoretic notions and definitions are adopted from \cite{Burris} and \cite{MacLane} respectively. 
We denote the \emph{type} of a universal algebra by $\mathcal{F}$ and for every 
$k\in \mathbb{N}\cup \{0\}$ we denote by $\mathcal{F}_{k}$ the set of all $k$-ary operation symbols.

\begin{definition}[see \protect\cite{Bloom}]
\upshape\label{Ordered algebra}Given a type $\mathcal{F}$, an \emph{ordered $%
\mathcal{F}$-algebra} is a triple $\mathbf{A}=(A,\mathcal{F}^{\mathbf{A}%
},\leq _{A})$, where $(A,\mathcal{F}^{\mathbf{A}})$ is an $\mathcal{F}$%
-algebra and $(A,\leq _{A})$ is a partially ordered set (briefly, poset),
such that every $f^{\mathbf{A}}\in \mathcal{F}^{\mathbf{A}}$ is a monotone
function, i.e., if $f\in \mathcal{F}_{k}$ and $a_{1},b_{1},\ldots
,\,a_{k},b_{k}\in A$, then 
\begin{equation}
\left( a_{1}\leq _{A}b_{1}\wedge \cdots \wedge a_{k}\leq _{A}b_{k}\right)
\Rightarrow f^{\mathbf{A}}\left( a_{1},\ldots ,a_{k}\right) \leq _{A}f^{%
\mathbf{A}}\left( b_{1},\ldots ,b_{k}\right) .  \label{1}
\end{equation}
\end{definition}

\noindent When condition (\ref{1}) is satisfied, we say that $\leq _{A}$ is 
\textit{compatible with the operations} in $\mathcal{F}^{\mathbf{A}}$. Every 
$\mathcal{F}$-algebra $(A,\mathcal{F}^{\mathbf{A}})$ can be made into an
ordered $\mathcal{F}$-algebra by endowing it with the trivial order $=$. If
there is no ambiguity in dropping the index, we shall use $\leq $ instead of 
$\leq _{A}$.

Let $\mathbf{A}=(A, \mathcal{F}^{\mathbf{A}}, \leq_{A})$ and $\mathbf{B}=(B, 
\mathcal{F}^{\mathbf{B}}, \leq _{B})$ be ordered $\mathcal{F}$-algebras. We
say that $\mathbf{B}$ is a \textit{subalgebra} of $\mathbf{A}$ if

\begin{enumerate}
\item[(i)] $(B, \mathcal{F}^{\mathbf{B}})$ is a subalgebra of $(A, \mathcal{F%
}^{\mathbf{A}})$, and

\item[(ii)] $\leq _{B}\;=\;\leq _{A}\cap ~(B\times B)$.
\end{enumerate}

\noindent A \textit{homomorphism} $f:\mathbf{A}\longrightarrow \mathbf{B}$
of ordered $\mathcal{F}$-algebras is a monotone function $f:(A,\leq
_{A})\longrightarrow (B,\leq _{B})$ that is also a homomorphism of the
underlying $\mathcal{F}$-algebras. We call $f$ an \textit{order-embedding}
if it also reflects the order, i.e., $f(x)\leq _{B}f(y)\Rightarrow x\leq_{A}y$, for all $x,y\in A$. 
Every order-embedding is necessarily injective.
The \textit{product} $\prod_{i\in I}\mathbf{A}_{i}$ of a family $\mathbf{A}
_{i}=(A_{i},\mathcal{F}^{\mathbf{A_{i}}},\leq _{A_{i}})$ of ordered $\mathcal{F}$-algebras
is obtained by defining component-wise operations and
order on $\prod_{i\in I}A_{i}$. A class $\mathcal{K}$ of ordered $\mathcal{F}$-algebras 
is called a \textit{variety} if it is closed under homomorphic
images, subalgebras and products. According to the Birkhoff-type 
characterization of varieties given in \cite{Bloom}, $\mathcal{K}$ is a variety iff it consists precisely
of all ordered $\mathcal{F}$-algebras satisfying a given set of inequalities 
$s\leq t$, where $s$ and $t$ are terms of type $\mathcal{F}$. Naturally, every variety of ordered $\mathcal{F}$
-algebras gives rise to a category; indeed a $2$-category enriched over the category of posets. 
\textit{Epimorphisms (monomorphisms)} in varieties of ordered algebras are the right (left) cancelative 
homomorphisms.  It was observed in 
\cite{Valdis Nasir} that 
monomorphisms (isomorphisms) in the varieties of ordered 
$\mathcal{F}$-algebras are precisely the injective homomorphisms (surjective order-embeddings).

Let $\mathrm{F}$ denote the forgetful functor from a category $\mathfrak{C}$
of ordered $\mathcal{F}$-algebras to its underlaying category of unordered 
$\mathcal{F}$-algebras. Then $g\in  Hom(\mathfrak{C})$ is clearly an
epimorphism if $\mathrm{F}(g)$ is such. So, epimorphisms in the varieties of ordered 
$\mathcal{F}$-algebras need not be surjective, since they are not necessarily such
in the varieties of unordered $\mathcal{F}$-algebras. In this
article we prove that epimorphisms are surjective in the varieties of ordered 
$\mathcal{F}$-algebras defined by the inequalities $c\leq d$, where $c,d\in 
\mathcal{F}_{0}$. We shall denote these varieties by $\mathcal{F}$-\textbf{Oalg}$_{\leq }^{0}$.

Recall that a reflexive and transitive relation on a set is called a 
\textit{quasiorder}. Given an ordered $\mathcal{F}$-algebra 
$\mathbf{A}=(A,\mathcal{F}^{\mathbf{A}},\leq )$, a quasiorder $\sigma $ on $A$ is called a 
\textit{compatible quasiorder} on $\mathbf{A}$ if it is compatible with the
operations in $\mathcal{F}^{\mathbf{A}}$ and extends the order on $A$, i.e., 
$\leq ~\subseteq ~\sigma \,$. For a homomorphism $f:\mathbf{A}
\longrightarrow \mathbf{B}$ of ordered $\mathcal{F}$-algebras we define 
$\overset{\longrightarrow }{\ker }f$, the \textit{directed kernel} of $f$, by
\begin{equation*}
\overset{\longrightarrow }{\ker }f=\left\{ (a,b)\in A\times A:f(a)\leq
f(b)\right\} \text{.}
\end{equation*}
The relation $\overset{\longrightarrow }{\ker }f$ is a compatible quasiorder
on $\mathbf{A}$. In fact, every compatible quasiorder on $\mathbf{A}$ turns
out to be the directed kernel of some homomorphism 
$f:\mathbf{A}\longrightarrow \mathbf{B}$, see \cite{Gabor and Attila}. Given an ordered 
$\mathcal{F}$-algebra $\mathbf{A}=(A,\mathcal{F}^{\mathbf{A}},\leq )$ and a
congruence $\theta $ of the algebra $(A,\mathcal{F}^{\mathbf{A}})$, we
define the relation $\underset{\theta }{\leq }\,\subseteq A^{2}$ by 
\begin{equation*}
a\underset{\theta }{\leq }b\Leftrightarrow (\exists n\in \mathbb{N})(\exists
a_{1},b_{1},\cdots ,a_{n},b_{n}\in A)(a\leq a_{1}\theta b_{1}\leq \cdots
\leq a_{n}\theta b_{n}\leq b).
\end{equation*}
The relation $\underset{\theta }{\leq }$ is also a compatible quasiorder on 
$\mathbf{A}$.

An \textit{order-congruence} of an ordered $\mathcal{F}$-algebra 
$\mathbf{A}=(A,\mathcal{F}^{\mathbf{A}},\leq )$ is a congruence $\theta $ of the
algebra $(A,\mathcal{F}^{\mathbf{A}})$ satisfying the following condition: 
\begin{equation}
(\forall a,b\in A)\,(a\underset{\theta }{\leq }b\underset{\theta }{\leq }
a\Rightarrow a\theta b)\,\text{.}  \label{ccc}
\end{equation}
Condition (\ref{ccc}) is also known as the \textit{closed chain condition}.
If $\sigma $ is a compatible quasiorder on $\mathbf{A}$, then $\sigma \cap
\sigma ^{-1}$ is an order-congruence of $\mathbf{A}$. For example,
\begin{equation*}
\ker f=\left( \overset{\longrightarrow }{\ker }f\right) \cap \left( \overset{
\longrightarrow }{\ker }f\right) ^{-1}\text{.}
\end{equation*}

The \textit{regular quotient} (see \cite{Valdis Nasir}) of an ordered
algebra $\mathbf{A}=(A,\mathcal{F}^{\mathbf{A}},\leq )$, by an
order-congruence $\theta $, is the ordered algebra 
\begin{equation*}
\mathbf{A}/\theta =\left( A/\theta ,\mathcal{F}^{\mathbf{A}/\theta },\leq
_{A/\theta }\right) \text{,}
\end{equation*}
such that $(A/\theta ,\mathcal{F}^{\mathbf{A}/\theta })$ is the usual
algebraic quotient, and the order $\leq _{A/\theta }$ is defined by 
\begin{equation*}
\lbrack a]\leq _{A/\theta }[b]\,\Leftrightarrow \,a\underset{\theta }{\leq }b\,\text{.}
\end{equation*}
One can easily observe that $\leq _{A/\theta }$ is the coarsest compatible
order on $A/\theta $ that makes the canonical homomorphism $\theta
^{\natural }:\mathbf{A}\longrightarrow \mathbf{A}/\theta $ monotone.
Proof of the next theorem is straightforward, and is omitted.

\begin{theorem}
\label{Homomorphism} Let $f:\mathbf{A}\longrightarrow \mathbf{B}$ be a
homomorphism of ordered algebras, and let $\theta$ be an order-congruence on 
$\mathbf{A}$ such that $\underset{\theta }{\leq }\,\,\subseteq\, \overset{%
\longrightarrow}{\ker }f$. Then there exists a unique homomorphism $g:%
\mathbf{A}/\theta \longrightarrow \mathbf{B}$, such that $g\circ \theta
^{\natural }=f$.
\end{theorem}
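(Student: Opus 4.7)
The plan is to construct $g$ on the level of the underlying algebras first, and then verify that it respects the order on $\mathbf{A}/\theta$. Existence, well-definedness, and uniqueness all rest on a single preliminary observation: that the hypothesis $\underset{\theta}{\leq}\,\subseteq\,\overset{\longrightarrow}{\ker}f$ already forces $\theta\subseteq\ker f$.

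First I would verify this containment. For any $(a,b)\in\theta$, the one-term chain $a\leq a\,\theta\, b\leq b$ witnesses $a\underset{\theta}{\leq}b$, and symmetrically $b\underset{\theta}{\leq}a$. Applying the hypothesis twice gives $f(a)\leq_B f(b)$ and $f(b)\leq_B f(a)$, whence $f(a)=f(b)$ by antisymmetry of $\leq_B$. Thus $\theta\subseteq\ker f$ as relations on the underlying algebra $(A,\mathcal{F}^{\mathbf{A}})$.

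Next, by the usual first isomorphism theorem for (unordered) $\mathcal{F}$-algebras, the rule $g([a]):=f(a)$ defines a unique $\mathcal{F}$-algebra homomorphism $g:(A/\theta,\mathcal{F}^{\mathbf{A}/\theta})\longrightarrow (B,\mathcal{F}^{\mathbf{B}})$ with $g\circ\theta^{\natural}=f$. It then remains to check that $g$ is monotone with respect to $\leq_{A/\theta}$. Given $[a]\leq_{A/\theta}[b]$, the definition of the regular quotient order yields $a\underset{\theta}{\leq}b$, and the hypothesis again delivers $f(a)\leq_B f(b)$, i.e. $g([a])\leq_B g([b])$. Hence $g$ is a homomorphism of ordered $\mathcal{F}$-algebras.

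Uniqueness of $g$ is immediate: any homomorphism $g'$ with $g'\circ\theta^{\natural}=f$ must satisfy $g'([a])=f(a)=g([a])$, since $\theta^{\natural}$ is surjective. There is essentially no obstacle here; the only non-routine point is the observation at the outset that the directed-kernel hypothesis implicitly encodes the ordinary kernel containment needed to define $g$ as a function, which is precisely what makes the statement non-trivial beyond the unordered first isomorphism theorem.
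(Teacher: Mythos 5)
Your proof is correct, and since the paper omits this proof as ``straightforward,'' your argument is precisely the standard one the authors intend: deduce $\theta \subseteq \ker f$ from the hypothesis $\underset{\theta}{\leq}\,\subseteq\,\overset{\longrightarrow}{\ker}f$ (via the one-step chain $a\leq a\,\theta\,b\leq b$ and antisymmetry in $\mathbf{B}$), factor $f$ through the unordered quotient, verify monotonicity of $g$ directly from the definition of $\leq_{A/\theta}$, and get uniqueness from surjectivity of $\theta^{\natural}$. No gaps.
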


Given an ordered algebra $\mathbf{A}=(A,\mathcal{F}^{\mathbf{A}},\leq )$ and
a compatible quasiorder $\sigma $, we define the \textit{non-regular quotient%
} of $\mathbf{A}$ by $\sigma $ to be the ordered algebra%
\begin{equation*}
\mathbf{A}/\sigma =\left( A/\left( \sigma \cap \sigma ^{-1}\right) ,\mathcal{%
F}^{\mathbf{A}/\left( \sigma \cap \sigma ^{-1}\right) },\preccurlyeq \right) 
\text{,}
\end{equation*}%
where $\left( A/\left( \sigma \cap \sigma ^{-1}\right) ,\mathcal{F}^{\mathbf{%
A}/\left( \sigma \cap \sigma ^{-1}\right) }\right) $ is the usual algebraic
quotient and the order $\preccurlyeq $ is defined by 
\begin{equation*}
\lbrack a]\preccurlyeq \lbrack b]\,\Leftrightarrow \,a\sigma b\,\text{.}
\end{equation*}%
Note that $\mathbf{A}/\left( \sigma \cap \sigma ^{-1}\right) $ denotes the
regular quotient algebra 
\begin{equation*}
\left( A/\left( \sigma \cap \sigma ^{-1}\right) ,\mathcal{F}^{\mathbf{A}%
/\left( \sigma \cap \sigma ^{-1}\right) },\leq _{A/\left( \sigma \cap \sigma
^{-1}\right) }\right) \,\text{.}
\end{equation*}%
Both the quotients $\mathbf{A}/\sigma $ and $\mathbf{A}/\left( \sigma \cap
\sigma ^{-1}\right) $ have the same universe and operations, however $%
\mathbf{A}/\sigma $ is \textquotedblleft more ordered\textquotedblleft\ than 
$\mathbf{A}/\left( \sigma \cap \sigma ^{-1}\right) $ in the sense that $%
\leq _{A/\left( \sigma \cap \sigma ^{-1}\right) }$ is contained in $%
\preccurlyeq $. Every variety of ordered algebras is closed under both
regular and non-regular quotients.

\section{Ordered Term Algebra}\label{SecOTA}

Let $\mathcal{F}$ be a type of algebras and let $X$ be a set such that 
$X\cap \mathcal{F}=\emptyset $. We call $X$ the set of \textit{variables} and
we assume that either $X$ or $\mathcal{F}_{0}$ is nonempty. Whenever $X$
($\mathcal{F}_{0}$) is nonempty we also assume that it is
equipped with a partial order which we denote by $\leq _{X}$ (
$\leq _{\mathcal{F}_{0}}$). A \textit{word} on $X\cup \mathcal{F}$ is a
nonempty finite sequence of elements of $X\cup \mathcal{F}$. We concatenate
sequences by simple juxtaposition.

\begin{definition}
\upshape Given a type $\mathcal{F}$ of algebras and a set $X$ of variables
we define, by recursion on $n$, the sets $T_{n}$ of words on 
$X\cup \mathcal{F}$ by 
\begin{eqnarray*}
T_{0} &=&\{w|w\in X\cup \mathcal{F}_{0}\} \\
T_{n+1} &=&T_{n}\cup \{fs_{1}s_{2}\dots s_{k}|f\in \mathcal{F}
_{k},s_{1},s_{2},\dots ,s_{k}\in T_{n}\}.
\end{eqnarray*}
We, then, define $T_{\mathcal{F}}(X)=\cup _{n\in \mathbb{N}}T_{n}$, called
the set of \textit{terms of type} $\mathcal{F}$ \textit{over} $X$.
\end{definition}

\noindent Although, in algebra we write more often $f(s_{1},s_{2},\dots
,s_{k})$ instead of $fs_{1}s_{2}\dots s_{k}$, we shall be using both 
notations interchangeably. The set $T_{\mathcal{F}}(X)$ of all terms of type 
$\mathcal{F}$ over the set of variables $X$ is the universe of the \textit{term algebra} 
$\mathbf{T}(X)=\left( T_{\mathcal{F}}(X),\mathcal{F}^{\mathbf{T}(X)}\right)$, where for 
$f\in \mathcal{F}_{k}$ the operation $f^{\mathbf{T}(X)}$ is defined by 
\begin{equation}
f^{\mathbf{T}(X)}\left( t_{1},\ldots ,t_{k}\right) =f(t_{1},\ldots ,t_{k}), 
\label{talgop}
\end{equation}
$t_{1},\ldots ,t_{k}\in T_{\mathcal{F}}(X)$. A \textit{constant free term}
is a term of type $\mathcal{F}$ over $X$ that contains no constant symbols.
We use $t(x_{1},x_{2},\dots ,x_{n})$ to denote a term whose variables are
among $x_{1},x_{2},\dots ,x_{n}$. 

Recall that a\textit{\ }\textit{tree} comprises a set of points, called \textit{%
nodes}, and a set of lines, called \textit{edges}. The edges connect the
nodes so that there is exactly one path between any two different nodes. A
rooted tree is a tree in which a node is designated as the root. Formally,
we define \textit{rooted trees} inductively as follows.

\begin{enumerate}
\item[(i)] A single node $n$ is a tree. In this case $n$ is the root of this
one-node tree.

\item[(ii)] If $T_{1},T_{2},\dots ,T_{k}$ are trees with the roots $%
c_{1},c_{2},\dots ,c_{k}$ respectively and $r$ is a new node, then we form a
new tree $T$ from $r$ and $T_{1},T_{2},\dots ,T_{k}$ in the following way.

\begin{enumerate}
\item $r$ becomes the root of $T$.

\item We add an edge from $r$ to each of the nodes $c_{1},c_{2},\dots ,c_{k}.$%
\end{enumerate}
\end{enumerate}

\noindent From now on, by a tree we shall always mean a rooted tree. A node $%
a$ in a tree is called the \textit{parent} of a node $b$ if $a$ is adjecent
to $b$ on the path between $b$ and the root. In this case, we also call $b$
a \textit{child} of $a$. A \textit{leaf} is a node of a tree that has no
children. A \textit{labelled tree} is a tree with a label attached to each
node. An \textit{ordered labeled tree} is a labeled tree in which all
children of each non-leaf node are ordered linearly first to last
(left to right). Let $T$ be an ordered labeled tree. By removing all 
leaf labels from $T$ we get its \textit{skeleton}, which we shall denote by $%
skelt(T)$. Let us further agree that the trees we consider will
all be ordered labeled trees.

\begin{definition}
\upshape Given a term $t\in T_{\mathcal{F}}(X)$, we define the \textit{tree
of }$\,t$, denoted by $Tree(t)$ as follows.

\begin{enumerate}
\item If $t\in X\cup \mathcal{F}_{0}$, then $Tree(t)$ is just one-node: 
\begin{figure}[h]
\begin{equation*}
\xymatrix{\overset{t}{\bullet} }
\end{equation*}
\end{figure}

\item $Tree(fs_{1}s_{2}\dots s_{k})$ is depicted in Figure \ref{Tree}.
\begin{figure}[h]
\begin{equation*}
\xymatrix{ \overset{Tree(s_1)}{\bullet} \ar@{-}[drr] &
\overset{Tree(s_2)}{\bullet} \ar@{-}[dr] & \cdots
&\overset{Tree(s_{k-1})}{\bullet} \ar@{-}[dl] & \overset{Tree(s_k)}{\bullet}
\ar@{-}[dll] \\ & & \underset{f}{\bullet} & & }
\end{equation*}
\captionof{figure}{} \label{Tree}
\end{figure}
\end{enumerate}
\end{definition}

\noindent It is easy to see that the tree of a term always satisfies the following
conditions:

\begin{enumerate}
\item Each leaf is labelled by an element from $X\cup \mathcal{F}_{0}.$

\item If $n$ is a non-leaf node with $k$ children then $n$ is labelled
by an operation symbol $f\in \mathcal{F}_{k}$.
\end{enumerate}

\noindent Conversely, every tree satisfying (1) and (2) is the tree of some
term $t\in T_{\mathcal{F}}(X)$. The correspondence between isomorphism 
types of such trees and their terms is a bijection.

\begin{definition}
\upshape Let $t\in T_{\mathcal{F}}(X)$, such that $Tree(t)$ has $n$ leaves.
We define a function $leaf(t):\{1,\ldots ,n\}\longrightarrow X\cup \mathcal{F%
}_{0}$ by 
\begin{equation*}
leaf(t)(i)=l_{i},
\end{equation*}%
where $l_{i}$ is the $i^{th}$ leaf label of $Tree(t)$.
\end{definition}

\begin{example}
\upshape If $t=fgx_{2}x_{1}cfx_{1}x_{4}$ where $g\in \mathcal{F}_{3}$, $f\in 
\mathcal{F}_{2}$ and $c\in \mathcal{F}_{0}$, then $Tree(t)$ has five leaves
as shown in Figure \ref{treet}. We have $leaf(t)(1)=x_{2}$, $%
leaf(t)(2)=leaf(t)(4)=x_{1}$, $leaf(t)(3)=c$ and $leaf(t)(5)=x_{4}$. The
skeleton of $Tree(t)$ is given in Figure \ref{skt}.%
\end{example}
\vspace{0.5cm} 
\begin{minipage}{\textwidth}
\resizebox{2in}{!}{
\begin{minipage}[b]{0.35\textwidth}
    \centering
\begin{equation*} 
\xymatrix{
\overset{x_2}{\bullet} \ar@{-}[dr]   & \overset{x_1}{\bullet} \ar@{-}[d] & \overset{c}{\bullet} \ar@{-}[dl] &  &
\overset{x_1}{\bullet} \ar@{-}[dr]   &  &  \overset{x_4}{\bullet} \ar@{-}[dl]  \\
  & \underset{g}{\bullet} \ar@{-}[drr] &  &                                 & &  \underset{f}{\bullet} \ar@{-}[dll] & \\ 
  &                                                 &  &  \underset{f}{\bullet} &  &                                               &  }
\end{equation*}
  \captionof{figure}{$Tree(t)$} \label{treet}
 \end{minipage}}
\hspace{1in}
\resizebox{2in}{!}{
\begin{minipage}[b]{0.35\textwidth}
    \centering
\begin{equation*} 
\xymatrix{
\overset{}{\bullet} \ar@{-}[dr]   & \overset{}{\bullet} \ar@{-}[d] & \overset{}{\bullet} \ar@{-}[dl] &  &
\overset{}{\bullet} \ar@{-}[dr]   &  &  \overset{}{\bullet} \ar@{-}[dl]  \\
  & \underset{g}{\bullet} \ar@{-}[drr] &  &                                 & &  \underset{f}{\bullet} \ar@{-}[dll] & \\ 
  &                                                 &  &  \underset{f}{\bullet} &  &                                               &  }
\end{equation*}
\captionof{figure}{$skelt(t)$} \label{skt}
 \end{minipage}}
\end{minipage}

\vspace{0.5cm}

\noindent Note that the tree of $s=fgx_{2}x_{1}x_{1}fx_{4}c$ has the same
skeleton as $Tree(t)$.

\begin{definition}
\upshape Given $t\in T_{\mathcal{F}}(X)$, we denote by $var(t)$ the
sequence of variables of $t$, written in the same order as they appear on the leaves of $Tree(t)$.
\end{definition}

If $var(t)=(x_{i_{1}},\ldots ,x_{i_{k}})$, we shall write $t[x_{i_{1}},\ldots ,x_{i_{k}}]$ to specify the variables that are
explicitly occurring in $t$ and their order of occurrence. If $t$ is a term built from constants only, we let 
$var(t)=({\tiny \ })$ and we write $t[{\small \ }]$. This notation will be used when an additional
precision about the variables explicitly occurring in $t$ and their order is needed.

For the tree given in Figure \ref{treet}, we have $var(t)=(x_{2},x_{1},x_{1},x_{4})$. We may also denote this
tree by $t[x_{2},x_{1},x_{1},x_{4}]$. 
Observe that $var(r)=var(t)$, for $r(x_{1},x_{2},x_{3},x_{4})=gcfx_{2}x_{1}fx_{1}x_{4}$, however 
$skelt(r)\not=skelt(t)$. 

From now on, we fix a countably infinite sequence $z_{1},z_{2},\dots,z_{n},\dots $ of formal variables, 
and we let $V=\{z_{n}|n\in \mathbb{N}\}.$ We shall also consider the terms of type $\mathcal{F}$ over $V$.

\begin{definition} \label{regterm}
\upshape A term $t\in T_{\mathcal{F}}(V)$ is called \textit{regular} if for some $n\geq 1$, 
$var(t)=(z_{1},z_{2},\dots ,z_{n})$. We call $n$ the \textit{arity} of the regular term 
$t$ and write $ar(t)=n$. The set of all regular terms of type $\mathcal{F}$ is denoted by 
$T_{\mathcal{F}}^{reg}(V)$.
\end{definition}

\noindent The following two lemmas provide some useful facts about regular terms.

\begin{lemma}
\label{t=regt} Let $t\in T_{\mathcal{F}}(X)$ be such that $Tree(t)$ has $n$ leafs and 
$var(t)=(x_{i_{1}},\ldots ,x_{i_{k}})$, for some $k\leq n$. Then there exist a constant free
term $\overline{t}(z_{1}, \ldots, z_{n})\in T_{\mathcal{F}}^{reg}(V)$ of arity $n$, such that

\begin{itemize}
\item[(i)] $t[x_{i_{1}},\ldots ,x_{i_{k}}]=\overline{t}(\overline{x}_1, 
\overline{x}_2, \dots, \overline{x}_n)$ where $\overline{x}_i\in
\{x_{i_{1}},\ldots ,x_{i_{k}}\}\cup \mathcal{F}_{0}$ for $1\leq i\leq n$,

\item[(ii)] $o(t)=o(\overline{t})$.
\end{itemize}
\end{lemma}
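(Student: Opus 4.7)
The plan is to proceed by structural induction on the term $t$, building $\overline{t}$ so that $Tree(\overline{t})$ has the same skeleton as $Tree(t)$, but with the $i$-th leaf (reading left to right) relabeled by the fresh variable $z_i$. In other words, $\overline{t}$ is obtained from $t$ by forgetting the leaf labels of $Tree(t)$ and relabeling them $z_1, z_2, \ldots, z_n$ in order. The $\overline{x}_i$ promised in (i) are then simply the original leaf labels of $Tree(t)$, recorded in left-to-right order; substituting $\overline{x}_i$ for $z_i$ in $\overline{t}$ then recovers $t$.

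For the base case, if $t \in X \cup \mathcal{F}_0$ then $Tree(t)$ is a single node, so $n=1$. Set $\overline{t}(z_1) = z_1 \in T_{\mathcal{F}}^{reg}(V)$; this is constant-free, regular of arity $1$, and taking $\overline{x}_1 = t$ verifies (i). Both terms have zero operation symbols, yielding (ii).

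For the inductive step, write $t = f(s_1, \ldots, s_m)$ with $f \in \mathcal{F}_m$ and let $n_j$ be the number of leaves of $Tree(s_j)$, so $n = n_1 + \cdots + n_m$. By the inductive hypothesis, for each $j$ there is a constant-free regular term $\overline{s}_j(z_1, \ldots, z_{n_j})$ and leaf labels $\overline{x}^{(j)}_1, \ldots, \overline{x}^{(j)}_{n_j} \in \{x_{i_1},\ldots,x_{i_k}\} \cup \mathcal{F}_0$ with $s_j = \overline{s}_j(\overline{x}^{(j)}_1,\ldots,\overline{x}^{(j)}_{n_j})$ and $o(s_j) = o(\overline{s}_j)$. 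For each $j$, let $N_j = n_1 + \cdots + n_{j-1}$ and let $\overline{s}_j'$ be the term obtained from $\overline{s}_j$ by replacing each $z_i$ with $z_{N_j + i}$. Define
\[
\overline{t}(z_1, \ldots, z_n) \;=\; f\bigl(\overline{s}_1'(z_1,\ldots,z_{n_1}),\; \overline{s}_2'(z_{n_1+1},\ldots,z_{n_1+n_2}),\;\ldots,\; \overline{s}_m'(z_{N_m+1},\ldots,z_n)\bigr).
\]
Since each $\overline{s}_j'$ is constant-free and the leaves of $Tree(\overline{t})$ are precisely $z_1, \ldots, z_n$ read left to right, $\overline{t}$ is a constant-free regular term of arity $n$. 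Concatenating the substitution tuples $(\overline{x}^{(1)}_1,\ldots,\overline{x}^{(1)}_{n_1},\ldots,\overline{x}^{(m)}_1,\ldots,\overline{x}^{(m)}_{n_m})$ gives the $\overline{x}_i$ required by (i), and (ii) follows from $o(t) = 1 + \sum_j o(s_j) = 1 + \sum_j o(\overline{s}_j) = o(\overline{t})$.

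The only real subtlety is the bookkeeping of leaf order: one must ensure that the recursive re-indexing really yields $var(\overline{t}) = (z_1, \ldots, z_n)$, so that $\overline{t}$ satisfies Definition \ref{regterm} verbatim. This is guaranteed because trees are taken to be ordered labeled trees, so the left-to-right reading of leaves of $Tree(t)$ coincides with the concatenation of the left-to-right readings of $Tree(s_1), \ldots, Tree(s_m)$, and the index shift by $N_j$ in each subterm lines up exactly with this concatenation. No other step poses a genuine obstacle.
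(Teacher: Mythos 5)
Your proof is correct and matches the paper's intended argument: the paper simply says ``by induction on the number $o(t)$ of operation symbols in $t$'' and omits the details, while you carry out exactly that induction (structurally on $t$), building $\overline{t}$ by relabeling the leaves of $Tree(t)$ with $z_{1},\ldots ,z_{n}$ in left-to-right order. The bookkeeping of the index shifts $N_{j}$ is the only delicate point, and you handle it correctly.
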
 

\begin{proof} By induction on the number $o(t)$ of operation symbols in $t$.
\end{proof}

\begin{lemma}
\label{RW}Let $\mathbf{T}(X)$ be the term algebra of the type $\mathcal{F}$
over $X$. Let $t_{1}(z_{1},\dots ,z_{n}), \,t_{2}(z_{1},\dots ,z_{m})\in T_{\mathcal{F}}^{reg}(V)$ be
constant free, such that $var(t_{1})=(z_{1},\dots ,z_{n})$, 
$var(t_{2})=(z_{1},\dots ,z_{m})$. If 
\begin{equation*}
t_{1}(a_{1},\ldots ,a_{n})=t_{2}(b_{1},\ldots ,b_{m})\text{,}
\label{t(a)=t(b)}
\end{equation*}
for some $a_{1},\ldots ,a_{n},b_{1},\ldots ,b_{m}\in X\cup \mathcal{F}_{0}$,
then $n=m$ and we have

\begin{enumerate}
\item[(i)] $t_{1}(z_{1},\dots ,z_{n})=t_{2}(z_{1},\dots ,z_{m})$,

\item[(ii)] $(a_{1},\ldots ,a_{n})=(b_{1},\ldots ,b_{m})$.
\end{enumerate}
\end{lemma}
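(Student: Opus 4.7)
The plan is to translate the word equality $t_1(a_1,\ldots,a_n)=t_2(b_1,\ldots,b_m)$ into an equality of trees, and then exploit the rigidity of constant-free regular terms.

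First I would record the following observation, proved by an easy induction on $o(t)$: if $t\in T_{\mathcal{F}}(V)$ is constant free and regular with $var(t)=(z_1,\ldots,z_n)$, then $Tree(t)$ has exactly $n$ leaves whose labels, read from left to right, are $z_1,z_2,\ldots,z_n$. Consequently, since each $a_i$ is a single symbol in $X\cup\mathcal{F}_0$, the tree $Tree(t(a_1,\ldots,a_n))$ is obtained from $Tree(t)$ merely by relabelling the $i$-th leaf by $a_i$; in particular
\[
 skelt\bigl(Tree(t(a_1,\ldots,a_n))\bigr)=skelt(Tree(t)),
\]
and the $i$-th leaf label of $Tree(t(a_1,\ldots,a_n))$ is exactly $a_i$. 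The same applies to $t_2$ with the $b_j$'s.

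Next, from the hypothesis $t_1(a_1,\ldots,a_n)=t_2(b_1,\ldots,b_m)$, the bijection between terms and their trees forces
\[
 Tree\bigl(t_1(a_1,\ldots,a_n)\bigr)=Tree\bigl(t_2(b_1,\ldots,b_m)\bigr).
\]
Passing to skeletons, I would conclude $skelt(Tree(t_1))=skelt(Tree(t_2))$. Counting leaves on each side and using the first observation (number of leaves equals arity, since each $t_i$ is constant free regular) yields $n=m$. Since $Tree(t_1)$ and $Tree(t_2)$ now have the same skeleton and both carry the leaf-label sequence $(z_1,\ldots,z_n)$, their labelled trees coincide, proving $t_1(z_1,\ldots,z_n)=t_2(z_1,\ldots,z_m)$, which is (i). Finally, reading off the $i$-th leaf of the (identified) tree $Tree(t_1(a_1,\ldots,a_n))=Tree(t_2(b_1,\ldots,b_n))$ in the two possible ways gives $a_i=b_i$ for every $i$, which is (ii).

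The only non-cosmetic step is the base observation that substituting single symbols from $X\cup\mathcal{F}_0$ for the variables in a constant-free regular term affects only leaf labels and not the skeleton; this is the place where constant-freeness is essential (otherwise a leaf of $Tree(t_i)$ could already carry a constant, and the $i$-th entry of $var(t_i)$ would not correspond to the $i$-th leaf). The rest is a straightforward reading of the tree. I expect this leaf-counting bookkeeping, rather than any deep argument, to be the main thing to handle with care.
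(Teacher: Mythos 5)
Your argument is correct: since the paper only records this proof as ``straightforward,'' your tree/skeleton reconstruction is exactly the intended elementary argument, using the term--tree bijection and the fact that substituting single symbols from $X\cup\mathcal{F}_{0}$ into a constant-free regular term only relabels leaves, so that equal terms force equal skeletons (hence $n=m$ and (i)) and equal leaf sequences (hence (ii)). Your remark that constant-freeness is what keeps argument positions aligned with leaf positions is precisely the point where the lemma would otherwise fail, so nothing is missing.
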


\begin{proof}
Straightforward. 
\end{proof}

\begin{definition}
\upshape Let $\mathbf{A}$ be an $\mathcal{F}$-algebra, and let $X\subseteq A$. The 
\textit{X-translations} of $\mathbf{A}$ are the unary polynomials of $\mathbf{A}$ of the form 
\begin{equation*}
p(u)=t(a_1, \dots, a_{i-1}, u, a_{i+1}, \ldots, a_n)\,\, \mbox{for all}%
\,\,u\in A,
\end{equation*}
where $t\in T_{\mathcal{F}}^{reg}(V)$ is of arity $n$ and $a_1, \dots, a_{i-1}, a_{i+1},
\ldots, a_n\in X$.
\end{definition}

A \textit{translation} of $\mathbf{A}$ is any $A$-translation of $\mathbf{A}.$
The $X$-translations induced by the constant free regular terms will be
called $X$-\textit{cfr translations}. Clearly, if $\mathbf{A}$ is an 
$\mathcal{F}$ algebra generated by $X$, then the
translations of $\mathbf{A}$ coincide with the $X$-translations of $\mathbf{A}$. 
Lemma \ref{t=regt} further implies that the $X$-translations of $\mathbf{A}$ 
coincide with the $X\cup \mathcal{F}_{0}^{\mathbf{A}}$ - cfr translations of $\mathbf{A}$.  
Let $\mathbf{A}=(A, \mathcal{F}^{\mathbf{A}}, \leq)$ be an ordered algebra
generated by $X\subseteq A$, and let $\mathsf{H}$ be a binary relation on $A$%
. We define a new relation $\overset{\mathsf{H}}{\longrightarrow }%
\,\subseteq\, A^{2}$ by 
\begin{equation}
\overset{\mathsf{H}}{\longrightarrow }\,=\,\{(p(u), p(v))\, |\, (u, v)\in
H\,\, \mbox{and}\,\, p\,\, \mbox{is an}\,\, X\cup \mathcal{F}_{0}^{\mathbf{A}} -\mbox{cfr translation}\}.
\end{equation}
If the generating set $X$ is not mentioned, $p$ will be assumed to
denote an $A$-cfr~translation. The following lemma gives a useful description of the compatible quasiorder
generated by $\mathsf{H}$ which we shall denote by $\Sigma _{\mathsf{H}}$.

\begin{lemma}[Cf. Lemma 4 of \protect\cite{Valdis Lauri Nasir 2}]
\label{Induced} Let $\mathbf{A}=(A, \mathcal{F}^{\mathbf{A}}, \leq)$
be an ordered algebra and $\mathsf{H}\subseteq A^{2}$. For $c,\,c^{\prime
}\in A$ we have $c\,\Sigma _{\mathsf{H}}\,c^{\prime }$ iff either $c\,\leq\,
c^{\prime }$ or there exists a scheme 
\begin{equation*}
c\leq p_{1}(a_{1})\overset{\mathsf{H}}{ \longrightarrow }p_{1}(a_{1}^{\prime
})\leq p_{2}(a_{2})\overset{\mathsf{H}}{ \longrightarrow }%
p_{2}(a_{2}^{\prime })\leq \cdots \leq p_{n}(a_{n})\overset{\mathsf{H}}{%
\longrightarrow } p_{n}(a_{n}^{\prime })\leq c^{\prime } \text{.}
\end{equation*}
\end{lemma}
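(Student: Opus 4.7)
Let $\rho$ denote the relation on $A$ consisting of those pairs $(c,c')$ for which either $c\leq c'$ or a scheme of the form displayed in the lemma exists. The plan is to show that $\rho$ is a compatible quasiorder on $\mathbf{A}$ extending $\leq$ and containing $\mathsf{H}$, and that $\rho$ is contained in every such quasiorder. Since $\Sigma_{\mathsf{H}}$ is by definition the smallest compatible quasiorder generated by $\mathsf{H}$, this gives $\rho=\Sigma_{\mathsf{H}}$ and hence both directions of the stated biconditional.

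First I would dispatch the easy parts. The inclusion $\leq\,\subseteq\,\rho$ and reflexivity are immediate from the first clause. For $\mathsf{H}\subseteq\rho$, I take the identity translation $p(x)=x$, which is a cfr translation induced by the regular constant-free term $z_1\in T_{\mathcal{F}}^{reg}(V)$ of arity $1$; any $(u,v)\in\mathsf{H}$ is then witnessed by the one-step scheme $u\leq p(u)\overset{\mathsf{H}}{\longrightarrow}p(v)\leq v$. Transitivity of $\rho$ follows by concatenating two witnessing schemes at their shared endpoint (using $c'\leq c'$ as a trivial bridge when needed).

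The main obstacle is compatibility of $\rho$ with every $f\in\mathcal{F}_k$. Given pairs $(c_i,c_i')\in\rho$ for $1\leq i\leq k$, my strategy is to change one coordinate at a time and splice the resulting schemes by transitivity. Fix a coordinate $i$ and consider a typical step $p_j(a_j)\overset{\mathsf{H}}{\longrightarrow}p_j(a_j')$ in the scheme witnessing $c_i\,\rho\,c_i'$, where $p_j$ is a cfr translation induced by a constant-free regular term $t_j$. I form the unary polynomial
\[
\tilde{p}_j(u)=f\bigl(c_1',\ldots,c_{i-1}',\,p_j(u),\,c_{i+1},\ldots,c_k\bigr).
\]
The key observation is that substituting the term $t_j$ into the $i$-th argument slot of $f(z_1,\ldots,z_k)$ and relabelling all leaves in their order of appearance yields again a constant-free regular term (this is where Lemma \ref{t=regt} and the elementary combinatorics of $Tree(\cdot)$ enter), so $\tilde{p}_j$ is itself an $A$-cfr translation. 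Monotonicity of $f$ propagates each $\leq$-step of the original scheme to a $\leq$-step of the new one, and each $\overset{\mathsf{H}}{\longrightarrow}$-step becomes $\tilde{p}_j(a_j)\overset{\mathsf{H}}{\longrightarrow}\tilde{p}_j(a_j')$. Concatenating the $k$ schemes obtained as $i$ runs from $1$ to $k$ produces the required scheme from $f(c_1,\ldots,c_k)$ to $f(c_1',\ldots,c_k')$.

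For minimality, let $\sigma$ be any compatible quasiorder extending $\leq$ and containing $\mathsf{H}$, and let $(c,c')\in\rho$. If $c\leq c'$ then $(c,c')\in\sigma$ directly; otherwise each $\leq$-step of the witnessing scheme lies in $\sigma$ because $\leq\,\subseteq\,\sigma$, and each step $p_j(a_j)\overset{\mathsf{H}}{\longrightarrow}p_j(a_j')$ lies in $\sigma$ because $(a_j,a_j')\in\mathsf{H}\subseteq\sigma$ and $\sigma$ is closed under every cfr translation (being compatible with $\mathcal{F}^{\mathbf{A}}$). Transitivity of $\sigma$ stitches the steps together, giving $(c,c')\in\sigma$. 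Hence $\rho\subseteq\sigma$ and the proof is complete.
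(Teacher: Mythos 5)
Your argument is correct: you show that the relation defined by the displayed schemes is itself a quasiorder containing $\leq$ and $\mathsf{H}$, is compatible with the operations (via the composition $u\mapsto f(c_1',\ldots,c_{i-1}',p_j(u),c_{i+1},\ldots,c_k)$, which is again an $A$-cfr translation after substitution and relabelling), and is contained in any compatible quasiorder containing $\mathsf{H}$, which is exactly the standard generation argument. The paper itself does not prove this lemma but only cites Lemma 4 of the reference by Laan, Sohail and Tart, and your proof is the expected one behind that citation; the only points stated without detail (that $z_1$ induces the identity translation, and that compatibility plus reflexivity of $\sigma$ yields closure under all translations by induction on terms) are routine and correct.
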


\noindent Because $(a,b)\in \mathsf{H}$ implies that $a\,\Sigma _{\mathsf{H}%
}\,b$, we have $\left[ a\right]\preccurlyeq \left[ b\right]$ in $\mathbf{A}%
/\Sigma _{\mathsf{H}}$ whenever $(a,b)\in \mathsf{H}$. The following
proposition follows easily from Lemma \ref{Induced}.

\begin{proposition}
Let $\theta$ be an order-congruence on $\mathbf{A}=(A, \mathcal{F}^{%
\mathbf{A}}, \leq)$ and $\mathsf{H}\subseteq A^{2}$. If for every $a, b \in
A $, $(a,b)\in \mathsf{H}\Rightarrow a\underset{\theta }{\leq }b$, then $%
\Sigma _{\mathsf{H}}\cap \Sigma _{\mathsf{H}}^{-1}\,\subseteq \,\theta$.
\end{proposition}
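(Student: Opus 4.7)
The plan is to reduce the claim to the single inclusion $\Sigma_{\mathsf{H}} \subseteq \underset{\theta}{\leq}$: once this is available, for any $(a,b) \in \Sigma_{\mathsf{H}} \cap \Sigma_{\mathsf{H}}^{-1}$ one obtains $a \underset{\theta}{\leq} b$ and $b \underset{\theta}{\leq} a$, and the closed chain condition (\ref{ccc}) for the order-congruence $\theta$ immediately yields $a\,\theta\,b$, which is exactly what we want.

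To prove the inclusion, I would take an arbitrary pair $(c, c') \in \Sigma_{\mathsf{H}}$ and apply Lemma \ref{Induced}. If $c \leq c'$ outright, one gets $c \underset{\theta}{\leq} c'$ for free, since the definition of $\underset{\theta}{\leq}$ with $n = 1$ and $a_1 = b_1 = c$ (using reflexivity of $\theta$) shows $\leq \;\subseteq\; \underset{\theta}{\leq}$. Otherwise Lemma \ref{Induced} provides a scheme
\begin{equation*}
c \leq p_1(a_1) \overset{\mathsf{H}}{\longrightarrow} p_1(a_1') \leq \cdots \leq p_n(a_n) \overset{\mathsf{H}}{\longrightarrow} p_n(a_n') \leq c'.
\end{equation*}
By the hypothesis on $\mathsf{H}$, every pair $(a_i, a_i') \in \mathsf{H}$ already satisfies $a_i \underset{\theta}{\leq} a_i'$. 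The step that does the real work is then to push this through the translations: because $\underset{\theta}{\leq}$ is a compatible quasiorder on $\mathbf{A}$, compatibility with the basic operations propagates inductively to any $A \cup \mathcal{F}_0^{\mathbf{A}}$-cfr translation $p_i$ (using reflexivity of $\underset{\theta}{\leq}$ on the coordinates that are held fixed), yielding $p_i(a_i) \underset{\theta}{\leq} p_i(a_i')$. Chaining the scheme via transitivity of $\underset{\theta}{\leq}$ then gives $c \underset{\theta}{\leq} c'$.

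The only point needing care is the verification that translations preserve $\underset{\theta}{\leq}$, but this is essentially routine: $\underset{\theta}{\leq}$ is, by construction, a compatible quasiorder on $\mathbf{A}$, so compatibility with each $f \in \mathcal{F}^{\mathbf{A}}$ extends inductively to every term, and in particular to the substitution patterns defining translations. With that in hand, combining $\Sigma_{\mathsf{H}} \subseteq \underset{\theta}{\leq}$ with the analogous fact for $\Sigma_{\mathsf{H}}^{-1}$ and invoking (\ref{ccc}) closes the argument.
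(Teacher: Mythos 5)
Your proof is correct and follows exactly the route the paper intends: the paper gives no written proof, remarking only that the proposition ``follows easily from Lemma \ref{Induced}.'' Your fleshed-out version --- noting $\leq\;\subseteq\;\underset{\theta}{\leq}$, pushing $a_i\underset{\theta}{\leq}a_i'$ through the cfr translations via compatibility and reflexivity of the quasiorder $\underset{\theta}{\leq}$, chaining by transitivity to get $\Sigma_{\mathsf{H}}\subseteq\;\underset{\theta}{\leq}$, and finishing with the closed chain condition (\ref{ccc}) --- is precisely that argument, with no gaps.
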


Given $\mathbf{A}=(A, \mathcal{F}^{\mathbf{A}}, \leq)$ and $\mathsf{H}%
\subseteq A^{2}$ one can also consider the order-congruence $\Theta _{%
\mathsf{H}}$ on $\mathbf{A}$ generated by $\mathsf{H}$. The following lemma
gives a practical description of $\underset{\Theta _{\mathsf{H}}}{\leq }$.

\begin{lemma}[Cf. Lemma 1.2 of \protect\cite{Valdis NS Lauri 1}]
\label{Generated}Let $\mathbf{A}=(A, \mathcal{F}^{\mathbf{A}}, \leq)$
be an ordered algebra and $\mathsf{H}\subseteq A^{2}$. For $%
c,\,c^{\prime}\in A$, we have $c\underset{\Theta _{\mathsf{H}}}\leq
c^{\prime}$ iff either $c\,\leq\, c^{\prime}$ or there exists a scheme 
\begin{equation*}
c\leq p_{1}(a_{1})\overset{\mathsf{H}\cup \mathsf{H}^{-1}}{\longrightarrow }%
p_{1}(a_{1}^{\prime }) \leq p_{2}(a_{2})\overset{\mathsf{H}\cup \mathsf{H}%
^{-1}}{\longrightarrow }p_{2}(a_{2}^{\prime }) \leq \cdots \leq p_{n}(a_{n})%
\overset{\mathsf{H}\cup \mathsf{H}^{-1}}{\longrightarrow }%
p_{n}(a_{n}^{\prime }) \leq c^{\prime } \text{,}
\end{equation*}
where $\mathsf{H}^{-1}$ is the inverse relation of $\mathsf{H}$.
\end{lemma}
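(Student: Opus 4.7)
The plan is to reduce the lemma to Lemma \ref{Induced} by first proving the equality
\[
\underset{\Theta_{\mathsf{H}}}{\leq} \;=\; \Sigma_{\mathsf{H}\cup \mathsf{H}^{-1}},
\]
after which the stated scheme is exactly the description provided by Lemma \ref{Induced} with $\mathsf{H}$ replaced by $\mathsf{H}\cup \mathsf{H}^{-1}$: a $\leq$-step alternates with a $\left(\mathsf{H}\cup \mathsf{H}^{-1}\right)$-translation step, and that is precisely the scheme that appears in the statement.

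For the inclusion $\Sigma_{\mathsf{H}\cup \mathsf{H}^{-1}}\subseteq\, \underset{\Theta_{\mathsf{H}}}{\leq}$, I would first record the general fact that for any congruence $\theta$ one has $\theta\subseteq \underset{\theta}{\leq}$, witnessed by the one-step chain $a\leq a\,\theta\, b\leq b$. Since $\Theta_{\mathsf{H}}$ is a congruence (hence symmetric), both $\mathsf{H}$ and $\mathsf{H}^{-1}$ are contained in $\Theta_{\mathsf{H}}\subseteq \underset{\Theta_{\mathsf{H}}}{\leq}$. Because $\underset{\Theta_{\mathsf{H}}}{\leq}$ is a compatible quasiorder on $\mathbf{A}$ and $\Sigma_{\mathsf{H}\cup \mathsf{H}^{-1}}$ is by definition the smallest compatible quasiorder containing $\mathsf{H}\cup \mathsf{H}^{-1}$, the desired containment follows.

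For the opposite inclusion, I would set $\tau = \Sigma_{\mathsf{H}\cup \mathsf{H}^{-1}}\cap \Sigma_{\mathsf{H}\cup \mathsf{H}^{-1}}^{-1}$. Using the fact (already recorded in the preliminaries) that the intersection of a compatible quasiorder with its inverse is an order-congruence, $\tau$ is an order-congruence of $\mathbf{A}$. Since $\mathsf{H}\cup \mathsf{H}^{-1}$ is symmetric, it lies in both $\Sigma_{\mathsf{H}\cup \mathsf{H}^{-1}}$ and its inverse, so $\mathsf{H}\subseteq \tau$; minimality of $\Theta_{\mathsf{H}}$ then forces $\Theta_{\mathsf{H}}\subseteq \tau$. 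Now any chain $c\leq a_{1}\,\Theta_{\mathsf{H}}\,b_{1}\leq\cdots\leq a_{n}\,\Theta_{\mathsf{H}}\,b_{n}\leq c'$ witnessing $c\underset{\Theta_{\mathsf{H}}}{\leq}c'$ becomes a chain whose $\leq$-steps and $\Theta_{\mathsf{H}}$-steps all lie in the quasiorder $\Sigma_{\mathsf{H}\cup \mathsf{H}^{-1}}$, and transitivity of the latter yields $c\,\Sigma_{\mathsf{H}\cup \mathsf{H}^{-1}}\,c'$.

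Once the equality is established, the scheme characterization is immediate from Lemma \ref{Induced} applied to $\mathsf{H}\cup \mathsf{H}^{-1}$. I do not anticipate any genuine obstacle: the only delicate bookkeeping is to be explicit that ``order-congruence generated by $\mathsf{H}$'' means the smallest order-congruence containing $\mathsf{H}$ as a subset of $A^{2}$ (which is what licenses the step $\Theta_{\mathsf{H}}\subseteq \tau$ above), and to use that $\underset{\theta}{\leq}$ is itself a compatible quasiorder so that the minimality argument comparing it with $\Sigma_{\mathsf{H}\cup \mathsf{H}^{-1}}$ is legitimate. Beyond these small points, the proof is a short diagram chase between the two lattices of compatible quasiorders and order-congruences on $\mathbf{A}$.
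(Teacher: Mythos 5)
Your argument is correct, but it is not a reconstruction of anything in the paper: the paper gives no internal proof of Lemma \ref{Generated} at all, it simply imports it as Lemma 1.2 of \cite{Valdis NS Lauri 1} (just as Lemma \ref{Induced} is imported from \cite{Valdis Lauri Nasir 2}). What you do instead is derive it from Lemma \ref{Induced} by establishing the identity $\underset{\Theta _{\mathsf{H}}}{\leq }\,=\,\Sigma _{\mathsf{H}\cup \mathsf{H}^{-1}}$, and both inclusions are sound with the tools the paper records in Section \ref{SecPrelim}: for $\Sigma _{\mathsf{H}\cup \mathsf{H}^{-1}}\subseteq \,\underset{\Theta _{\mathsf{H}}}{\leq }$ you use that $\mathsf{H}\cup \mathsf{H}^{-1}\subseteq \Theta _{\mathsf{H}}\subseteq \,\underset{\Theta _{\mathsf{H}}}{\leq }$ (the one-step chain $a\leq a\,\theta \,b\leq b$) and that $\underset{\theta }{\leq }$ is a compatible quasiorder, so minimality of the generated quasiorder applies; for the converse you use that $\tau =\Sigma _{\mathsf{H}\cup \mathsf{H}^{-1}}\cap \Sigma _{\mathsf{H}\cup \mathsf{H}^{-1}}^{-1}$ is an order-congruence containing $\mathsf{H}$ (here the symmetry of $\mathsf{H}\cup \mathsf{H}^{-1}$ and the stated fact that $\sigma \cap \sigma ^{-1}$ is an order-congruence are exactly what is needed), so $\Theta _{\mathsf{H}}\subseteq \tau$, and then every chain witnessing $c\,\underset{\Theta _{\mathsf{H}}}{\leq }\,c^{\prime }$ collapses into $\Sigma _{\mathsf{H}\cup \mathsf{H}^{-1}}$ by transitivity, since compatible quasiorders extend $\leq$ by definition. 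Applying Lemma \ref{Induced} to the relation $\mathsf{H}\cup \mathsf{H}^{-1}$ then yields precisely the scheme in the statement, because the translation apparatus $\overset{\mathsf{K}}{\longrightarrow }$ is unchanged when $\mathsf{K}=\mathsf{H}\cup \mathsf{H}^{-1}$. The trade-off is clear: the paper's citation is shorter and leans on the external source, whereas your route makes the paper self-contained modulo Lemma \ref{Induced} and makes explicit the quasiorder/order-congruence correspondence ($\underset{\Theta _{\mathsf{H}}}{\leq }$ equals the compatible quasiorder generated by the symmetrization of $\mathsf{H}$), which is in effect the actual content of the cited Lemma 1.2; the only hygiene points, which you already flag, are that ``generated'' means smallest containing (intersections of order-congruences do satisfy the closed chain condition, so $\Theta _{\mathsf{H}}$ exists) and that minimality is invoked only against objects verified to be compatible quasiorders or order-congruences.
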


\begin{remark}
\upshape\label{reflexive}If $\mathsf{H}$ is reflexive then the inequality $%
a\leq b$ gives rise to the scheme $a\leq b\overset{\mathsf{H}}{%
\longrightarrow }b\leq b$, whence $a\Sigma _{H}b$.
\end{remark}

Consider next the ordered algebra $\overline{\mathbf{T}(X)}=(T(X),\,\mathcal{%
F}^{\mathbf{T}(X)}\,,=)$. Clearly $\leq ~=~\leq _{X}\dot{\cup}\leq _{%
\mathcal{F}_{0}}$ is a relation on $T(X)\times T(X)$.

\begin{definition}
\label{Ordered term algebra}\upshape We call the quotient $\widehat{\mathbf{T}%
(X)}= \overline{\mathbf{T}(X)}/\Sigma _{\leq }$ the \textit{ordered term algebra} of
type $\mathcal{F}$ over $(X,\leq_{X}),$ where $\Sigma _{\leq }$
is the compatible quasiorder on $\overline{\mathbf{T}(X)}$ generated by $\leq $.

\end{definition}

In the following theorem we prove that $\Sigma _{\leq }$ is actually an order on $T(X)$. 
Notations introduced in Definition \ref{Term<>} will be instrumental in the proof. 

\begin{definition} \label{Term<>}
\upshape Let $t(z_{1},\ldots ,z_{k})\in T_{\mathcal{F}}^{reg}(V)$ be constant free such that $Tree(t)$ has $k$ leaves and 
$var(t)=(z_{1},\ldots ,z_{k})$. For $a_{1},\ldots ,a_{k}\in V\cup \mathcal{F}%
_{0}$ and $1\leq j\leq k$, we define%
\begin{align*}
t(a_{1},\ldots ,a_{j},\ldots ,a_{k})\left[ j\right] & =a_{j} \\
t(a_{1},\ldots ,a_{j},\ldots ,a_{k})\left\langle l\right\rangle &
=(a_{1},\ldots ,a_{l})\text{; \ \ }l\leq k \\
t(a_{1},\ldots ,a_{j},\ldots ,a_{k})[j,u]& =t(a_{1},\ldots
,a_{j-1},u,a_{j+1},\ldots ,a_{k}).
\end{align*}
\end{definition}

\newpage
\begin{theorem}
$\Sigma _{\leq }\cap \Sigma _{\leq }^{-1}=\triangle _{T(X)}.$
\end{theorem}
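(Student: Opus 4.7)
The plan is to use Lemma \ref{Induced} to unpack $\Sigma_{\leq}$ and then track how the tree of a term evolves along a generating scheme. Since the order on $\overline{\mathbf{T}(X)}$ is the identity relation, Lemma \ref{Induced} reads: $r\,\Sigma_\leq\,s$ iff either $r=s$ or there is a scheme
\[
r \;=\; p_1(a_1), \quad a_1\leq a_1',\quad p_1(a_1') = p_2(a_2), \quad a_2\leq a_2',\quad \ldots,\quad p_n(a_n') \;=\; s,
\]
where each $p_i$ is an $X\cup\mathcal{F}_0$-cfr translation and $a_i,a_i'\in X\cup\mathcal{F}_0$ with $a_i\leq a_i'$. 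Since $\leq=\leq_X\dot{\cup}\leq_{\mathcal{F}_0}$, each pair $(a_i,a_i')$ is necessarily contained in a single component, with $a_i\leq_X a_i'$ or $a_i\leq_{\mathcal{F}_0}a_i'$.

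The key structural claim, which I would prove by induction on the scheme length $n$, is: if $r\,\Sigma_\leq\,s$, then $skelt(Tree(r))=skelt(Tree(s))$, and, writing this common skeleton as having leaves $1,\ldots,N$, one has $leaf(r)(j)\leq leaf(s)(j)$ at every position $j$ (so in particular the two labels lie in the same set among $X$ and $\mathcal{F}_0$). Each step $p_i(a_i)\to p_i(a_i')$ plainly preserves the skeleton and changes only the leaf that corresponds to the variable slot of $p_i$, replacing $a_i$ by the $\leq$-larger element $a_i'$. For the identity transition $p_i(a_i')=p_{i+1}(a_{i+1})$, write $p_i(u)=t_i(\ldots,u,\ldots)$ and $p_{i+1}(u)=t_{i+1}(\ldots,u,\ldots)$ with $t_i,t_{i+1}$ constant-free regular. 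Lemma \ref{RW}, applied to this equality of terms with all arguments in $X\cup\mathcal{F}_0$, forces $t_i=t_{i+1}$ and a pointwise equality of the argument tuples, so both the skeleton and all leaf labels are preserved across the transition. Following a fixed leaf position along the whole scheme then yields a chain of labels inside a single component, each dominating the previous one in the appropriate partial order.

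With the claim in hand, the theorem is immediate. The inclusion $\triangle_{T(X)}\subseteq\Sigma_\leq\cap\Sigma_\leq^{-1}$ is clear from reflexivity of $\Sigma_\leq$. For the converse, if $r\,\Sigma_\leq\,s$ and $s\,\Sigma_\leq\,r$, applying the claim in both directions gives $skelt(Tree(r))=skelt(Tree(s))$ together with $leaf(r)(j)\leq leaf(s)(j)$ and $leaf(s)(j)\leq leaf(r)(j)$ at every leaf position $j$, each pair lying in a common component. Antisymmetry of $\leq_X$ and $\leq_{\mathcal{F}_0}$ gives $leaf(r)(j)=leaf(s)(j)$ for every $j$, and since a term is determined by its tree (skeleton plus leaf labels) this forces $r=s$.

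The main obstacle is the bookkeeping at the identity transitions $p_i(a_i')=p_{i+1}(a_{i+1})$, where a priori the variable slots and the underlying regular terms $t_i,t_{i+1}$ need not be related. Lemma \ref{RW} is exactly the tool that rules out any such mismatch, because all substituted arguments live in $X\cup\mathcal{F}_0$. The notation introduced in Definition \ref{Term<>} is convenient for denoting the leaf being modified at each step and for expressing the chain of label updates at a fixed leaf position throughout the scheme.
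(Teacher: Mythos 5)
Your proposal is correct and follows essentially the same route as the paper: both unpack $\Sigma_{\leq}$ via Lemma \ref{Induced} into schemes of single-leaf replacements, use Lemma \ref{RW} for the bookkeeping of the underlying regular terms, and conclude by leafwise antisymmetry of $\leq_X$ and $\leq_{\mathcal{F}_0}$. Your intermediate structural claim (common skeleton together with leafwise domination) is exactly the characterization of $\preccurlyeq$ that the paper records immediately after its proof, so the two arguments differ only in organization.
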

\begin{proof}
Let $\left( f,g\right) \in \Sigma _{\leq }\cap \Sigma _{\leq }^{-1}$.
Then, because $\leq $ is reflexive, we have, by Lemma \ref{Induced} and
Remark \ref{reflexive}, the following possibilities.\medskip

\noindent (i) $f=g$, in which case $\left( f,g\right) \in \triangle _{T(X)}$%
.\smallskip

\noindent (ii) There exist schemes%
\begin{equation}
f=p_{1}(y_{1})\overset{\leq }{\longrightarrow }p_{1}(y_{1}^{\prime
})=p_{2}(y_{2})\overset{\leq }{\longrightarrow }p_{2}(y_{2}^{\prime
})=\cdots =p_{n}(y_{n})\overset{\leq }{\longrightarrow }p_{n}(y_{n}^{\prime
})=g,  \label{f<g}
\end{equation}%
\noindent and 
\begin{equation}
g=p_{n+1}(y_{n+1})\overset{\leq }{\longrightarrow }p_{n+1}(y_{n+1}^{\prime
})=\cdots =p_{m}(y_{m})\overset{\leq }{\longrightarrow }p_{m}(y_{m}^{\prime
})=f\text{,}  \label{g<f}
\end{equation}%
 where $p_{i}$ are $X\cup \mathcal{F}%
_{0} $ - cfr translations and $(y_{i},y_{i}^{\prime })\in \,\leq $ for $%
i=1,\dots ,m$. One may also assume by Lemma \ref{RW} that the $X\cup 
\mathcal{F}_{0}$ - cfr translations $p_{i}$, $1\leq i\leq m$, are all induced by
the same constant free regular term, say $t(z_{1},\dots ,z_{k})$.
Using Definition \ref{Term<>}, we can rewrite (\ref{f<g}) and (\ref{g<f}) as
\begin{equation}
\begin{tabular}{lll}
$f=t(a_{1},\dots ,a_{k})$ &  &  \\ 
$\overset{\leq }{\longrightarrow }t(a_{1},\dots ,a_{k})[k_{1},y_{1}^{\prime}]$ & ; & 
$t(a_{1},\dots ,a_{k}) [k_{1}] =y_{1}$ \\ 
$\overset{\leq }{\longrightarrow }t(a_{1},\dots ,a_{k})[k_{1},y_{1}^{\prime
}][k_{2},y_{2}^{\prime }]$ & ; & $t(a_{1},\dots ,a_{k})[k_{1},y_{1}^{\prime
}] [k_{2}] =y_{2}$ \\ 
$\cdots $ & ; &  \\ 
$\overset{\leq }{\longrightarrow }t(a_{1},\dots ,a_{k})[k_{1},y_{1}^{\prime
}]\ldots \lbrack k_{n},y_{n}^{\prime }]=g$ & ; & $t(a_{1},\dots
,a_{k})[k_{1},y_{1}^{\prime }]\ldots \lbrack k_{n-1},y_{n-1}^{\prime
}] [k_{n}] =y_{n}$ \\ 
$\cdots $ &  &  \\ 
$\overset{\leq }{\longrightarrow }t(a_{1},\dots ,a_{k})[k_{1},y_{1}^{\prime
}]\ldots \lbrack k_{m},y_{m}^{\prime }]=f$ & ; & $t(a_{1},\dots
,a_{k})[k_{1},y_{1}^{\prime }]\ldots \lbrack k_{m-1},y_{m-1}^{\prime
}] [k_{m}] =y_{m}\text{.}$%
\end{tabular}
\label{f==g}
\end{equation}
But then, by Lemma \ref{RW}, we must have
\begin{equation}
(a_{1},\dots ,a_{k})=t(a_{1},\dots ,a_{k})[k_{1},y_{1}^{\prime }]\ldots
\lbrack k_{m},y_{m}^{\prime }]\left\langle k\right\rangle
\end{equation}%
This implies that every occurence of $\overset{\leq }{\longrightarrow }$ in (%
\ref{f==g}) may be replaced by $=$, whence we get $f=g$. This also implies that
$\Sigma _{\leq }$ is indeed an order on $T(X)$. 
\end{proof}

We can therefore identify 
$\widehat{\mathbf{T}(X)}=\overline{\mathbf{T}(X)}/\Sigma _{\leq }$ with $(T(X),
\mathcal{F}^{\mathbf{T}(X)},\Sigma _{\leq })$. 
For practical reasons we shall use $\preccurlyeq $ instead of $\Sigma _{\leq }$ to
denote the order of $\widehat{\mathbf{T}(X)}$.
Also, note that for $t_{1},t_{2}\in T(X)$ we have
\begin{equation*}
t_{1}\preccurlyeq t_{2}\quad \mbox{iff}\quad skelt(t_{1})=skelt(t_{2})\,\,
\mbox{and}\,\,leaf(t_{1})[i]\leq leaf(t_{2})[i],
\end{equation*}
where $i=1,\ldots ,n$ and $n=\mbox{the number of leaves of}\,\,t_{1}=
\mbox{the number of leaves of}\,\,t_{2}$.

The following theorem shows that the ordered term algebra $\widehat{\mathbf{T}(X)}$ 
has the universal mapping property for $\mathcal{F}$-\textbf{Oalg}$_{\leq }^{0}$ over $X$.

\begin{theorem}[Cf. Theorem 10.8 of \protect\cite{Burris}]
\label{Universal} Let $\mathcal{F}$ be a type and $(X, \leq_{X} )$ a poset such
that $X\cup \mathcal{F}_{0}\not=\emptyset$. For every ordered algebra $%
\mathbf{D}\in\mathcal{F}$-\textbf{Oalg}$^0_{\leq}$ and every monotone
mapping $\alpha :\left( X,\leq_{X} \right) \longrightarrow (D,\leq _{D})$ there
is a unique homomorphism $\beta :\widehat{\mathbf{T}(X)}\longrightarrow 
\mathbf{D}$, such that the diagram in Figure \ref{universal} commutes.
\begin{figure}[h]
\begin{equation*}
\xymatrix{ X \ar@{_{(}->}[d] \ar[r]^{\alpha} & \mathbf{D} \\
\widehat{\mathbf{T}(X)} \ar[ur]_{\beta} & }
\end{equation*}%
\caption{$\widehat{\mathbf{T}(X)}$ has the universal mapping property for 
$\mathcal{F}$-\textbf{Oalg}$_{\leq }^{0}$ over $X$}
\label{universal}
\end{figure}
\end{theorem}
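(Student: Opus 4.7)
The plan is to define $\beta$ on $T(X)$ by induction on term formation: set $\beta(x)=\alpha(x)$ for $x\in X$, $\beta(c)=c^{\mathbf{D}}$ for $c\in\mathcal{F}_{0}$, and $\beta(f(t_{1},\ldots,t_{k}))=f^{\mathbf{D}}(\beta(t_{1}),\ldots,\beta(t_{k}))$ for $f\in\mathcal{F}_{k}$. Uniqueness is forced: any extension of $\alpha$ that is an ordered-algebra homomorphism must satisfy these three equations, and these equations determine $\beta$ on every term. That $\beta$ is a homomorphism of the underlying unordered $\mathcal{F}$-algebras is the standard universal mapping property of the (unordered) term algebra recalled in Theorem 10.8 of \cite{Burris}; the equation for operations holds by the very definition of $\beta$ and of $f^{\mathbf{T}(X)}$ in (\ref{talgop}). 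Moreover, since the preceding theorem gives $\Sigma_{\leq}\cap\Sigma_{\leq}^{-1}=\triangle_{T(X)}$, the universe of $\widehat{\mathbf{T}(X)}$ is literally $T(X)$ and no extra quotient identification has to be checked.

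The substantive step is monotonicity of $\beta$ with respect to $\preccurlyeq$. I will use the characterization of $\preccurlyeq$ recorded just before the theorem, namely that $t_{1}\preccurlyeq t_{2}$ holds iff $skelt(t_{1})=skelt(t_{2})$ and $leaf(t_{1})(i)\leq leaf(t_{2})(i)$ for every leaf index $i$, where $\leq$ on the leaves is $\leq_{X}\,\dot{\cup}\,\leq_{\mathcal{F}_{0}}$. Proceed by induction on $o(t_{1})$ (equivalently $o(t_{2})$, since the skeletons coincide). If $o(t_{1})=0$, both $t_{1}$ and $t_{2}$ are leaves, and the compatibility of the disjoint union forces them to lie in the same component: either $t_{1},t_{2}\in X$ with $t_{1}\leq_{X}t_{2}$, in which case $\beta(t_{1})=\alpha(t_{1})\leq_{D}\alpha(t_{2})=\beta(t_{2})$ by monotonicity of $\alpha$; or $t_{1},t_{2}\in\mathcal{F}_{0}$ with $t_{1}\leq_{\mathcal{F}_{0}}t_{2}$, in which case $\beta(t_{1})=t_{1}^{\mathbf{D}}\leq_{D}t_{2}^{\mathbf{D}}=\beta(t_{2})$ because $\mathbf{D}\in\mathcal{F}\textbf{-Oalg}_{\leq}^{0}$ satisfies the defining inequalities between constants.

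For the inductive step, the coincidence of skeletons forces $t_{1}=f(s_{1},\ldots,s_{k})$ and $t_{2}=f(r_{1},\ldots,r_{k})$ with the same $f\in\mathcal{F}_{k}$, and the leaf condition restricts to $s_{j}\preccurlyeq r_{j}$ for each $j$ (the leaves of each $s_{j}$ and $r_{j}$ form matching contiguous blocks under the same subskeletons). By the induction hypothesis $\beta(s_{j})\leq_{D}\beta(r_{j})$ for $1\leq j\leq k$, and then monotonicity of $f^{\mathbf{D}}$ in $\mathbf{D}$ yields
\begin{equation*}
\beta(t_{1})=f^{\mathbf{D}}(\beta(s_{1}),\ldots,\beta(s_{k}))\leq_{D}f^{\mathbf{D}}(\beta(r_{1}),\ldots,\beta(r_{k}))=\beta(t_{2}),
\end{equation*}
which closes the induction.

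The main (and essentially only) obstacle is the constants case of the base step: one must know that the partial order $\leq_{\mathcal{F}_{0}}$ used to build $\widehat{\mathbf{T}(X)}$ is precisely the one whose inequalities define the variety $\mathcal{F}\textbf{-Oalg}_{\leq}^{0}$, so that each pair $c\leq_{\mathcal{F}_{0}}d$ is actually enforced in every $\mathbf{D}$ of the category. With that convention in place (which is implicit in the notation $\mathcal{F}\textbf{-Oalg}_{\leq}^{0}$ and in the hypothesis that $\mathcal{F}_{0}$ is equipped with $\leq_{\mathcal{F}_{0}}$), everything else reduces to the standard term-algebra argument combined with the leaf-level description of $\preccurlyeq$.
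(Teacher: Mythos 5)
Your proposal is correct and is precisely the ``straightforward adaptation'' of Theorem 10.8 of \cite{Burris} that the paper invokes without giving details: you define $\beta$ by structural recursion (which also forces uniqueness and the homomorphism identities) and verify monotonicity by induction using the leaf-wise description of $\preccurlyeq$ recorded just before the theorem, with the constants case covered because $\mathbf{D}$ satisfies the defining inequalities $c\leq d$ of $\mathcal{F}$-\textbf{Oalg}$_{\leq }^{0}$ coming from $\leq_{\mathcal{F}_{0}}$. Nothing further is needed.
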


\begin{proof}
A straightforward adaptation to the ordered context of Theorem 10.8 in \cite{Burris}.
\end{proof}

\section{Amalgamation in $\mathcal{F}$-\textbf{Oalg}$_{\leq }^{0}$} \label{SecAmal}

The idea of the dominion of a subalgebra $\mathbf{B}$ of an algebra $\mathbf{A}$ 
goes back to Isbell \cite{Isbell}. In this section we consider, in the
context of ordered algebras, an order theoretic analogue of the relation between dominions and
 the special amalgamation property.

\begin{definition}
\upshape Let $\mathcal{V}$ be a variety of ordered $\mathcal{F}$-algebras. 
An \textit{amalgam in }$\mathcal{V}$ is a $\mathcal{V}$-diagram given in Figure \ref{4},
\begin{figure}[h]
\begin{equation*}
\xymatrix{
\mathbf{C} \ar[d]_{\phi_2}  \ar[r]^{\phi_1}  & \mathbf{A}    \\
\mathbf{B}                                 &           }
\end{equation*}
\captionof{figure}{Amalgam in $\mathcal{V}$} \label{4}
\end{figure}
such that $\mathbf{A}, \mathbf{B}, \mathbf{C} \in \mathcal{V}$ are pairwise disjoint, 
and $\phi _{1}$, $\phi _{2}\in Hom(\mathcal{V})$, are order-embeddings.
\end{definition}
We denote an amalgam by $(\mathbf{C};\mathbf{A},\mathbf{B}
;\phi _{1},\phi _{2})$ or by an even shorter list $(\mathbf{C};\mathbf{A},
\mathbf{B)}$ if no explicit mention of $\phi _{1}$ and $\phi _{2}$ was
required. If $\mathcal{V}=\mathcal{F}$-\textbf{Oalg}$_{\leq }^{0}$, then we show that
the diagram in Figure \ref{4} can be completed to a pushout (see \cite{MacLane} for definition). 

Because $A\cap
B=\emptyset$, we can consider the poset $(X,\leq _{X})$ where $X=A\dot{\cup}
B$ and $\leq _{X}~=~\leq _{A}\dot{\cup}\leq _{B}$. We first define the
following relations on $T(X)$,
\begin{align*}
\mathsf{R}_{A}& =\left\{ \left( t(x_{1},\ldots ,x_{k}),t^{\mathbf{A}
}(x_{1},\ldots ,x_{k})\right) \,|\,t(x_{1},\ldots ,x_{k})\in T(A)\right\} 
\text{,} \\
\mathsf{R}_{B}& =\left\{ \left( t(y_{1},\ldots ,y_{l}),t^{\mathbf{B}
}(y_{1},\ldots ,y_{l})\right) \,|\,t(y_{1},\ldots ,y_{l})\in T(B)\right\} 
\text{,} \\
\mathsf{H}^{\prime }& =\left\{ (\phi _{1}(c),\phi _{2}(c)\,|\,c\in C\right\} 
\text{,}
\end{align*}%
where $t^{\mathbf{A}}$, $t^{\mathbf{B}}$ are the term functions induced by
the term $t$ on $\mathbf{A}$, $\mathbf{B}$ respectively.  We let
\begin{eqnarray*}
\mathsf{H} &=&\mathsf{R}_{A}\dot{\cup}\mathsf{R}_{B} \\
\widehat{\mathsf{H}} &=&\mathsf{H}\cup \mathsf{H}^{\prime }\text{.}
\end{eqnarray*}

\begin{theorem}
\label{Pushout 1}If $\mathcal{V}=\mathcal{F}$-\textbf{Oalg}$_{\leq }^{0}$,
then the diagram in Figure \ref{4}  can be completed to a pushout.
\end{theorem}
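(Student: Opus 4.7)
The plan is to construct the pushout explicitly as a quotient of the ordered term algebra $\widehat{\mathbf{T}(X)}$, where $X=A\dot{\cup}B$ is endowed with the disjoint-union order $\leq_X\,=\,\leq_A\dot{\cup}\leq_B$. I will let $\Theta$ denote the order-congruence on $\widehat{\mathbf{T}(X)}$ generated by $\widehat{\mathsf{H}}$, and set $\mathbf{D}:=\widehat{\mathbf{T}(X)}/\Theta$, the regular quotient. Since $\mathcal{F}$-\textbf{Oalg}$_{\leq}^{0}$ is a variety and the order $\leq_{\mathcal{F}_0}$ (which is baked into the order on $\widehat{\mathbf{T}(X)}$) captures the defining constant inequalities, $\mathbf{D}$ will lie in $\mathcal{V}$. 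The cocone maps will be the natural ones, $\psi_1\colon\mathbf{A}\to\mathbf{D}$, $a\mapsto[a]_\Theta$, and $\psi_2\colon\mathbf{B}\to\mathbf{D}$, $b\mapsto[b]_\Theta$, viewing each $a\in A$ and $b\in B$ as a depth-$0$ term in $T(X)$.

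The first layer of verifications is routine. Monotonicity of $\psi_1,\psi_2$ follows from $\leq_A,\leq_B\,\subseteq\,\leq_X\,\subseteq\,\preccurlyeq_{\widehat{\mathbf{T}(X)}}$ together with the monotonicity of the quotient map. Preservation of operations is exactly what the relation $\mathsf{R}_A\subseteq\Theta$ is designed to enforce: for $f\in\mathcal{F}_k$ and $a_1,\ldots,a_k\in A$, the generator $(f(a_1,\ldots,a_k),f^{\mathbf{A}}(a_1,\ldots,a_k))\in\mathsf{R}_A$ forces $f^{\mathbf{D}}(\psi_1(a_1),\ldots,\psi_1(a_k))=\psi_1(f^{\mathbf{A}}(a_1,\ldots,a_k))$ in $\mathbf{D}$, and analogously via $\mathsf{R}_B$ for $\psi_2$. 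The commutativity $\psi_1\phi_1=\psi_2\phi_2$ is then immediate from $(\phi_1(c),\phi_2(c))\in\mathsf{H}'\subseteq\Theta$.

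The main technical step is the universal property. Given any $\mathbf{E}\in\mathcal{V}$ and homomorphisms $\alpha_1\colon\mathbf{A}\to\mathbf{E}$, $\alpha_2\colon\mathbf{B}\to\mathbf{E}$ with $\alpha_1\phi_1=\alpha_2\phi_2$, I would assemble $\alpha_1,\alpha_2$ into a single monotone map $\alpha\colon(X,\leq_X)\to(E,\leq_E)$ (monotonicity is free because $\leq_X$ is a disjoint union) and apply Theorem~\ref{Universal} to obtain a unique homomorphism $\beta\colon\widehat{\mathbf{T}(X)}\to\mathbf{E}$ extending $\alpha$. The heart of the argument, and where I expect the only real work, is showing that $\beta$ descends to $\mathbf{D}$, i.e.\ that $\underset{\Theta}{\leq}\,\subseteq\,\overset{\longrightarrow}{\ker}\beta$. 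By Lemma~\ref{Generated} this reduces to verifying $\beta(u)\leq\beta(v)$ for every generating pair $(u,v)\in\widehat{\mathsf{H}}\cup\widehat{\mathsf{H}}^{-1}$; in fact one gets equality on each family, namely $\beta(t)=t^{\mathbf{E}}(\alpha_1(x_1),\ldots)=\alpha_1(t^{\mathbf{A}}(x_1,\ldots))=\beta(t^{\mathbf{A}})$ for pairs in $\mathsf{R}_A$, analogously for $\mathsf{R}_B$ using $\alpha_2$, and $\beta(\phi_1(c))=\alpha_1\phi_1(c)=\alpha_2\phi_2(c)=\beta(\phi_2(c))$ for pairs in $\mathsf{H}'$. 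Theorem~\ref{Homomorphism} then produces the required $\gamma\colon\mathbf{D}\to\mathbf{E}$ satisfying $\gamma\psi_1=\alpha_1$ and $\gamma\psi_2=\alpha_2$; its uniqueness is automatic because $\psi_1(A)\cup\psi_2(B)$ generates $\mathbf{D}$ as an $\mathcal{F}$-algebra.
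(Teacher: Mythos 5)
Your proposal is correct and follows essentially the same route as the paper: form the order-congruence on $\widehat{\mathbf{T}(X)}$ generated by $\widehat{\mathsf{H}}$, take the regular quotient as the pushout object with the induced maps on $A$ and $B$, and establish the universal property by combining Theorem \ref{Universal}, the verification via Lemma \ref{Generated} that $\beta$ identifies all generating pairs so that $\underset{\Theta}{\leq}\,\subseteq\,\overset{\longrightarrow}{\ker}\beta$, and Theorem \ref{Homomorphism} to factor through the quotient. The only cosmetic differences are notational (your $\Theta,\psi_i,\gamma$ versus the paper's $\Phi,\mu_i,\delta$) and your explicit remark on uniqueness of the induced morphism, which the paper leaves implicit.
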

\begin{proof}
Let $\Phi $ be the order-congruence on $\widehat{\mathbf{T}(X)}$ generated
by the relation $\widehat{\mathsf{H}}$. We denote the quotient algebra $
\widehat{\mathbf{T}(X)}/{\Phi }=(T(X)/\Phi ,\mathcal{F}^{\mathbf{T}(X)/\Phi
},\preccurlyeq _{\mathbf{T}(X)/\Phi })$ by $\mathbf{A}\amalg _{\mathbf{C}}
\mathbf{B}$, where we have for $s,t\in T(X)$ 
\begin{equation*}
\lbrack s]_{\Phi }\,\preccurlyeq _{\mathbf{T}(X)/\Phi }\,[t]_{\Phi }\,\,
\mbox{iff}\,\,s\,\underset{\Phi }{\preccurlyeq }\,t.
\end{equation*}%
Let $\chi _{A}:(A, \leq_{A})\longrightarrow (T(X), \preccurlyeq)$ and $\chi _{B}: (B, \leq_{B})\longrightarrow (T(X), \preccurlyeq)$
be the order-embeddings that identify elements of $A$ and $B$ with
their terms in $T(X)$. Also, let $\Phi ^{\natural }:\widehat{\mathbf{T}(X)}%
\longrightarrow \widehat{\mathbf{T}(X)}/{\Phi }$ be the canonical
homomorphism. We define,%
\begin{equation*}
\mu _{1}=\Phi ^{\natural }\circ \chi _{A}:\mathbf{A}\longrightarrow \mathbf{A%
}\amalg _{\mathbf{C}}\mathbf{B}\text{ and }\mu _{2}=\Phi ^{\natural }\circ
\chi _{B}:\mathbf{B}\longrightarrow \mathbf{A}\amalg _{\mathbf{C}}\mathbf{B}%
\text{.}
\end{equation*}%
Now for $f\in \mathcal{F}_{k}$ and $x_{1},\ldots ,x_{k}\in A$, we have:
\begin{eqnarray*}
f^{\widehat{\mathbf{T}(X)}/\Phi }\left( \mu _{1}(x_{1}),\ldots ,\mu
_{1}(x_{k})\right) 
&=&f^{\widehat{\mathbf{T}(X)}/\Phi }\left( \Phi ^{\natural }\circ \chi
_{A}(x_{1}),\ldots ,\Phi ^{\natural }\circ \chi _{A}(x_{k})\right) \medskip
\\
&=&\Phi ^{\natural }\left( f^{\widehat{\mathbf{T}(X)}}\left( \chi
_{A}(x_{1}),\ldots ,\chi _{A}(x_{k})\right) \right) \medskip \\
&=&\Phi ^{\natural }\left( f^{\widehat{\mathbf{T}(X)}}\left( x_{1},\ldots
,x_{k}\right)\right) \medskip \\
&=&\Phi ^{\natural }\left( f(x_{1},\ldots ,x_{k}) \right) \medskip \\
&=&\Phi ^{\natural }\left( f^{\mathbf{A}}(x_{1},\ldots ,x_{k}) \right)\medskip \\
&=&\Phi ^{\natural }\left( \chi _{A}\left( f^{\mathbf{A}}(x_{1},\ldots
,x_{k})\right) \right) \medskip \\
&=&\Phi ^{\natural }\circ \chi _{A}\left( f^{\mathbf{A}}(x_{1},\ldots
,x_{k})\right) \medskip \\
&=&\mu _{1}(f^{\mathbf{A}}(x_{1},\ldots ,x_{k}))\text{.}
\end{eqnarray*}%
This implies that $\mu _{1}$ is a homomorphism of ordered algebras. By a
similar token $\mu _{2}$ also is a homomorphism of ordered algebras.
Furthermore, for any $c\in C$ we have $\mu _{1}\circ \phi _{1}=\mu _{2}\circ
\phi _{2}$, because $[\phi _{1}(c)]_{\Phi }=[\phi _{2}(c)]_{\Phi }$. So, the
object $\mathbf{A}\amalg _{\mathbf{C}}\mathbf{B}$, together with the
morphisms $\mu _{1}$ and $\mu _{2}$, extends the $\mathcal{V}$-diagram in Figure \ref{4}  to the following commutative square.

\begin{figure}[h]
\begin{equation*}
\xymatrix{
\mathbf{C} \ar[d]_{\phi_2}  \ar[r]^{\phi_1}  & \mathbf{A}  \ar[d]_{\mu_1}   \\
\mathbf{B}    \ar[r]^{\mu_2}                   &   \mathbf{A}\amalg _{\mathbf{C}} \mathbf{B}        }
\end{equation*}
\captionof{figure}{} \label{push}
\end{figure}

Next, let $\mathbf{D}$ be an ordered $\mathcal{F}$-algebra admitting
homomorphisms $\gamma _{A}:\mathbf{A}\longrightarrow \mathbf{D}$ and $\gamma
_{B}:\mathbf{B}\longrightarrow \mathbf{D}$, such that $\gamma _{A}\circ \phi
_{1}=\gamma _{B}\circ \phi _{2}$. Then considering the monotone map%
\begin{equation*}
\alpha =\gamma _{A}\dot{\cup}\gamma _{B}:(X,\leq _{X})\longrightarrow
(D,\leq _{D})\text{,}
\end{equation*}%
there exists, by Theorem \ref{Universal}, a unique homomorphism $\beta :%
\widehat{\mathbf{T}(X)}\longrightarrow \mathbf{D}$ such that the diagram in
Figure \ref{universal} commutes. This also implies that $\beta \left\vert _{A}\right.
=\gamma _{A}$, $\beta \left\vert _{B}\right. =\gamma _{B}$. Let us first
prove that $\underset{\Phi }{\preccurlyeq }\,\subseteq \,\overset{%
\longrightarrow }{\ker }\beta $.

Suppose $s\underset{\Phi }{\preccurlyeq }t$, for $s,t\in T(X)$. Then by Lemma 
\ref{Generated} either $s\preccurlyeq t$, in which case $\beta (s)\leq
_{D}\beta (t)$ and we are done, or there exists a scheme
\begin{equation}
s\preccurlyeq p_{1}(x_{1})\overset{\widehat{\mathsf{H}}\dot{\cup}\widehat{\mathsf{H}}^{-1}}{\longrightarrow }p_{1}(y_{1})
\preccurlyeq p_{2}(x_{2})
\overset{\widehat{\mathsf{H}}\dot{\cup}\widehat{\mathsf{H}}^{-1}}{%
\longrightarrow }p_{2}(y_{2})\preccurlyeq \cdots \preccurlyeq p_{n}(x_{n})%
\overset{\widehat{\mathsf{H}}\dot{\cup}\widehat{\mathsf{H}}^{-1}}{%
\longrightarrow }p_{n}(y_{n})\preccurlyeq t\text{.}  \label{Scheme}
\end{equation}%
Using the monotonicity of $\beta $, the inequalities $s\preccurlyeq
p_{1}(x_{1})$, $p_{i}(y_{i})\preccurlyeq p_{i+1}(x_{i+1})$, where $1\leq
i\leq n-1$, and $p_{n}(y_{n})\preccurlyeq t$ imply that $\beta (s)\leq
_{D}\beta (p_{1}(x_{1}))$, $\beta (p_{i}(y_{i}))\leq _{D}\beta
(p_{i+1}(x_{i+1}))$ and $\beta (p_{n}(y_{n}))\leq _{D}\beta (t)$,
respectively. Also, observe that every pair $(x_{i},y_{i})\in \mathsf{H}\dot{%
\cup}\mathsf{H}^{-1}$ comprises a term and its value, either in $\mathbf{A}$
or in $\mathbf{B}$. This implies that $\beta (x_{i})=\beta (y_{i})$. On the other hand,
if $(x_{i},y_{i})\in \mathsf{H}^{\prime }$, then $x_{i}=\phi _{1}(c)$, $%
y_{i}=\phi _{2}(c)$ for some $c\in C$ and we have%
\begin{equation*}
\beta (x_{i})=\beta (\phi _{1}(c))=\gamma _{A}(\phi _{1}(c))=\gamma
_{B}(\phi _{2}(c))=\beta (\phi _{2}(c))=\beta (y_{i}).
\end{equation*}%
Similarly, $(x_{i},y_{i})\in {\mathsf{H}^{\prime }}^{-1}$ implies $\beta
(x_{i})=\beta (y_{i})$. So, for any $(x_{i},y_{i})\in \widehat{\mathsf{H}}\,%
\dot{\cup}\,\widehat{\mathsf{H}}^{-1}$ we have $\beta (x_{i})=\beta (y_{i})$%
. The relation $p_{i}(x_{i})\overset{\widehat{\mathsf{H}}\,\dot{\cup}\,%
\widehat{\mathsf{H}}^{-1}}{\longrightarrow }p_{i}(y_{i})$, $1\leq i\leq n$,
therefore implies that $\beta (p_{i}(x_{i}))=\beta (p_{i}(y_{i}))$. Summing
up the discussion, we can write $\beta (s)\leq _{D}\beta (t)$ from scheme %
(\ref{Scheme}). Hence $\underset{\Phi }{\preccurlyeq }\,\subseteq \,\overset{%
\longrightarrow }{\ker }\beta $.

Now, by Theorem \ref{Homomorphism}, there exists a unique homomorphism $%
\delta $ such that the diagram in Figure \ref{2*} commutes. 

\begin{figure}[h]
\begin{equation*}
\xymatrix{
\widehat{\mathbf{T}(X)} \ar[d]_{\Phi ^{\natural }}  \ar[r]^{\beta}  & \mathbf{D}    \\
\mathbf{A}\amalg_{\mathbf{C}} \mathbf{B}        \ar[ur]_{\delta}                            &           }
\end{equation*}
\captionof{figure}{} \label{2*}
\end{figure}

\noindent Let us next consider the diagram in Figure \ref{3}.

\begin{figure}[h]
\begin{equation*}
\xymatrix{ \mathbf{C} \ar[rr]^{\phi_1} \ar[dd]_{\phi_2}& & \mathbf{A} \ar[dd]_{\mu_1} \ar[dl]_{\chi _{A}}
\ar[dddr]^{\gamma_{A}} & \\ & \widehat{\mathbf{T}(X)} \ar[dr]^{\Phi
^{\natural }} & & \\ \mathbf{B} \ar[ur]^{\chi _{B}} \ar[rr]^{\mu_2}
\ar[drrr]_{\gamma_{B}} & & \mathbf{A}\amalg_ {\mathbf{C}} \mathbf{B} \ar[dr]^{\delta} & \\
& & & \mathbf{D} } \text{.}
\end{equation*}%
\caption{}
\label{3}
\end{figure}
\noindent For $x\in A$, we have 
\begin{eqnarray*}
\delta \circ \mu _{1}(x) &=&\delta \circ (\Phi ^{\natural }\circ \chi
_{A})(x) \\
&=&((\delta \circ \Phi ^{\natural })\circ \chi _{A})(x) \\
&=&\beta \circ \chi _{A}(x) \\
&=&\gamma _{A}(x)\text{.}
\end{eqnarray*}%
Similarly for $y\in B$ one can show that 
\begin{equation*}
\delta \circ \mu _{2}(x)=\gamma _{B}(x).
\end{equation*}%
This implies that the diagram in Figure \ref{3} commutes and the proof is
completed.
\end{proof}

\noindent An amalgam $(\mathbf{C};\mathbf{A},\mathbf{B};\phi _{1},\phi _{2})$ is said to be \textit{embeddable} if the following conditions are satisfied.
\begin{enumerate}
\item $\mu _{1}$ and $\mu _{2}$ in Figure \ref{push} are order-embeddings. 
\item $\mu_{1}(x)=\mu_{2}(y)$, for $x\in A$, $y\in B$, implies $x=\phi_{1}(c)$, $y=\phi _{2}(c)$, for some $c\in C$.
\end{enumerate}

\noindent If only condition (1) is satisfied, then we say that $(\mathbf{C};\mathbf{A},\mathbf{B};\phi _{1},\phi _{2})$
is \textit{weakly embeddable}. 

\begin{definition}
\upshape We call  $(\mathbf{C};\mathbf{A}_{1},\mathbf{A}_{2};\phi _{1},\phi _{2})$ a \textit{special amalgam} if $\mathbf{A}_{1}$ is isomorphic to 
$\mathbf{A}_{2}$ via, say, $\nu :\mathbf{A}_{1}\longrightarrow \mathbf{A}_{2}$, such that $\phi _{2}=\nu \circ \phi _{1}$.
\end{definition}

\noindent One can easily verify that every special amalgam $(\mathbf{C};\mathbf{A}_{1},\mathbf{A}_{2})$ is weakly embeddable.

We shall say that a variety $\mathcal{V}$ of ordered algebras (with pushouts) has the \textit{(weak) amalgamation property}
if every amalgam in $\mathcal{V}$ is (weakly) embeddable. We say that $\mathcal{V}$ has the \textit{special amalgamation property} if every special
amalgam in $\mathcal{V}$ is embeddable.

\begin{definition}
\upshape Let $\mathbf{B}$ be a subalgebra of an ordered $\mathcal{F}$-algebra 
$\mathbf{A}$. The \textit{dominion} of $\mathbf{B}$ in $\mathbf{A}$, denoted by $%
\widehat{Dom}_{\mathbf{A}}\mathbf{B}$, is the set of all elements $d\in A$
such that for every pair of homomorphisms of ordered $\mathcal{F}$-algebras $%
f,g:\mathbf{A} \longrightarrow \mathbf{C}$ satifying $f\left\vert
_{B}\right. =g\left\vert _{B}\right.$ we have $f(d)=g(d)$.
\end{definition}

\noindent Clearly, $\widehat{Dom}_{\mathbf{A}}\mathbf{B}$ contains $B$ and it is
actually a subalgebra of $\mathbf{A}$. We say that $\mathbf{B}$ is 
\textit{closed} in $\mathbf{A}$ if $\widehat{Dom}_{\mathbf{A}}\mathbf{B}=\mathbf{B}$.
We call $\mathbf{B}\in \mathcal{V}$  \textit{absolutely closed} if $\widehat{Dom}_{\mathbf{A}}\mathbf{B}=\mathbf{B}$ for every 
order-embedding 
$\mathbf{B}\hookrightarrow \mathbf{A},$ where $\mathbf{A}\in \mathcal{V}.$
A variety $\mathcal{V}$ is said to be \textit{closed} if every $\mathbf{A}\in \mathcal{V}$ is absolutely closed. Dominions are related to epimorphisms: $f:%
\mathbf{A}\longrightarrow \mathbf{B}$ is an epimorphism iff $\widehat{Dom}_{%
\mathbf{B}}\,\mathbf{Im}(f)=\mathbf{B}$. By disregarding the ordering of
algebras one can also consider the \textit{algebraic dominion} of $\mathbf{B}
$ in $\mathbf{A}$, which we shall denote by $Dom_{\mathbf{A}}\mathbf{B}$.
One can easily verify that%
\begin{equation*}
B\subseteq Dom_{\mathbf{A}}\mathbf{B}\subseteq \widehat{Dom}_{\mathbf{A}}%
\mathbf{B}\subseteq A.
\end{equation*}%
It was shown in \cite{NS} that in the categories of all pomonoids and all
posemigroups $\widehat{Dom}_{\mathbf{A}}\mathbf{B}\subseteq Dom_{\mathbf{A}}%
\mathbf{B}$. In our main result, Theorem \ref{main}, we generalize
this fact to the categories $\mathcal{F}$-\textbf{Oalg}$_{\leq }^{0}$.

If $\mathbf{C}$ is a subalgebra of an ordered algebra $\mathbf{A}$, and if $\mathbf{A}_{1}, \mathbf{A}_{2}$ are disjoint isomorphic copies 
of $\mathbf{A}$ via isomorphisms $\alpha _{i}:\mathbf{A}\longrightarrow 
\mathbf{A}_{i}$ for $i=1,2$, then $\left( \mathbf{C};\mathbf{A}_{1},\mathbf{A%
}_{2};\phi_{1},\phi _{2}\right)$ is a special amalgam, where $\phi
_{i}=\alpha _{i}\left\vert _{C}\right.$ and $\nu =\alpha _{2}\circ \alpha _{1}^{-1}:\mathbf{A}%
_{1}\longrightarrow \mathbf{A}_{2}$. So, we have a
commutative diagram given in Figure \ref{spalg}.

\begin{figure}[h]
\begin{equation*}
\xymatrix{ 
 \mathbf{A} \ar[drr]^{\alpha_1} \ar[ddr]_{\alpha_2}&     & \\
& \mathbf{C} \ar@{_{(}->}[ul] \ar[d]^{\phi_2} \ar[r]^{\phi_1} & \mathbf{A}_1 \ar[d]_{\mu_1} \ar[dl]_{\nu} \\ 
& \mathbf{A}_2 \ar[r]^{\mu_2} & \mathbf{A}_1\amalg_{\mathbf{C}} \mathbf{A}_2 
}
\end{equation*}
\captionof{figure}{} \label{spalg}
\end{figure}

\noindent Let $X=A_1\cup A_2$, and consider the monotone map $\nu_{0}:X\longrightarrow A_{1}$ given
by 
\begin{equation*}
\nu _{0}(x)=\left\{ 
\begin{tabular}{ll}
$x$ & if $x\in A_{1}$ \\ 
$\nu ^{-1}(x)$ & if $x\in A_{2}$%
\end{tabular}
\right. \text{.}
\end{equation*}
By Theorem \ref{Universal}, there exists a unique homomorphism $\overline{\nu%
}_{0} :\widehat{\mathbf{T}(X)}\longrightarrow {\mathbf{A}_{1}}$ such that $%
\overline{\nu}_{0}\left\vert _{X}\right. =\nu _{0}$.

\begin{proposition}
\label{Prop1} Let $\left(\mathbf{C};\mathbf{A}_{1},\mathbf{A}_{2};\phi _{1},\phi _{2}\right)$
be a special amalgam as defined by Figure \ref{spalg}. If $\mu _{1}\left(
x_{1}\right) =\mu _{2}\left( x_{2}\right) $ for some $x_{1}\in A_{1}$, $%
x_{2}\in A_{2}$, then $x_{1}=\alpha _{1}(x)$, and $x_{2}=\alpha _{2}(x)$ for some $x\in A$.
\end{proposition}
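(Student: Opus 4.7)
The plan is to exploit the homomorphism $\overline{\nu}_{0}:\widehat{\mathbf{T}(X)}\longrightarrow \mathbf{A}_{1}$ defined just before the proposition. I claim that $\overline{\nu}_{0}$ factors through the quotient $\widehat{\mathbf{T}(X)}/\Phi = \mathbf{A}_{1}\amalg_{\mathbf{C}}\mathbf{A}_{2}$, i.e. $\Phi \subseteq \ker \overline{\nu}_{0}$. Once this is established, the proposition follows quickly: since $\mu_{1}(x_{1}) = \mu_{2}(x_{2})$ is by construction the statement $[x_{1}]_{\Phi} = [x_{2}]_{\Phi}$, we get $(x_{1}, x_{2}) \in \Phi \subseteq \ker \overline{\nu}_{0}$, and evaluating $\overline{\nu}_{0}$ on $x_{1}\in A_{1}$ and $x_{2}\in A_{2}$ yields $x_{1} = \nu^{-1}(x_{2})$. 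Setting $x = \alpha_{1}^{-1}(x_{1}) \in A$ then gives $\alpha_{1}(x) = x_{1}$ and $\alpha_{2}(x) = \alpha_{2}\alpha_{1}^{-1}(x_{1}) = \nu(x_{1}) = x_{2}$, as required.

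To verify $\Phi \subseteq \ker \overline{\nu}_{0}$, recall that $\ker \overline{\nu}_{0} = (\overrightarrow{\ker}\,\overline{\nu}_{0}) \cap (\overrightarrow{\ker}\,\overline{\nu}_{0})^{-1}$ is an order-congruence on $\widehat{\mathbf{T}(X)}$, and that $\Phi$ is the \emph{smallest} order-congruence containing $\widehat{\mathsf{H}} = \mathsf{R}_{A_{1}} \dot{\cup}\mathsf{R}_{A_{2}} \cup \mathsf{H}^{\prime}$. Therefore it suffices to show that $\overline{\nu}_{0}$ equalizes each generator. For a pair $(t(a_{1},\ldots,a_{k}), t^{\mathbf{A}_{1}}(a_{1},\ldots,a_{k})) \in \mathsf{R}_{A_{1}}$, both entries map to $t^{\mathbf{A}_{1}}(a_{1},\ldots,a_{k})$, since $\overline{\nu}_{0}$ is a homomorphism and restricts to the identity on $A_{1}$. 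For a pair $(t(b_{1},\ldots,b_{l}), t^{\mathbf{A}_{2}}(b_{1},\ldots,b_{l})) \in \mathsf{R}_{A_{2}}$, homomorphicity of $\overline{\nu}_{0}$ and of $\nu^{-1}:\mathbf{A}_{2}\to \mathbf{A}_{1}$ gives
\begin{equation*}
\overline{\nu}_{0}\bigl(t(b_{1},\ldots,b_{l})\bigr) = t^{\mathbf{A}_{1}}\bigl(\nu^{-1}(b_{1}),\ldots,\nu^{-1}(b_{l})\bigr) = \nu^{-1}\bigl(t^{\mathbf{A}_{2}}(b_{1},\ldots,b_{l})\bigr) = \overline{\nu}_{0}\bigl(t^{\mathbf{A}_{2}}(b_{1},\ldots,b_{l})\bigr).
\end{equation*}
Finally, for $(\phi_{1}(c),\phi_{2}(c)) \in \mathsf{H}^{\prime}$, the defining condition $\phi_{2} = \nu \circ \phi_{1}$ of a special amalgam yields $\overline{\nu}_{0}(\phi_{1}(c)) = \phi_{1}(c)$ and $\overline{\nu}_{0}(\phi_{2}(c)) = \nu^{-1}(\phi_{2}(c)) = \nu^{-1}(\nu(\phi_{1}(c))) = \phi_{1}(c)$.

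The main conceptual step (and perhaps the only non-routine ingredient) is identifying the \emph{right} map from $\widehat{\mathbf{T}(X)}$ down to $\mathbf{A}_{1}$: namely a retraction that undoes $\nu$ on the $A_{2}$-copy and acts as the identity on the $A_{1}$-copy, so that \emph{exactly} the relations $\mathsf{R}_{A_{1}}, \mathsf{R}_{A_{2}}, \mathsf{H}^{\prime}$ become trivial under it. Once $\overline{\nu}_{0}$ is in hand, verifying that each generator lies in $\ker \overline{\nu}_{0}$ is a brief calculation; the rest is bookkeeping with $\alpha_{1}$ and $\alpha_{2}$.
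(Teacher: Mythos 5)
Your proposal is correct, and it isolates the same key construction as the paper — the retraction $\overline{\nu}_{0}$ that is the identity on $A_{1}$ and $\nu^{-1}$ on $A_{2}$, together with the verification that $\overline{\nu}_{0}$ equalizes every generating pair in $\mathsf{R}_{A_{1}}$, $\mathsf{R}_{A_{2}}$ and $\mathsf{H}^{\prime}$ (your three computations coincide with the paper's Case 1 and Case 2), and the same endgame via $x=\alpha_{1}^{-1}(x_{1})$ and $\nu=\alpha_{2}\circ\alpha_{1}^{-1}$. Where you genuinely differ is the passage from the generators to all of $\Phi$: the paper unfolds $[x_{1}]_{\Phi}=[x_{2}]_{\Phi}$ into the two schemes supplied by Lemma \ref{Generated}, pushes $\overline{\nu}_{0}$ through them link by link (monotonicity across the $\preccurlyeq$-steps, equalization across the $\widehat{\mathsf{H}}\,\dot{\cup}\,\widehat{\mathsf{H}}^{-1}$-steps) and finishes by antisymmetry of $\leq_{A_{1}}$, whereas you observe that $\ker\overline{\nu}_{0}=\bigl(\overset{\longrightarrow}{\ker}\,\overline{\nu}_{0}\bigr)\cap\bigl(\overset{\longrightarrow}{\ker}\,\overline{\nu}_{0}\bigr)^{-1}$ is an order-congruence containing $\widehat{\mathsf{H}}$ and appeal to minimality of the generated order-congruence. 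Your route is shorter and avoids the scheme bookkeeping, but it leans on the fact that ``generated'' means least, i.e.\ that there is a smallest order-congruence containing $\widehat{\mathsf{H}}$; this is true (order-congruences are stable under intersection, since a closed chain for the intersection is a closed chain for each factor), though the paper never states it explicitly, and if you prefer to stay strictly within the paper's toolkit you can recover $\Phi\subseteq\ker\overline{\nu}_{0}$ by one application of Lemma \ref{Generated} exactly as the paper does — which in addition yields the directed containment $\underset{\Phi}{\preccurlyeq}\,\subseteq\,\overset{\longrightarrow}{\ker}\,\overline{\nu}_{0}$ and rehearses the scheme-rewriting technique that the paper reuses in Theorem \ref{Pushout 1} and in the proof of Theorem \ref{main}.
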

\begin{proof}
Let $\mu _{1}\left( x_{1}\right) =\mu _{2}\left( x_{2}\right) $. Then $[x_1]_{\Phi}=[x_2]_{\Phi}$ (refer to the proof of Theorem \ref{Pushout 1}),
and by Lemma \ref{Generated} we have the schemes 
\begin{equation}
x_{1}\preccurlyeq p_{1}(y_{1})\overset{\widehat{\mathsf{H}}\cup \widehat{%
\mathsf{H}}^{-1}}{\longrightarrow }p_{1}(y_{1}^{\prime })\preccurlyeq
p_{2}(y_{2})\overset{\widehat{\mathsf{H}}\cup \widehat{\mathsf{H}}^{-1}}{%
\longrightarrow }p_{2}(y_{2}^{\prime })\preccurlyeq \cdots \preccurlyeq p_{n}(y_{n})%
\overset{\widehat{\mathsf{H}}\cup \widehat{\mathsf{H}}^{-1}}{\longrightarrow 
} p_{n}(y_{n}^{\prime })\preccurlyeq x_{2}\text{,}  
\label{Domsch1}
\end{equation}
\begin{equation}
x_{2}\preccurlyeq p_{n+1}(y_{n+1}^{\prime })\overset{\widehat{\mathsf{H}}%
\cup \widehat{\mathsf{H}}^{-1}}{\longrightarrow }p_{n+1}(y_{n+1})%
\preccurlyeq \cdots \preccurlyeq p_{n+k}(y_{n+k}^{\prime })\overset{\widehat{%
\mathsf{H}}\cup \widehat{\mathsf{H}}^{-1}}{\longrightarrow }%
p_{n+k}(y_{n+k})\preccurlyeq x_{1}.
\label{Domsch2}
\end{equation}

Let us consider scheme (\ref{Domsch1}). 
By the monotonicity of $\overline{\nu}_{0}$, the inequalities $%
x_1\preccurlyeq p_{1}(y_{1})$, $p_{i}(y_{i}^{\prime})\preccurlyeq
p_{i+1}(y_{i+1})$, for $1\leq i\leq n-1$, and $p_{n}(y_{n}^{\prime})%
\preccurlyeq x_2$ imply $\overline{\nu}_{0}(x_1)\leq_{A_1} \overline{\nu}%
_{0}(p_{1}(y_{1}))$, $\overline{\nu}_{0}(p_{i}(y_{i}^{\prime}))\leq_{A_1} 
\overline{\nu}_{0}(p_{i+1}(y_{i+1}))$, and $\overline{\nu}%
_{0}(p_{n}(y_{n}^{\prime}))\leq_{A_1} \overline{\nu}_{0}(x_2)$ respectively.
For an arbitrary $(y_i, y_i^{\prime})\in {\widehat{\mathsf{H}}\cup \widehat{\mathsf{H}}^{-1}}$ we consider two cases. 

{\bf Case 1:} $(y_i, y_i^{\prime})\in \mathsf{H}\cup \mathsf{H}^{-1}$. Suppose $(y_i,
y_i^{\prime})\in \mathsf{H}$, then $(y_i, y_i^{\prime})=(t(z_1,\dots, z_l),
t^{\mathbf{A}_{j}}(z_1,\dots, z_l))$, where $z_1,\dots, z_l\in A_{j}$ and $j\in \{1,2\}$. 
If $z_1,\dots, z_l\in A_{1}$, then 
$\overline{\nu}_{0}(y_i)=t^{\mathbf{A}_{1}}(z_1,\dots, z_l)=\overline{\nu}_{0}(y_i^{\prime}).$ 
If $z_1,\dots, z_l\in A_{2}$, then 
\[ \overline{\nu}_{0}(y_i)= \overline{\nu}_{0}(t(z_1,\dots, z_l))=t^{\mathbf{A}_{1}}(\overline{\nu}_{0} (z_1),\dots, \overline{\nu}_{0}(z_l))
=t^{\mathbf{A}_{1}}(\nu^{-1}(z_1),\dots, \nu^{-1}(z_l)), \]
and 
$\overline{\nu}_{0}(y_i^{\prime})=\nu^{-1}(t^{\mathbf{A}_{2}}(z_1,\dots, z_l))$. Since 
$\nu^{-1}$ is an isomorphism we have 
\[\nu^{-1}(t^{\mathbf{A}_{2}}(z_1,\dots, z_l))=t^{\mathbf{A}_{1}}(\nu^{-1}(z_1),\dots, \nu^{-1}(z_l))\] 
and therefore $\overline{\nu}_{0}(y_i)=\overline{\nu}_{0}(y_i^{\prime})$. If $(y_i, y_i^{\prime})\in \mathsf{H}^{-1}$, then
$(y_i^{\prime}, y_i)\in \mathsf{H}$ and the above argument applies. 

{\bf Case 2:} $(y_i, y_i^{\prime})\in \mathsf{H}^{\prime}\cup {\mathsf{H}^{\prime}}%
^{-1}$. If $(y_i, y_i^{\prime})\in \mathsf{H}^{\prime}$, then $y_i=\phi_1(c)$%
, $y_i^{\prime}=\phi_2(c)$ for some $c\in C$, and we have $\overline{\nu}%
_{0}(y_i)=\overline{\nu}_{0}(\phi_1(c))=\phi_1(c)$, $\overline{\nu}%
_{0}(y_i^{\prime})=\overline{\nu}_{0}(\phi_2(c))=\nu^{-1}\phi_2(c)=\phi_1(c)$%
, i.e., $\overline{\nu}_{0}(y_i)=\overline{\nu}_{0}(y_i^{\prime})$.
Similarly, $(y_i, y_i^{\prime})\in {\mathsf{H}^{\prime}}^{-1}$ implies $%
\overline{\nu}_{0}(y_i)=\overline{\nu}_{0}(y_i^{\prime})$.

So, we conclude that the relations $p_{i}(y_{i})\overset{ \widehat{\mathsf{H}}\,\cup\,%
\widehat{\mathsf{H}}^{-1}}{\longrightarrow }p_{i}(y_{i}^{\prime})$ imply $%
\overline{\nu}_{0}(p_{i}(y_{i}))=\overline{\nu}_{0}(p_{i}(y_{i}^{\prime}))$.
One may therefore write the following sequence from scheme (\ref{Domsch1}):
\begin{equation}
x_{1}\leq_{A_1} \overline{\nu}_{0}(p_{1}(y_{1}))= \overline{\nu}_{0}(p_{1}(y_{1}^{\prime }))\leq_{A_1}
\cdots \leq_{A_1} \overline{\nu}_{0}(p_{n}(y_{n}))
= \overline{\nu}_{0}(p_{n}(y_{n}^{\prime }))\leq_{A_1} \overline{\nu}_{0}(x_{2})\text{.}  
\label{Domsch1'}
\end{equation}

\noindent Using the same reasoning, one gets 
\begin{equation}
\overline{\nu}_{0} (x_2)\leq _{A_1}\overline{\nu}_{0}(x_1)=x_1, 
\label{Domsch2'}
\end{equation}
from scheme (\ref{Domsch2}).
Combining (\ref{Domsch1'}) and (\ref{Domsch2'}) we have $x_1=\overline{\nu}_{0} (x_2)$.
Finally, taking $x_{1}=\alpha _{1}(x)$, $x\in A$, we get 
\[ x_{2} =\nu (x_{1}) =\nu \alpha _{1}(x) =\alpha _{2}(x),\]
as required. 
\end{proof}

\begin{proposition} \label{PropDom}
Let $\mathbf{C}$ be a subalgebra of an ordered algebra $\mathbf{A}$. Let $\alpha _{i}:\mathbf{A}\longrightarrow \mathbf{A}_{i}$ for $i=1,2$ be
isomorphisms and let $\alpha _{i}\left\vert _{C}\right. =\phi _{i}: \mathbf{C}\longrightarrow \mathbf{A}_{i}$. Then considering the special
amalgam $(\mathbf{C};\mathbf{A}_{1},\mathbf{A}_{2};\phi _{1},\phi_{2})$ we
have 
\begin{equation*}
\widehat{\text{Dom}}_{\mathbf{A}} \mathbf{C}  \cong \widehat{ 
\text{Dom}}_{\mathbf{A}_{i}}  \phi _{i}\left( \mathbf{C}\right) 
=\mu_{i}^{-1}\left[ \mu _{1}\left( \mathbf{A}_{1}\right) \cap \mu _{2}\left( 
\mathbf{A}_{2}\right) \right],
\end{equation*}
were $\mu _{i}$ for $i=1,2$ are as defined in the proof of Theorem \ref{Pushout 1}. 
\end{proposition}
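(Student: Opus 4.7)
The plan is to prove the two claims separately: first the isomorphism $\widehat{Dom}_{\mathbf{A}}\mathbf{C} \cong \widehat{Dom}_{\mathbf{A}_i}\phi_i(\mathbf{C})$, and then the set-theoretic equality with $\mu_i^{-1}[\mu_1(\mathbf{A}_1) \cap \mu_2(\mathbf{A}_2)]$. I work with $i=1$; the case $i=2$ is identical by symmetry. The isomorphism is essentially automatic because $\alpha_1 \colon \mathbf{A} \to \mathbf{A}_1$ is an isomorphism of ordered algebras carrying $\mathbf{C}$ to $\phi_1(\mathbf{C})$: if $d \in \widehat{Dom}_{\mathbf{A}}\mathbf{C}$ and $f, g \colon \mathbf{A}_1 \to \mathbf{D}$ agree on $\phi_1(\mathbf{C})$, then $f \circ \alpha_1$ and $g \circ \alpha_1$ agree on $\mathbf{C}$, forcing $f(\alpha_1(d)) = g(\alpha_1(d))$; the reverse inclusion is obtained by composing with $\alpha_1^{-1}$. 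Thus $\alpha_1$ restricts to the desired isomorphism of the two dominions.

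For $\widehat{Dom}_{\mathbf{A}_1}\phi_1(\mathbf{C}) \subseteq \mu_1^{-1}[\mu_1(\mathbf{A}_1) \cap \mu_2(\mathbf{A}_2)]$, I exhibit a distinguished pair of homomorphisms out of $\mathbf{A}_1$ that agree on $\phi_1(\mathbf{C})$, namely $\mu_1$ and $\mu_2 \circ \nu \colon \mathbf{A}_1 \to \mathbf{A}_1 \amalg_{\mathbf{C}} \mathbf{A}_2$. Indeed, using $\phi_2 = \nu \circ \phi_1$ and the commutativity of the pushout square, for each $c \in C$ one has $\mu_1(\phi_1(c)) = \mu_2(\phi_2(c)) = \mu_2(\nu(\phi_1(c)))$. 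Hence any $d$ in the dominion satisfies $\mu_1(d) = \mu_2(\nu(d))$, placing $\mu_1(d)$ in $\mu_1(\mathbf{A}_1) \cap \mu_2(\mathbf{A}_2)$.

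The reverse inclusion is the main point, and this is where Proposition \ref{Prop1} is indispensable. Suppose $\mu_1(d) = \mu_2(x)$ for $d \in A_1$ and some $x \in A_2$. Proposition \ref{Prop1} supplies $a \in A$ with $d = \alpha_1(a)$ and $x = \alpha_2(a)$, whence $x = \nu(d)$ and in particular $\nu^{-1}(x) = d$. Given any pair of homomorphisms $f, g \colon \mathbf{A}_1 \to \mathbf{D}$ with $f|_{\phi_1(\mathbf{C})} = g|_{\phi_1(\mathbf{C})}$, I set $\gamma_{A_1} = f$ and $\gamma_{A_2} = g \circ \nu^{-1}$; the chain $\gamma_{A_2} \circ \phi_2 = g \circ \nu^{-1} \circ \phi_2 = g \circ \phi_1 = f \circ \phi_1 = \gamma_{A_1} \circ \phi_1$ shows these cohere over $\mathbf{C}$, so the pushout property established in Theorem \ref{Pushout 1} produces a unique $\delta \colon \mathbf{A}_1 \amalg_{\mathbf{C}} \mathbf{A}_2 \to \mathbf{D}$ with $\delta \circ \mu_1 = f$ and $\delta \circ \mu_2 = g \circ \nu^{-1}$. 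Applying $\delta$ to $\mu_1(d) = \mu_2(x)$ yields $f(d) = g(\nu^{-1}(x)) = g(d)$, proving $d \in \widehat{Dom}_{\mathbf{A}_1}\phi_1(\mathbf{C})$ and completing the argument.
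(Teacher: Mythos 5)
Your proposal is correct and follows essentially the same route as the paper: the isomorphism via $\alpha_i$ (which the paper simply calls obvious), the forward inclusion by noting that $\mu_1$ and $\mu_2\circ\nu$ agree on $\phi_1(\mathbf{C})$, and the reverse inclusion by combining Proposition \ref{Prop1} with the pushout property of $\mathbf{A}_1\amalg_{\mathbf{C}}\mathbf{A}_2$ applied to $f$ and $g\circ\nu^{-1}$. Your explicit check that $f\circ\phi_1=(g\circ\nu^{-1})\circ\phi_2$ is a small, welcome addition that the paper leaves to its commutative diagram.
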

\begin{proof}
Obviously $\widehat{\text{Dom}}_{\mathbf{A}} \mathbf{C} \cong 
\widehat{\text{Dom}}_{\mathbf{A}_{i}} \phi _{i}\left( \mathbf{C}\right)$. Also, it suffices to prove that
\begin{equation*}
\widehat{\text{Dom}}_{\mathbf{A}_{1}} \phi _{1}\left( \mathbf{C}\right)  =\mu _{1}^{-1}\left[ \mu _{1}\left( \mathbf{A}_{1}\right)
\cap \mu _{2}\left( \mathbf{A}_{2}\right) \right] .
\end{equation*}%
Consider $\nu :\mathbf{A}_{1}\longrightarrow \mathbf{A}_{2}$, as defined in Figure \ref{spalg}.
 Let $x\in \widehat{\text{Dom}}_{\mathbf{A}_{1}} \phi _{1}\left( \mathbf{C}\right)$ and let 
$y\in \mathbf{C}$. Observe that
\begin{equation*}
\begin{tabular}{lll}
$\mu _{1}\left( \phi _{1}(y)\right) $ & $=\mu _{1}\circ \phi _{1}(y)$ &  \\ 
& $=\mu _{2}\circ \phi _{2}(y)$ & $\text{by commutativity of the diagram (%
\ref{spalg})}$ \\ 
& $=\mu _{2}\left( \nu \circ \phi _{1}(y)\right) $ & $\text{because }\phi
_{2}=\nu \circ \phi _{1}$ \\ 
& $=\mu _{2}\circ \nu \left( \phi _{1}\left( y\right) \right) $.& 
\end{tabular}%
\end{equation*}%
So, $\mu _{1}$ and $\mu _{2}\circ \nu $ agree on $\phi _{1}\left( \mathbf{C}%
\right) $. This implies that
$\mu _{1}\left( x\right) =\mu _{2}(\nu (x)),$
whence $\mu _{1}(x)\in \mu _{1}\left( \mathbf{A}_{1}\right) \cap \mu
_{2}\left( \mathbf{A}_{2}\right) $, i.e., 
\begin{equation*}
x\in \mu _{1}^{-1}\left[ \mu _{1}\left( \mathbf{A}_{1}\right) \cap \mu
_{2}\left( \mathbf{A}_{2}\right) \right] .
\end{equation*}%
Now, suppose that $x_{1}\in \mu _{1}^{-1}\left[ \mu _{1}\left( 
\mathbf{A}_{1}\right) \cap \mu _{2}\left( \mathbf{A}_{2}\right) \right] $.
This implies that 
\begin{equation*}
\mu _{1}(x_{1})\in \mu _{1}\left( \mathbf{A}_{1}\right) \cap \mu _{2}\left( 
\mathbf{A}_{2}\right) .
\end{equation*}%
Let $\mu _{1}(x_{1})=\mu _{2}(x_{2})$, $x_{2}\in A_{2}$. Then by Proposition \ref{Prop1} we have $x\in A$ such that $x_{1}=\alpha _{1}(x)$, $x_{2}=\alpha
_{2}(x)$. This yields, 
\begin{align*}
\mu _{1}\left( x_{1}\right) & =\mu _{1}(\alpha _{1}(x)) \\
& =\mu _{2}(\alpha _{2}(x)) \\
& =\mu _{2}\circ \alpha _{2}(x) \\
& =\mu _{2}(\nu \circ \alpha _{1}(x)) \\
& =\mu _{2}\circ \nu (x_{1}).
\end{align*}
So, we have 
\begin{equation}\label{mu1=mu2nu}
\mu _{1}\left( x_{1}\right) =\mu _{2}\circ \nu (x_{1}).
\end{equation}

Next, let $\mathbf{B}$ be an arbitrary ordered algebra admitting homomorphisms 
$f,g:\mathbf{A}_{1}\longrightarrow \mathbf{B}$ such that $f \circ \phi_1=g\circ \phi_1$ 
Define $g^{\prime }:\mathbf{A}_{2}\longrightarrow \mathbf{B}$
by $g^{\prime }=g\circ \nu ^{-1}$, and consider the commutative diagram in Figure \ref{Dom_}, where $\psi$ is the unique homomorphism given by the pushout 
$\mathbf{A}_1\amalg _{\mathbf{C}} \mathbf{A}_2$.
\begin{figure}[h]
\begin{equation*}
\xymatrix{ \mathbf{C} \ar[d]_{\phi_2} \ar[r]^{\phi_1} & \mathbf{A}_1
\ar[d]_{\mu_1} \ar[dl]_{\nu} \ar[ddr]^{f} & \\ \mathbf{A}_2 \ar[r]^{\mu_2}
\ar[drr]_{g^{\prime}} & \mathbf{A}_1\amalg _{\mathbf{C}} \mathbf{A}_2
\ar[dr]^{\psi} & \\ & & \mathbf{B} }
\end{equation*}
\captionof{figure}{} \label{Dom_}
\end{figure}

\noindent Now calculate 
\begin{eqnarray*}
f(x_{1}) &=&\psi\circ \mu _{1}\left( x_{1}\right) \\
&=&\psi\circ( \mu _{2}\circ \nu \left( x_{1}\right)) \text{ \ \ \ \ \ \ using (\ref{mu1=mu2nu})} \\
&=&g^{\prime }\circ \nu \left( x_{1}\right) \\
&=&g(x_{1})\text{,}
\end{eqnarray*}%
whence $x_{1}\in \widehat{\text{Dom}}_{\mathbf{A}_{1}} \phi _{1}(\mathbf{C}),$ as required.
\end{proof}

\begin{corollary} \label{cor4Eq}
\label{Sp.amalgamation} The following conditions are equivalent:

\begin{itemize}
\item[1)] a special amalgam $(\mathbf{C}; \mathbf{A}_{1},\mathbf{A}_{2};\phi
_{1},\phi _{2})$ is embeddable,

\item[2)] $\mu _{i}^{-1}\left[ \mu _{1}\left( \mathbf{A}_{1}\right) \cap
\mu _{2}\left( \mathbf{A}_{2}\right) \right] =\phi _{i}(\mathbf{C})$, where $\mu _{i}$, $\phi _{i}$, $1\leq i \leq 2$, are as given in 
Figure \ref{spalg}. 

\item[3)] $\widehat{\text{Dom}}_{\mathbf{A}} \mathbf{C} \cong 
\widehat{\text{Dom}}_{\mathbf{A}_{i}}  \phi _{i}\left( \mathbf{C} \right)=\phi _{i}(\mathbf{C})\cong \mathbf{C}$, where both of the
isomorphisms are the restrictions of $\alpha _{i}:\mathbf{A}\longrightarrow \mathbf{A}_{i}$ as defined in Figure \ref{spalg}, 

\item[4)] $\mathbf{C}$ is closed in $\mathbf{A}$.
\end{itemize}
\end{corollary}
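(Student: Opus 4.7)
The plan is to close the cycle (4) $\Rightarrow$ (3) $\Rightarrow$ (2) $\Rightarrow$ (1) $\Rightarrow$ (4), with Proposition \ref{PropDom} carrying the main content; everything else is a bookkeeping translation between three languages: dominions, pushout preimages, and amalgam embeddability. Two general facts will be reused throughout. First, the isomorphism $\alpha_i:\mathbf{A}\longrightarrow\mathbf{A}_i$ restricts to an isomorphism $\mathbf{C}\cong\phi_i(\mathbf{C})$ and hence induces an isomorphism $\widehat{\mathrm{Dom}}_{\mathbf{A}}\mathbf{C}\cong\widehat{\mathrm{Dom}}_{\mathbf{A}_i}\phi_i(\mathbf{C})$. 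Second, every special amalgam has already been noted to be weakly embeddable, so $\mu_1,\mu_2$ are order-embeddings for free.

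For (3) $\Leftrightarrow$ (4), I would use the dominion isomorphism above to turn the defining condition $\widehat{\mathrm{Dom}}_{\mathbf{A}}\mathbf{C}=\mathbf{C}$ of closedness into $\widehat{\mathrm{Dom}}_{\mathbf{A}_i}\phi_i(\mathbf{C})=\phi_i(\mathbf{C})$, which is exactly (3). For (2) $\Leftrightarrow$ (3), apply Proposition \ref{PropDom} directly: it identifies $\widehat{\mathrm{Dom}}_{\mathbf{A}_i}\phi_i(\mathbf{C})$ with $\mu_i^{-1}[\mu_1(\mathbf{A}_1)\cap\mu_2(\mathbf{A}_2)]$, so the dominion equaling $\phi_i(\mathbf{C})$ is literally the preimage equaling $\phi_i(\mathbf{C})$.

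For (1) $\Leftrightarrow$ (2), since weak embeddability comes for free for special amalgams, embeddability reduces to the second clause of its definition, namely $\mu_1(x)=\mu_2(y)$ with $x\in A_1$, $y\in A_2$ forces $x=\phi_1(c)$, $y=\phi_2(c)$ for some $c\in C$. Restated set-theoretically this is $\mu_i^{-1}[\mu_1(\mathbf{A}_1)\cap\mu_2(\mathbf{A}_2)]\subseteq\phi_i(\mathbf{C})$. The reverse inclusion is automatic from the commutativity $\mu_1\circ\phi_1=\mu_2\circ\phi_2$ established in Theorem \ref{Pushout 1}, since for $c\in C$ we get $\mu_1(\phi_1(c))=\mu_2(\phi_2(c))\in\mu_1(\mathbf{A}_1)\cap\mu_2(\mathbf{A}_2)$.

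There is no real obstacle: once Proposition \ref{PropDom} is in hand, the corollary is essentially a translation exercise. The only care point is keeping the two clauses of the definition of \emph{embeddable amalgam} separate, so that for special amalgams one clearly sees the first clause being automatic while the second clause corresponds precisely to the set equality in (2).
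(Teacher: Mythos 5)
Your proposal is correct and follows essentially the same route as the paper: $(1)\Leftrightarrow(2)$ by unwinding the definition of embeddability (with weak embeddability automatic for special amalgams and the reverse inclusion from $\mu_1\circ\phi_1=\mu_2\circ\phi_2$), $(2)\Leftrightarrow(3)$ directly from Proposition \ref{PropDom}, and $(3)\Leftrightarrow(4)$ by transporting dominions along $\alpha_i$. The only cosmetic difference is that the paper spells out the transport argument for $(3)\Leftrightarrow(4)$ (a contradiction argument one way, an explicit $f\circ\alpha_i^{-1},\,g\circ\alpha_i^{-1}$ argument the other), whereas you package it as the single observation that $\alpha_i$ restricts to an isomorphism $\widehat{\mathrm{Dom}}_{\mathbf{A}}\mathbf{C}\cong\widehat{\mathrm{Dom}}_{\mathbf{A}_i}\phi_i(\mathbf{C})$, which is the same idea.
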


\begin{proof}
\noindent (1) $\Longleftrightarrow $ (2) is obvious.

\noindent (2) $\Longleftrightarrow $ (3) follows from Proposition \ref{PropDom}.

\noindent (3) $\Longrightarrow $ (4) Let $\widehat{\text{Dom}}_{\mathbf{A}}
\mathbf{C} \cong \mathbf{C}$. 
Suppose that there exists
$x\in \widehat{\text{Dom}}_{\mathbf{A}} \mathbf{C}\setminus \mathbf{C}$. Then, because $\alpha_i (x)\not\in \alpha _{i}(\mathbf{C})=\phi _{i}(\mathbf{C})$, we have by (3) 
$\alpha_i (x)\not\in  \widehat{\text{Dom}}_{\mathbf{A}_i} \phi _{i}(\mathbf{C})$. This implies that 
$x\not\in \alpha_{i}^{-1} [ \widehat{\text{Dom}}_{\mathbf{A}_i} \phi _{i}(\mathbf{C})]= \widehat{\text{Dom}}_{\mathbf{A}} \mathbf{C},$ contradiction.
Thus, $\widehat{\text{Dom}}_{\mathbf{A}} \mathbf{C} =\mathbf{C}$.

\noindent (4) $\Longrightarrow $ (3) If $\mathbf{C}$ is closed in $\mathbf{A}$, then $%
\widehat{\text{Dom}}_{\mathbf{A}} \mathbf{C} =\mathbf{C}\cong
\phi _{i}(\mathbf{C})$. Since $\phi _{i}(\mathbf{C})\subseteq \widehat{\text{Dom}}_{\mathbf{A}_{i}} \phi _{i}\left( \mathbf{C} \right)$,
the proof will be accomplished if we show that
\begin{equation*}
\widehat{\text{Dom}}_{\mathbf{A}_{i}} \phi _{i}\left( \mathbf{C}%
\right) \subseteq \phi _{i}(\mathbf{C})\,\,\text{for } i=1, 2\text{.}
\end{equation*}
To this end, let $\alpha _{i}(x)\in \widehat{\text{Dom}}_{\mathbf{A}_{i}} \phi _{i}\left( \mathbf{C}\right),$ where $x\in \mathbf{A}$. Now,
if $f,g:\mathbf{A}\longrightarrow \mathbf{B}$ are homomorphisms of ordered
algebras, with $f\left\vert _{\mathbf{C}}\right. =g\left\vert _{\mathbf{C}%
}\right. $, then for
\begin{equation*}
f\circ \alpha _{i}^{-1},g\circ \alpha _{i}^{-1}:\mathbf{A}%
_{i}\longrightarrow \mathbf{B}, \quad \text{(as defined in Figures (\ref{spalg}) and (\ref{Dom_}))}
\end{equation*}%
we have $f\circ \phi _{i}^{-1}(\phi _{i}\left( c\right) )=g\circ \phi
_{i}^{-1}(\phi _{i}\left( c\right) ),$ where $c\in \mathbf{C}$. Because $\alpha
_{i}(x)\in \widehat{\text{Dom}}_{\mathbf{A}_{i}} \phi _{i}\left( 
\mathbf{C}\right),$ we have $f\circ \alpha
_{i}^{-1}(\alpha _{i}(x))=g\circ \alpha _{i}^{-1}(\alpha _{i}(x))$, whence $%
f(x)=g(x)$. This implies that $x\in \widehat{\text{Dom}}_{\mathbf{A}} \mathbf{C}=\mathbf{C}$ and so 
$\alpha _{i}(x)\in \alpha _{i}(\mathbf{C})=\phi _{i}(\mathbf{C})$, as required.
\end{proof}

\noindent The following two propositions relate special amalgamation, epis and absolute closure in varieties of ordered algebras. 

\begin{proposition}
Let $\mathcal{V}$ be a variety of ordered algebras. Then epis are surjective
in $\mathcal{V}$ iff $\mathcal{V}$ is closed.
\end{proposition}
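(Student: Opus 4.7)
The proof splits along the biconditional. For the easier direction $(\Leftarrow)$, suppose every member of $\mathcal{V}$ is absolutely closed and let $f:\mathbf{A}\to\mathbf{C}$ be an epimorphism in $\mathcal{V}$. The dominion characterization of epis recalled in Section \ref{SecAmal} gives $\widehat{Dom}_{\mathbf{C}}\,\mathbf{Im}(f)=\mathbf{C}$. But $\mathbf{Im}(f)$ is an ordered subalgebra of $\mathbf{C}$ with the restricted order, so it lies in $\mathcal{V}$ and the inclusion $\mathbf{Im}(f)\hookrightarrow\mathbf{C}$ is an order-embedding; absolute closure of $\mathbf{Im}(f)$ then forces $\widehat{Dom}_{\mathbf{C}}\,\mathbf{Im}(f)=\mathbf{Im}(f)$. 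Combining the two equalities yields $\mathbf{Im}(f)=\mathbf{C}$, so $f$ is surjective.

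For the harder direction $(\Rightarrow)$, I argue contrapositively. If $\mathcal{V}$ is not closed, some $\mathbf{B}\in\mathcal{V}$ admits an order-embedding $\mathbf{B}\hookrightarrow\mathbf{A}$ into some $\mathbf{A}\in\mathcal{V}$ with $\mathbf{D}:=\widehat{Dom}_{\mathbf{A}}\mathbf{B}$ strictly containing $\mathbf{B}$. Since $\mathbf{D}$ is an ordered subalgebra of $\mathbf{A}$, it lies in $\mathcal{V}$, and the inclusion $j:\mathbf{B}\hookrightarrow\mathbf{D}$ is not surjective. The plan is to show that $j$ is nevertheless an epimorphism in $\mathcal{V}$, producing the desired non-surjective epi and contradicting the hypothesis.

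To show $j$ is an epimorphism, I take homomorphisms $h,k:\mathbf{D}\to\mathbf{E}$ in $\mathcal{V}$ with $h|_{\mathbf{B}}=k|_{\mathbf{B}}$ and aim to conclude $h=k$. Since the defining property of $\widehat{Dom}_{\mathbf{A}}\mathbf{B}$ only tests homomorphisms out of $\mathbf{A}$, the key step is to pushout-extend $h$ and $k$ along the inclusion $\mathbf{D}\hookrightarrow\mathbf{A}$. I form the pushouts $\mathbf{F}_{h}$ of the span $\mathbf{A}\hookleftarrow\mathbf{D}\xrightarrow{h}\mathbf{E}$ and $\mathbf{F}_{k}$ of $\mathbf{A}\hookleftarrow\mathbf{D}\xrightarrow{k}\mathbf{E}$ in $\mathcal{V}$, and then amalgamate $\mathbf{F}_{h}$ and $\mathbf{F}_{k}$ over $\mathbf{E}$ to obtain a common target $\mathbf{G}\in\mathcal{V}$ carrying extensions $\hat{h},\hat{k}:\mathbf{A}\to\mathbf{G}$ and a canonical map $\rho:\mathbf{E}\to\mathbf{G}$ satisfying $\hat{h}|_{\mathbf{D}}=\rho\circ h$ and $\hat{k}|_{\mathbf{D}}=\rho\circ k$. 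Because $h|_{\mathbf{B}}=k|_{\mathbf{B}}$, the extensions agree on $\mathbf{B}$; for any $d\in\mathbf{D}=\widehat{Dom}_{\mathbf{A}}\mathbf{B}$ the dominion property then forces $\hat{h}(d)=\hat{k}(d)$, i.e., $\rho(h(d))=\rho(k(d))$ in $\mathbf{G}$.

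The main obstacle will be the final inference from $\rho(h(d))=\rho(k(d))$ to $h(d)=k(d)$: this requires the canonical map $\rho:\mathbf{E}\to\mathbf{G}$ to be injective on the relevant pair, an amalgamation-type property that does not hold automatically in an arbitrary variety of ordered algebras. This is precisely where the groundwork of Sections \ref{SecOTA} and \ref{SecAmal} pays off: in the varieties $\mathcal{F}$-\textbf{Oalg}$_{\leq }^{0}$ of primary interest to this paper, the explicit description of pushouts via the ordered term algebra in Theorem \ref{Pushout 1}, together with the term-rewriting techniques of Section \ref{SecOTA}, supplies enough control to secure the required injectivity and thereby complete the proof.
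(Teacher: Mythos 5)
Your left-to-right half ($\mathcal{V}$ closed $\Rightarrow$ epis surjective) is exactly the paper's argument: $f$ epi gives $\widehat{Dom}_{\mathbf{C}}\,\mathbf{Im}(f)=\mathbf{C}$, closedness gives $\widehat{Dom}_{\mathbf{C}}\,\mathbf{Im}(f)=\mathbf{Im}(f)$, done. The problem is the other half. Your strategy there is the same as the paper's --- the candidate non-surjective epimorphism is the inclusion $j:\mathbf{B}\hookrightarrow\widehat{Dom}_{\mathbf{A}}\mathbf{B}$ --- but the whole content of this direction is precisely the claim that $j$ is an epimorphism, and your proposal does not establish it. You reduce it, via pushout-extensions of $h,k:\mathbf{D}\to\mathbf{E}$ along $\mathbf{D}\hookrightarrow\mathbf{A}$, to the injectivity of the canonical map $\rho:\mathbf{E}\to\mathbf{G}$, and then you concede that this amalgamation-type property is not available in an arbitrary variety and gesture at Sections 3--4. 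That deferral cannot close the gap: (i) the proposition is asserted for \emph{every} variety of ordered algebras, not only for $\mathcal{F}$-\textbf{Oalg}$_{\leq}^{0}$, so an argument that leans on the pushout description of Theorem \ref{Pushout 1} proves at best a special case; (ii) even within $\mathcal{F}$-\textbf{Oalg}$_{\leq}^{0}$, what the paper eventually proves (Theorem \ref{main}) is the \emph{special} amalgamation property, i.e.\ embeddability of amalgams of two isomorphic copies of one algebra over $\mathbf{C}$ --- it does not give injectivity of $\mathbf{E}\to\mathbf{G}$ for an arbitrary codomain $\mathbf{E}$, which is what your construction needs; and (iii) in the paper's logical architecture this proposition is an ingredient feeding into Corollary \ref{Cor2} and is proved independently of the amalgamation machinery, so importing that machinery here is both out of order and insufficient.

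For comparison, the paper's proof of this direction is a one-line observation: given $\mathbf{C}\leq\mathbf{A}$, the embedding $\mathbf{C}\hookrightarrow\widehat{Dom}_{\mathbf{A}}\mathbf{C}$ is an epimorphism in $\mathcal{V}$, hence surjective by hypothesis, hence $\widehat{Dom}_{\mathbf{A}}\mathbf{C}=\mathbf{C}$; no pushout construction is invoked at all. So the concrete missing piece in your write-up is a proof (or at least a citation-level justification, at the same level of generality as the statement) that the dominion inclusion $\mathbf{B}\hookrightarrow\widehat{Dom}_{\mathbf{A}}\mathbf{B}$ is an epimorphism --- note that this does not follow formally from the definition of the dominion, since the dominion is tested only against pairs of homomorphisms defined on all of $\mathbf{A}$, while epi-ness of $j$ is tested against pairs defined merely on $\widehat{Dom}_{\mathbf{A}}\mathbf{B}$; your pushout detour was an attempt to bridge exactly this, and as you yourself observe, it stalls. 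Either supply a direct argument for that claim or follow the paper in asserting it, but as written the proposal proves only the easy implication.
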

\begin{proof}
($\Longrightarrow $) Let epis be surjective in $\mathcal{V}$. Let $\mathbf{C}$ be a subalgebra of an ordered algebra $\mathbf{A}\in \mathcal{V}$.
Consider the embedding $\chi  :\mathbf{C}\hookrightarrow \widehat{\text{Dom}}_{\mathbf{A}} \mathbf{C}$, that is clealry an epi and hence
surjective. But then $\widehat{\text{Dom}}_{\mathbf{A}}\mathbf{C}=\mathbf{C}$, as required.

($\Longleftarrow $) Let $\mathcal{V}$ be closed. If $f:\mathbf{A}%
\longrightarrow \mathbf{B}$ is an epi in $\mathcal{V}$, then $\widehat{%
\text{Dom}}_{\mathbf{B}} \,Im f =\mathbf{B}$. But,
because $\mathcal{V}$ is closed, we also have $\widehat{\text{Dom}}_{\mathbf{B}}\, Im f =\, Im f$. So, $\,Im f=\mathbf{B}$.
\end{proof}

\begin{proposition}
Let $\mathcal{V}$ be a variety of ordered algebras. Then $\mathcal{V}$ is
closed iff $\mathcal{V}$ has the special amalgamation property.
\end{proposition}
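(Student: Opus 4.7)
The plan is to derive both directions from the equivalence (1) $\Leftrightarrow$ (4) in Corollary \ref{cor4Eq}, which links embeddability of a special amalgam $(\mathbf{C};\mathbf{A}_{1},\mathbf{A}_{2};\phi_{1},\phi_{2})$ to closedness of $\mathbf{C}$ in the ambient $\mathbf{A}$ from Proposition \ref{PropDom}. The bridge between the two formulations is the observation that any abstract special amalgam can be presented in the Proposition \ref{PropDom} form by taking $\mathbf{A}=\mathbf{A}_{1}$, $\alpha_{1}=\mathrm{id}_{\mathbf{A}_{1}}$, $\alpha_{2}=\nu$, and identifying $\mathbf{C}$ with the subalgebra $\phi_{1}(\mathbf{C})\subseteq\mathbf{A}_{1}$; conversely, every setup as in Proposition \ref{PropDom} is itself a special amalgam.

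For the forward direction, I would take an arbitrary special amalgam in $\mathcal{V}$, rewrite it in the Proposition \ref{PropDom} form as above, and use the assumption that $\mathbf{C}\cong\phi_{1}(\mathbf{C})\in\mathcal{V}$ is absolutely closed to conclude $\widehat{\text{Dom}}_{\mathbf{A}_{1}}\phi_{1}(\mathbf{C})=\phi_{1}(\mathbf{C})$; the implication (4) $\Rightarrow$ (1) of Corollary \ref{cor4Eq} then yields embeddability. For the converse, I would start with any $\mathbf{C}\in\mathcal{V}$ and an arbitrary order-embedding $\mathbf{C}\hookrightarrow\mathbf{A}$ with $\mathbf{A}\in\mathcal{V}$, and construct the associated special amalgam from two disjoint isomorphic copies of $\mathbf{A}$ exactly as in Figure \ref{spalg}. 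Invoking the special amalgamation property to get embeddability and then applying (1) $\Rightarrow$ (4) of Corollary \ref{cor4Eq}, I would deduce that $\mathbf{C}$ is closed in $\mathbf{A}$, i.e., $\widehat{\text{Dom}}_{\mathbf{A}}\mathbf{C}=\mathbf{C}$. Since $\mathbf{A}$ and the embedding were arbitrary, $\mathbf{C}$ is absolutely closed, and as $\mathbf{C}$ was arbitrary, $\mathcal{V}$ is closed.

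The only mildly delicate point is the cosmetic matching of an abstract special amalgam with the concrete subalgebra-plus-isomorphisms data used in Proposition \ref{PropDom}. Once that identification is in place, both implications reduce to a direct invocation of Corollary \ref{cor4Eq}, so I do not foresee any technical obstacle.
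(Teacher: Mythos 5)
Your proposal is correct and follows essentially the paper's own route: both directions are obtained by passing between an abstract special amalgam and the concrete subalgebra-plus-isomorphic-copies presentation of Figure \ref{spalg} and then invoking the equivalences of Corollary \ref{cor4Eq} (which rest on Proposition \ref{PropDom}). The only difference is bookkeeping: you enter the corollary at the clause $(1)\Leftrightarrow(4)$, which lets you skip the explicit dominion-transfer computation $\widehat{\text{Dom}}_{\mathbf{A}}\mathbf{C}\subseteq\mathbf{C}$ that the paper repeats in the converse direction, and the disjointness issue in identifying $\mathbf{C}$ with $\phi_{1}(\mathbf{C})\subseteq\mathbf{A}_{1}$ is, as you note, purely cosmetic since the pushout data $\mathsf{H}'$ is unchanged.
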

\begin{proof}
($\Longrightarrow $) Let $\mathcal{V}$ be closed, and let $(\mathbf{C};%
\mathbf{A}_{1},\mathbf{A}_{2})$ be a special amalgam in $\mathcal{V}$.
Then 
\begin{alignat}{2}
\mu _{i}^{-1}\left[ \mu _{1}\left( \mathbf{A}_{1}\right) \cap \mu
_{2}\left( \mathbf{A}_{2}\right) \right] &= \widehat{\text{Dom}}_{\mathbf{A
}_{i}} \phi _{i}\left( \mathbf{C}\right),& &\quad  \text{by Proposition \ref{PropDom}} \\
&=\phi _{i}\left( \mathbf{C}\right),&  & \quad \text{because }\mathcal{V}\text{
is closed.}
\end{alignat}
This implies, by Corollary \ref{cor4Eq}, that $(\mathbf{C};\mathbf{A}_{1},\mathbf{A}_{2})$ is
embeddable.

($\Longleftarrow $) On the other hand, suppose $\mathcal{V}$ has the special
amalgamation property. Let $\mathbf{C}$ be a subalgebra of an ordered
algebra $\mathbf{A}$ in $\mathcal{V}$, giving rise to a special amalgam $(%
\mathbf{C};\mathbf{A}_{1},\mathbf{A}_{2})$ in $\mathcal{V}$ as described in Figure \ref{spalg}. Then, first observe that,
\begin{alignat}{2}
\widehat{\text{Dom}}_{\mathbf{A}_{i}} \phi _{i}\left( \mathbf{C}
\right)  &=\mu _{i}^{-1}\left[ \mu _{1}\left( \mathbf{A}_{1}\right)
\cap \mu _{2}\left( \mathbf{A}_{2}\right) \right],  & &\quad \text{by Proposition \ref{PropDom}} \\
&=\phi _{i}\left( \mathbf{C}\right), & &\quad \text{by Corollary \ref{cor4Eq}. }
\end{alignat}

Now, suppose $x\in \widehat{\text{Dom}}_{\mathbf{A}} \mathbf{C} $%
. Let $f,g:\mathbf{A}_{i}\longrightarrow \mathbf{B}$ be such that $%
f\left\vert _{\phi _{i}\left( \mathbf{C}\right) }\right. =g\left\vert
_{\phi _{i}\left( \mathbf{C}\right) }\right. $, i.e. $f\circ \phi
_{i}\left( y\right) =g\circ \phi _{i}\left( y\right) $, for all $y\in 
\mathbf{C}$. This implies that 
$f\circ \alpha _{i}(y)=f\circ \phi_{i}\left( y\right) =g\circ \phi _{i}\left( y\right) =g\circ \alpha _{i}(y).$
Because $x\in \widehat{\text{Dom}}_{\mathbf{A}}\mathbf{C}
 \mathbf{\ }$we have 
 $f\left( \alpha _{i}\left( x\right) \right)=g\left( \alpha _{i}\left(x\right) \right) $. 
 Thus $x\in \widehat{\text{Dom}}_{\mathbf{A}}
\mathbf{C}$ implies that $\alpha _{i}(x)\in \widehat{\text{Dom}}_{%
\mathbf{A}_{i}} \phi _{i}\left( \mathbf{C}\right) =\phi
_{i}\left( \mathbf{C}\right) $, whence $x\in \mathbf{C}$, and we conclude
that $\widehat{\text{Dom}}_{\mathbf{A}} \mathbf{C} \subseteq 
\mathbf{C}$.
\end{proof}

The next result is a straightforward consequence of the previous propositions.

\begin{corollary}\label{Cor2}
Let $\mathcal{V}$ be a variety of ordered algebras. Then epis are surjective
in $\mathcal{V}$ iff $\mathcal{V}$ has the special amalgamation property.
\end{corollary}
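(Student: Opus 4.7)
The plan is extremely short, since this is a direct chaining of the two equivalences established immediately above. Specifically, the first of the two preceding propositions shows that epimorphisms are surjective in $\mathcal{V}$ if and only if $\mathcal{V}$ is closed, while the second shows that $\mathcal{V}$ is closed if and only if $\mathcal{V}$ has the special amalgamation property. Composing these two biconditionals yields the desired equivalence.

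Concretely, I would simply write: assume first that epis are surjective in $\mathcal{V}$; by the first proposition, $\mathcal{V}$ is closed; by the second proposition, $\mathcal{V}$ has the special amalgamation property. Conversely, if $\mathcal{V}$ has the special amalgamation property, then by the second proposition $\mathcal{V}$ is closed, and hence by the first proposition epis are surjective in $\mathcal{V}$.

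There is no real obstacle here, since the heavy lifting (translating closure into dominion language via Corollary \ref{cor4Eq} and Proposition \ref{PropDom}, and relating dominions to epis) was already done in the two preceding propositions. The only thing one might want to add for clarity is an explicit mention that the notion \emph{closed variety} serves as the intermediary between the categorical property (surjective epis) and the amalgamation property, but no new calculation is needed.
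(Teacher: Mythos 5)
Your proposal is correct and coincides with the paper's argument: the corollary is obtained exactly by chaining the two preceding propositions (epis surjective $\Leftrightarrow$ $\mathcal{V}$ closed, and $\mathcal{V}$ closed $\Leftrightarrow$ special amalgamation property), with closedness as the intermediary. No further detail is needed.
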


\section{Epis of ordered algebras} \label{SecEpis}

Our aim in this section is to prove that the varieties $\mathcal{F}$-\textbf{Oalg}$_{\leq
}^{0}$ have the special amalgamation property. We, however, first need to
prove few technical results. 
Let $\mathbf{A}_{i}$ for $i=1,2$ be ordered $\mathcal{F}$-algebras such that 
$A_1\cap A_2=\emptyset.$ As before, we consider the poset $(X,\leq _{X})$ where $X=A_1\dot{\cup} A_2$ and 
$\leq _{X}~=~\leq _{A_1}\dot{\cup}\leq _{A_2}$. In Section \ref{SecAmal} we defined the relations $\mathsf{R}_{A_{i}}$, for $i=1, 2$, on $T(X)$ by,
\[
\mathsf{R}_{A_i}=\left\{ \left( t(x_{1},\ldots ,x_{k}),t^{\mathbf{A}_i}(x_{1},\ldots ,x_{k})\right) \,|\,t(x_{1},\ldots ,x_{k})\in T(A_i)\right\}, 
\]
where $t^{\mathbf{A}_i}$ is the term function induced by
the term $t$ on $\mathbf{A}_i$.  
In the following Lemma we prove an important feature of the relation $\overset{\mathsf{R}_{A_{i}}^{-1}}{\longrightarrow}$ on $T(X)$.

\begin{lemma}
\label{Lemma 3} A scheme $p_{1}(u_{1})\overset{\mathsf{R}_{A_{i}}^{-1}}{\longrightarrow }p_{1}(v_{1})\preccurlyeq p_{2}(v_{2})$, for $i\in \{1,2\}$,
can always be rewritten as \[p_{1}(u_{1})\preccurlyeq p_{2}(u_{2})\overset{\mathsf{R}_{A_{i}}^{-1}}{\longrightarrow }p_{2}(v_{2}).\]
\end{lemma}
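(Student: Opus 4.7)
The plan is to reparse the combined term $p_2(v_2)$ around a different hole so that the new hole lies at the position occupied by $v_1$'s subtree in the common skeleton of $p_1(v_1)$ and $p_2(v_2)$. Recall from the discussion following the theorem that $\Sigma_{\leq}\cap\Sigma_{\leq}^{-1}=\triangle_{T(X)}$ that, for $s,t\in T(X)$, $s\preccurlyeq t$ iff $\mathrm{skelt}(s)=\mathrm{skelt}(t)$ and the leaves compare componentwise in $\leq_X = \leq_{A_1}\dot\cup\leq_{A_2}$. Applied to $p_1(v_1)\preccurlyeq p_2(v_2)$, this yields a common skeleton in which $p_1$'s hole corresponds to a subtree of shape $\mathrm{skelt}(v_1)$; I would identify the subtree $w$ of $p_2(v_2)$ sitting at this same position.

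The first key step is to argue $w\in T(A_i)$. Since $v_1\in T(A_i)$, every leaf of $v_1$ lies in $A_i$, and because $\leq_X$ is a disjoint sum, the corresponding leaves of $w$ (which pointwise dominate those of $v_1$) must also all belong to $A_i$. I expect this to be the main substantive point of the lemma, since without the disjointness of $\leq_X$ the rewrite would fail: $w$ could contain elements from the other algebra and would not evaluate in $\mathbf{A}_i$.

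I would then take $q$ to be the $X\cup\mathcal{F}_0$-cfr translation whose outer term is obtained from $p_2(v_2)$ by collapsing $w$'s subtree to the hole variable, and set $u_2:=w^{\mathbf{A}_i}$. By construction $q(w)=p_2(v_2)$ and $(u_2,w)\in\mathsf{R}_{A_i}^{-1}$, giving the arrow $q(u_2)\overset{\mathsf{R}_{A_i}^{-1}}{\longrightarrow}p_2(v_2)$ of the rewritten scheme (so the conclusion's labels $p_2,v_2$ stand for $q,w$). The remaining inequality $p_1(u_1)\preccurlyeq q(u_2)$ follows because outside the collapsed position the two terms inherit their matching skeleton and leaf-wise comparison from $p_1(v_1)\preccurlyeq p_2(v_2)$, while at the collapsed position both are single leaves in $A_i$, and $u_1\leq_{A_i}u_2$ is obtained by writing $v_1=\overline{t}(a_1,\ldots,a_n)$ and $w=\overline{t}(b_1,\ldots,b_n)$ for a common constant-free regular term $\overline{t}$ (via Lemma~\ref{t=regt} on their shared skeleton) with $a_j\leq_{A_i}b_j$, and invoking monotonicity of $\overline{t}^{\mathbf{A}_i}$.
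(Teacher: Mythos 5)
Your proposal is correct and takes essentially the same route as the paper: the paper also uses the leaf-wise characterization of $\preccurlyeq$ together with Lemma \ref{t=regt} to exhibit the subterm of $p_{2}(v_{2})$ sitting over $v_{1}$ (your $w$, the paper's $\overline{t}(y_{l},\ldots ,y_{l+s-1})$), evaluates it in $\mathbf{A}_{i}$ to obtain $u_{2}$, and gets $p_{1}(u_{1})\preccurlyeq p_{2}(u_{2})$ by monotonicity, with $(u_{2},v_{2})\in \mathsf{R}_{A_{i}}^{-1}$ by construction. The only cosmetic difference is that the paper writes the leaf indexings explicitly and allows leaves of $v_{1}$ to be constant symbols as well as elements of $A_{i}$, a case your disjointness-plus-monotonicity argument handles in exactly the same way.
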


\begin{proof}
We shall suppose that $i=1$. The case $i=2$ is handled similarly. By the definition of the relation $\mathsf{R}_{A_{1}}$, we have 
$p_{1}(u_{1})\overset{\mathsf{R}_{A_{1}}^{-1}}{\longrightarrow }p_{1}(v_{1})$ iff $(u_{1}, v_{1})\in \mathsf{R}_{A_{1}}^{-1}$ and $p_1$ is an 
$X\cup \mathcal{F}_0$ - cfr translation. This means that 
\[(u_{1}, v_{1}) = (t^{\mathbf{A}_{1}}(a_{1},\ldots , a_{k}), t(a_{1},\ldots , a_{k})),\]
where $t(a_{1},\ldots , a_{k})\in T(A_1)$, and 
\[ p_1(u) = t_{1}(x_{1},\ldots,x_{l-1}, u, x_{l+1},\ldots ,x_{n}),\]
where the $n$-ary term $t_{1}(z_{1},\ldots ,z_{n})\in T_{\mathcal{F}}^{reg}(V)$ is constant free, and 
$x_{1},\ldots,x_{l-1}, x_{l+1},\ldots ,x_{n}\in X\cup \mathcal{F}_{0}$.
Without losing generality we can assume that $var(t)=(a_{1},\ldots , a_{k})$ and that $Tree(t)$ has $s$ leaves. 
Then by Lemma \ref{t=regt} there exist a constant free regular term $\overline{t}\left( z_{1},\ldots ,z_{s}\right)$ such that
\[
t\left( a_{1},\ldots , a_{k}\right) = \overline{t}\left( \overline{a}_{1},\ldots ,\overline{a}_{s}\right) 
\text{,}
\]
where $\overline{a}_{i} \in \{a_{1}, \ldots, a_{k}\} \cup \mathcal{F}_{0}$ for $i=1, \ldots, s$.
One may therefore write:
\begin{eqnarray*}
p_{1}(u_{1}) &=& t_{1}(x_{1},\dots ,x_{l-1}, \overline{t}^{\mathbf{A}_{1}}\left( \overline{a}_{1},\ldots ,\overline{a}_{s}\right), 
x_{l+1},\ldots , x_{n})\text{,} \\
p_{1}(v_{1}) &=& t_{1}(x_{1},\dots ,x_{l-1}, \overline{t}\left( \overline{a}_{1},\ldots ,\overline{a}_{s}\right), 
x_{l+1},\ldots , x_{n})
\text{.}
\end{eqnarray*}
The tree of the term $t_{1}(x_{1},\dots ,x_{l-1}, \overline{t}\left( \overline{a}_{1},\ldots ,\overline{a}_{s}\right), 
x_{l+1},\ldots ,x_{n})$ has $n+s-1$ leaves, where
 \[leaf(p_{1}(v_{1}))[i]=\begin{cases}
            x_i               &,\; \text{if}\,\, 1\leq i\leq l-1  \\
            \overline{a}_{i-l+1}     &,\; \text{if}\,\,  l\leq i\leq l+s-1 \\
            x_{i-s+1}    &,\; \text{if}\,\, l+s\leq i\leq n+s-1.
          \end{cases}
 \]         
Next, using the definition of $\preccurlyeq $, we may assert that
\begin{equation*}
p_{2}(v_{2})=t_{1}(y_{1},\dots ,y_{l-1}, \overline{t}\left( y_{l},\ldots , y_{l+s-1}\right), 
y_{l+s},\ldots ,y_{n+s-1})
\text{,}
\end{equation*}%
where $y_{1},\ldots , y_{n+s-1}\in X\cup \mathcal{F}_{0}$ and 
\begin{eqnarray}
x_i \leq_{X} y_i  \quad &\text{for}& \quad 1\leq i\leq l-1  \\
\overline{a}_{i-l+1} \leq_{A_{1}} y_i \quad &\text{for}& \quad l\leq i\leq l+s-1  \label{leafa} \\
x_{i-s+1} \leq_{X} y_i \quad &\text{for}& \quad  l+s\leq i\leq n+s-1.
\end{eqnarray}
The inequalities (\ref{leafa}) imply
\begin{equation*}
\overline{t}^{\mathbf{A}_{1}}(\overline{a}_{1},\ldots ,\overline{a}_{s})\leq _{A_{1}}
\overline{t}^{\mathbf{A}_{1}}(y_{l},\ldots , y_{l+s-1}).
\end{equation*}
So we can write:
\begin{eqnarray*}
p_{1}(u_{1}) &=&
t_{1}(x_{1},\dots ,x_{l-1}, \overline{t}^{\mathbf{A}_{1}}\left( \overline{a}_{1},\ldots ,\overline{a}_{s}\right), 
x_{l+1},\ldots , x_{n}) \\
                 &\preccurlyeq &t_{1}(y_{1},\dots ,y_{l-1}, \overline{t}^{\mathbf{A}_{1}}(y_{l},\ldots , y_{l+s-1}), 
y_{l+s},\ldots , y_{n+s-1})=p_2(u_2)\\
                 &\overset{\mathsf{R}_{A_{1}}^{-1}}{\longrightarrow }& 
 t_{1}(y_{1},\dots ,y_{l-1}, \overline{t}(y_{l},\ldots , y_{l+s-1}), y_{l+s},\ldots , y_{n+s-1})=p_2(v_2)\text{,}
\end{eqnarray*}
where $u_{2}=\overline{t}^{\mathbf{A}_{1}}(y_{l},\ldots , y_{l+s-1})$, $v_{2}=\overline{t}(y_{l},\ldots , y_{l+s-1})$ and 
$p_2(u) = t_{1}(y_{1},\ldots, y_{l-1}, u, y_{l+1},\ldots ,y_{n}).$ 
This completes the proof.
\end{proof}

\noindent One can also prove the following lemma in a dual manner.

\begin{lemma}
\label{Lemma 4}A scheme  $p_{1}(u_{1})\preccurlyeq p_{2}(u_{2})\overset{%
\mathsf{R}_{A_{i}}}{\longrightarrow }p_{2}(v_{2})$, $i\in \{1,2\}$, can always be
rewritten as \[p_{1}(u_{1})\overset{\mathsf{R}_{A_{i}}}{\longrightarrow }%
p_{1}(v_{1})\preccurlyeq p_{2}(v_{2}).\]
\end{lemma}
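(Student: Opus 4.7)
My plan is to mirror the proof of Lemma \ref{Lemma 3}, using that $\preccurlyeq$ depends only on skeletons and leaf-order. First I unpack the scheme. Since $(u_{2},v_{2})\in \mathsf{R}_{A_{i}}$ we have $u_{2}=t(a_{1},\ldots ,a_{k})\in T(A_{i})$ and $v_{2}=t^{\mathbf{A}_{i}}(a_{1},\ldots ,a_{k})$; applying Lemma \ref{t=regt} I rewrite $t$ as a constant-free regular term $\overline{t}(z_{1},\ldots ,z_{s})$, so $u_{2}=\overline{t}(\overline{a}_{1},\ldots ,\overline{a}_{s})$ with each $\overline{a}_{j}\in A_{i}\cup \mathcal{F}_{0}$, and correspondingly $v_{2}=\overline{t}^{\mathbf{A}_{i}}(\overline{a}_{1},\ldots ,\overline{a}_{s})$. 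Writing $p_{2}$ in the form $t_{1}(y_{1},\ldots ,y_{l-1},\_,y_{l+1},\ldots ,y_{n})$ with $t_{1}$ a constant-free regular term yields an explicit description of $p_{2}(u_{2})$.

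The main step is to read off a matching decomposition of $p_{1}(u_{1})$ from the inequality $p_{1}(u_{1})\preccurlyeq p_{2}(u_{2})$. Since these two terms share a skeleton, the subtree of $p_{1}(u_{1})$ occupying the positions held by $\overline{t}(\overline{a}_{1},\ldots ,\overline{a}_{s})$ inside $p_{2}(u_{2})$ must itself be $\overline{t}(b_{1},\ldots ,b_{s})$ with $b_{j}\leq \overline{a}_{j}$ for each $j$. Because $\leq {}={}\leq _{A_{1}}\dot{\cup }\leq _{A_{2}}\dot{\cup }\leq _{\mathcal{F}_{0}}$ is a disjoint union that forbids cross-comparisons, each $b_{j}$ lies in $A_{i}\cup \mathcal{F}_{0}$; the outer leaves likewise satisfy $x_{j}\leq y_{j}$. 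Hence $p_{1}(u_{1})=t_{1}(x_{1},\ldots ,x_{l-1},\overline{t}(b_{1},\ldots ,b_{s}),x_{l+1},\ldots ,x_{n})$.

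Now set $v_{1}=\overline{t}^{\mathbf{A}_{i}}(b_{1},\ldots ,b_{s})$ and take the cfr translation $p_{1}'(\_)=t_{1}(x_{1},\ldots ,x_{l-1},\_,x_{l+1},\ldots ,x_{n})$. Since $\overline{t}(b_{1},\ldots ,b_{s})\in T(A_{i})$, the pair $(\overline{t}(b_{1},\ldots ,b_{s}),v_{1})$ lies in $\mathsf{R}_{A_{i}}$, so $p_{1}(u_{1})=p_{1}'(\overline{t}(b_{1},\ldots ,b_{s}))\overset{\mathsf{R}_{A_{i}}}{\longrightarrow }p_{1}'(v_{1})$, which I relabel as $p_{1}(v_{1})$ for the restated scheme. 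It remains to verify $p_{1}'(v_{1})\preccurlyeq p_{2}(v_{2})$: both sides share the skeleton of $t_{1}$; the non-$l$ leaves satisfy $x_{j}\leq y_{j}$ by construction; and at the $l$-th leaf monotonicity of $\overline{t}^{\mathbf{A}_{i}}$ yields $v_{1}\leq _{A_{i}}\overline{t}^{\mathbf{A}_{i}}(\overline{a}_{1},\ldots ,\overline{a}_{s})=v_{2}$, where the defining inequalities $c\leq d$ of $\mathcal{F}$-\textbf{Oalg}$_{\leq }^{0}$ dispose of any constant leaves $\overline{a}_{j}\in \mathcal{F}_{0}$.

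The one substantive point I expect is the decomposition in the second paragraph: identifying the hidden occurrence of $\overline{t}$ inside $p_{1}(u_{1})$ and pinning its leaves down to $A_{i}\cup \mathcal{F}_{0}$. This reduces to a tree-position bookkeeping argument that relies on nothing beyond the verbatim preservation of skeletons by $\preccurlyeq$ and the disjointness of the orders $\leq _{A_{1}}$ and $\leq _{A_{2}}$.
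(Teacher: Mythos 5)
Your proof is correct and matches the paper's intended argument: the paper proves Lemma \ref{Lemma 4} simply by dualizing the proof of Lemma \ref{Lemma 3}, and your write-up is exactly that mirror image (unpack the $\mathsf{R}_{A_{i}}$-pair via Lemma \ref{t=regt}, use the skeleton/leafwise description of $\preccurlyeq$ to re-parse the other endpoint, then evaluate the embedded subterm and invoke monotonicity, with the disjointness of $\leq_{A_{1}}$, $\leq_{A_{2}}$, $\leq_{\mathcal{F}_{0}}$ and the defining inequalities $c\leq d$ handling the leaf comparisons). No gaps worth noting.
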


\noindent Next, let us consider a scheme of the form,
\[
p_{1}(u_{1}^{\prime })\preccurlyeq p_{2}(u_{2})\overset{
\mathsf{H}^{\prime }\dot{\cup}\mathsf{H}^{\prime -1}}{\longrightarrow }
p_{2}(u_{2}^{\prime })\preccurlyeq \cdots \preccurlyeq p_{n-1}(u_{n-1})
\overset{\mathsf{H}^{\prime }\dot{\cup}\mathsf{H}^{\prime -1}}{
\longrightarrow }p_{n-1}(u_{n-1}^{\prime })\preccurlyeq p_{n}(u_{n})
\text{,}
\]
Note that the trees of $p_{i}(u_{i})$ and $p_{j}(u_{j}^{\prime })$, $2\leq
i\leq n$, $1\leq j\leq n-1$, have the same number of leaves, say $m$. This
allows a representation of the above sequence by a rectangular grid with $m$
columns $c_{i}$, $1\leq i\leq m$,
\begin{equation}
\begin{tabular}{l|llllll|}
\cline{2-7}
& $c_{1}$ & \multicolumn{1}{|l}{$c_{2}$} & \multicolumn{1}{|l}{} & 
\multicolumn{1}{|l}{$c_{\alpha }$} & \multicolumn{1}{|l}{} & 
\multicolumn{1}{|l|}{$c_{m}$} \\ \cline{2-7}
$\ \ \ \ \ \ p_{1}(u_{1}^{\prime })$ $\ \ :$ & $x_{11}^{\prime }$ & 
\multicolumn{1}{|l}{$x_{12}^{\prime }$} & \multicolumn{1}{|l}{$\cdots $} & 
\multicolumn{1}{|l}{$x_{1\alpha }^{\prime }$} & \multicolumn{1}{|l}{$\cdots $
} & \multicolumn{1}{|l|}{$x_{1m}^{\prime }$} \\ \cline{2-7}
$\ \ \ \ \ \ p_{2}(u_{2})$ $\ \ :$ & $x_{21}$ & \multicolumn{1}{|l}{$x_{22}$}
& \multicolumn{1}{|l}{$\cdots $} & \multicolumn{1}{|l}{$x_{2\alpha }$} & 
\multicolumn{1}{|l}{$\cdots $} & \multicolumn{1}{|l|}{$x_{2m}$} \\ 
\cline{2-7}
$\ \ \ \ \ \ p_{2}(u_{2}^{\prime })$ $\ \ :$ & $x_{21}^{\prime }$ & 
\multicolumn{1}{|l}{$x_{22}^{\prime }$} & \multicolumn{1}{|l}{$\cdots $} & 
\multicolumn{1}{|l}{$x_{2\alpha }^{\prime }$} & \multicolumn{1}{|l}{$\cdots $
} & \multicolumn{1}{|l|}{$x_{2m}^{\prime }$} \\ \cline{2-7}
&  &  &  & $\vdots $ &  &  \\ \cline{2-7}
$p_{n-1}(u_{n-1})$ $\ \ :$ \ \  & $x_{n-1,1}$ & \multicolumn{1}{|l}{$
x_{n-1,2}$} & \multicolumn{1}{|l}{$\cdots $} & \multicolumn{1}{|l}{$
x_{n-1,\alpha }$} & \multicolumn{1}{|l}{$\cdots $} & \multicolumn{1}{|l|}{$
x_{n-1,m}$} \\ \cline{2-7}
$p_{n-1}(u_{n-1}^{\prime })$ $\ \ :$ \ \  & $x_{n-1,1}^{\prime }$ & 
\multicolumn{1}{|l}{$x_{n-1,2}^{\prime }$} & \multicolumn{1}{|l}{$\cdots $}
& \multicolumn{1}{|l}{$x_{n-1,\alpha }^{\prime }$} & \multicolumn{1}{|l}{$
\cdots $} & \multicolumn{1}{|l|}{$x_{n-1,m}^{\prime }$} \\ \cline{2-7}
$\quad \,\ \ p_{n}(u_{n})$ $\ \ :$ & $x_{n1}$ & \multicolumn{1}{|l}{$x_{n2}$}
& \multicolumn{1}{|l}{$\cdots $} & \multicolumn{1}{|l}{$x_{n\alpha }$} & 
\multicolumn{1}{|l}{$\cdots $} & \multicolumn{1}{|l|}{$x_{nm}$} \\ 
\cline{2-7}
\end{tabular}
\text{ ,}
\label{grid1}
\end{equation}%
where the sequences $(x_{i1}^{\prime },\ldots ,x_{im}^{\prime })$, $1\leq
i\leq n-1$, and $(x_{j1},\ldots ,x_{jm})$, $2\leq j\leq n$, that form rows of the above grid, 
give the labeling (first to last, left to right) of the leaves of 
Tree$(p_{i}(u_{i}^{\prime}))$ and Tree$(p_{j}(u_{j}^{\prime }))$ respectively. 
We must also have $u_{i}^{\prime }=t_{i}^{\prime }( x_{ip_{i}}^{\prime },\ldots
,x_{iq_{i}}^{\prime }) $, for some $1\leq p_{i}\leq q_{i}\leq m$, and $u_{j}=t_{j}(x_{jp_{j}},\ldots ,x_{jq_{j}})$, for some $1\leq p_{j}\leq q_{j}\leq m$.

\begin{definition}
\upshape Let $u_{1}^{\prime }=t_{1}^{\prime }\left( x_{1p_{1}}^{\prime
},\ldots ,x_{1q_{1}}^{\prime }\right) $ and $u_{n}=t_{n}(x_{np_{n}},\ldots
,x_{nq_{n}})$. We say that $u_{n}$ covers $u_{1}^{\prime }$ if $p_{n}\leq
p_{1}$, $q_{1}\leq q_{n}$. Dually, one can say that $u_{1}^{\prime}$ covers 
$u_{n}$. If $p_{1}=p_{n}$, $q_{1}=q_{n}$, then $u_{1}^{\prime }$, $u_{n}$ are
said to cover each other properly.
\end{definition}

\begin{remark}
\upshape Because $p_{1}(u_{1}^{\prime })$ and $p_{n}(u_{n})$ have the same skeleton, we cannot have $u_{1}^{\prime}$ and $u_{n}$ partially overlap, i.e., 
the case $p_{1} < p_{n}$, $q_{1} < q_{n}$ and its dual cannot arise.
\end{remark}

\begin{definition}
\upshape Let the $k$-ary term $q(z_{1}, z_{2},\ldots z_{k})\in T_{\mathcal{F}}^{reg}(V)$ be constant free. Then we write 
\begin{equation*}
q(u_{1}, u_{2},\ldots u_{k})\overset{\mathsf{H}^{\prime }\cup \mathsf{H}^{\prime -1}\cup \mathsf{I}_{A_{1}\cup A_{2}}}{\rightrightarrows }
q(u_{1}^{\prime }, u_{2}^{\prime},\ldots u_{k}^{\prime })\text{,}
\end{equation*}
if $(u_{i}, u_{i}^{\prime })\in \mathsf{H}^{\prime }\dot{\cup}\mathsf{H}%
^{\prime -1}\dot{\cup}\mathsf{I}_{A_1\dot{\cup}A_2}$ for all $1\leq i\leq k$, where 
$\mathsf{I}_{A_1\dot{\cup}A_2}$ is the diagonal relation. 
\end{definition}

\begin{lemma}
\label{Lemma 5}Given a scheme 
\[
p_{1}(u_{1}^{\prime })\preccurlyeq p_{2}(u_{2})\overset{
\mathsf{H}^{\prime }\dot{\cup}\mathsf{H}^{\prime -1}}{\longrightarrow }
p_{2}(u_{2}^{\prime })\preccurlyeq \cdots \preccurlyeq p_{n-1}(u_{n-1})
\overset{\mathsf{H}^{\prime }\dot{\cup}\mathsf{H}^{\prime -1}}{
\longrightarrow }p_{n-1}(u_{n-1}^{\prime })\preccurlyeq p_{n}(u_{n})
\text{,}
\]
one may always rewrite it as
\begin{equation*}
 p_1(u_{1}^{\prime }) \preccurlyeq 
p_{2}^{\prime }(u_{2})\overset{\mathsf{H}^{\prime }\dot{\cup
}\mathsf{H}^{\prime -1}\dot{\cup}\mathsf{I}_{A_1\dot{\cup}A_2}}{
\rightrightarrows }p_{n-1}^{\prime }(u_{n-1}^{\prime })
\preccurlyeq p_{n}(u_{n})\text{.}
\end{equation*}
\end{lemma}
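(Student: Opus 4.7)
The plan is to work column-by-column on the grid (\ref{grid1}) representing the given scheme. All terms in the scheme share the common skeleton of $p_1(u_1^\prime)$, so each has the same number $m$ of leaves, and the columns index these leaf positions. For each column $j \in \{1,\ldots,m\}$ I will produce labels $y_j, y_j^\prime$ satisfying $x_{1j}^\prime \leq y_j$, $y_j^\prime \leq x_{nj}$, and $(y_j, y_j^\prime) \in \mathsf{H}^\prime \dot{\cup} \mathsf{H}^{\prime -1} \dot{\cup} \mathsf{I}_{A_1 \dot{\cup} A_2}$; pasting these labels onto the common skeleton will yield the two terms $p_2^\prime(u_2)$ and $p_{n-1}^\prime(u_{n-1}^\prime)$ required in the output scheme.

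The key structural fact is that $\leq_X = \leq_{A_1} \dot{\cup} \leq_{A_2}$ is a disjoint union of orders, so any $\leq$-chain in a single column must stay entirely within one of $A_1, A_2$. In a column $j$ with no active arrow, transitivity therefore gives $x_{1j}^\prime \leq x_{nj}$ inside one side, and $y_j = y_j^\prime = x_{nj} \in \mathsf{I}_{A_1 \dot{\cup} A_2}$ works. When $j$ contains arrows at rows $i_1 < \cdots < i_k$, the matched pairs are $(\phi_{s_l}(c_l), \phi_{3-s_l}(c_l))$ with $s_l \in \{1,2\}$; because each intermediate $\leq$-segment must live on one side, $s_{l+1} = 3 - s_l$, so the arrows alternate direction. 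The intermediate inequalities $\phi_{s_{l+1}}(c_l) \leq_{A_{s_{l+1}}} \phi_{s_{l+1}}(c_{l+1})$, together with the fact that $\phi_1, \phi_2$ are order-embeddings (and hence reflect $\leq$), extract a chain $c_1 \leq_C \cdots \leq_C c_k$ in $\mathbf{C}$.

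Monotonicity of $\phi_1, \phi_2$ then transports this chain to both sides, giving $\phi_i(c_1) \leq_{A_i} \phi_i(c_k)$ for $i = 1, 2$. Writing $s = s_1$, the column collapses depending on the parity of $k$: when $k$ is even, $x_{1j}^\prime$ and $x_{nj}$ both lie in $A_s$, and $y_j = y_j^\prime = \phi_s(c_1) \in \mathsf{I}_{A_1 \dot{\cup} A_2}$ works via $x_{1j}^\prime \leq \phi_s(c_1) \leq \phi_s(c_k) \leq x_{nj}$; when $k$ is odd, $x_{1j}^\prime \in A_s$ while $x_{nj} \in A_{3-s}$, and the choice $y_j = \phi_s(c_1)$, $y_j^\prime = \phi_{3-s}(c_1)$ puts $(y_j, y_j^\prime) \in \mathsf{H}^\prime \cup \mathsf{H}^{\prime -1}$ with $x_{1j}^\prime \leq y_j$ and $y_j^\prime \leq \phi_{3-s}(c_k) \leq x_{nj}$. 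Columns consisting entirely of $\mathcal{F}_0$-constants are handled trivially, since no arrow can act on them.

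The main obstacle is precisely the multi-arrow column analysis: first using the disjoint-union structure of $\leq_X$ to force the arrows to alternate direction, then invoking the order-embedding property of $\phi_1, \phi_2$ to compress the alternating sequence into a single $C$-chain, and finally matching the parity of $k$ against either a diagonal entry or a single $\mathsf{H}^\prime \cup \mathsf{H}^{\prime -1}$ jump. The no-arrow and single-arrow subcases are immediate specializations, and assembling the column-wise choices into two terms sharing the common skeleton is routine.
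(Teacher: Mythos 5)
Your argument is correct, and it shares the paper's overall skeleton---represent the scheme by the leaf grid (\ref{grid1}) and contract it column by column, splitting on the number (parity) of $\mathsf{H}^{\prime}\cup\mathsf{H}^{\prime-1}$-arrows occurring in a column---but the device you use to collapse a column is genuinely different. The paper's proof invokes the special-amalgam isomorphism $\nu\colon\mathbf{A}_{1}\longrightarrow\mathbf{A}_{2}$ with $\phi_{2}=\nu\circ\phi_{1}$ and applies $\nu^{\pm1}$ ``across the inequalities,'' folding entire $A_{2}$-segments of the column into $A_{1}$ (cases (ii) and (iii) of its proof), in the odd case retaining the pair coming from the \emph{last} arrow. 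You never use $\nu$: you note that disjointness of $A_{1}$ and $A_{2}$ forces the arrows in a column to alternate, pull each inequality between consecutive arrow endpoints back into $\mathbf{C}$ via the order-embedding (hence order-reflecting) property of $\phi_{1},\phi_{2}$, obtain a chain $c_{1}\leq_{C}\cdots\leq_{C}c_{k}$, and re-inject $c_{1}$ on the side(s) dictated by the parity of $k$. What your route buys is a proof that works for an \emph{arbitrary} amalgam $(\mathbf{C};\mathbf{A}_{1},\mathbf{A}_{2};\phi_{1},\phi_{2})$, since only the order-embedding property of the $\phi_{i}$ is used, whereas the paper's proof as written is tied to the special amalgam of Figure \ref{spalg} (which is all that Theorem \ref{main} needs); the paper's $\nu$-transfer is in exchange more mechanical, rewriting whole segments rather than isolating the single inequality between arrow endpoints. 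One shared and harmless imprecision: leaves labelled by constants from $\mathcal{F}_{0}$ fall formally outside $\mathsf{I}_{A_{1}\dot{\cup}A_{2}}$, in your write-up exactly as in the paper's case (i); such columns carry no arrows and are disposed of by leaving their labels untouched.
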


\begin{proof}
Considering a column $c_{\alpha}$, $1\leq \alpha \leq m$, in grid (\ref
{grid1}), we have three possibilities.

\noindent (i) If $(x_{j\alpha },x_{j\alpha }^{\prime })\in \mathsf{I}%
_{A_{1}\dot{\cup} A_{2}}$ for all $2\leq j\leq n-1$, then $x_{1\alpha }^{\prime
}\leq x_{n\alpha }$ and we can rewrite grid (\ref{grid1}) as%
\begin{equation}
\begin{tabular}{l|llllll|}
\cline{2-7}
& $c_{1}$ & \multicolumn{1}{|l}{$c_{2}$} & \multicolumn{1}{|l}{$\cdots $} & 
\multicolumn{1}{|l}{$c_{\alpha }$} & \multicolumn{1}{|l}{$\cdots $} & 
\multicolumn{1}{|l|}{$c_{m}$} \\ \cline{2-7}
$\ \ \ \ \ \ p_{1}(u_{1}^{\prime })$ $\ \ :$ & $x_{11}^{\prime }$ & 
\multicolumn{1}{|l}{$x_{12}^{\prime }$} & \multicolumn{1}{|l}{$\cdots $} & 
\multicolumn{1}{|l}{$x_{1\alpha }^{\prime }$} & \multicolumn{1}{|l}{$\cdots $%
} & \multicolumn{1}{|l|}{$x_{1m}^{\prime }$} \\ \cline{2-7}
$\ \ \ \ \ \ p_{2}^{\prime }(u_{2})$ $\ \ :$ & $x_{21}$ & 
\multicolumn{1}{|l}{$x_{22}$} & \multicolumn{1}{|l}{$\cdots $} & 
\multicolumn{1}{|l}{$x_{n\alpha }$} & \multicolumn{1}{|l}{$\cdots $} & 
\multicolumn{1}{|l|}{$x_{2m}$} \\ \cline{2-7}
$\ \ \ \ \ \ p_{2}^{\prime }(u_{2}^{\prime })$ $\ \ :$ & $x_{21}^{\prime }$
& \multicolumn{1}{|l}{$x_{22}^{\prime }$} & \multicolumn{1}{|l}{$\cdots $} & 
\multicolumn{1}{|l}{$x_{n\alpha }$} & \multicolumn{1}{|l}{$\cdots $} & 
\multicolumn{1}{|l|}{$x_{2m}^{\prime }$} \\ \cline{2-7}
&  &  &  & $\vdots $ &  &  \\ \cline{2-7}
$p_{n-1}^{\prime }(u_{n-1})$ $\ \ :$ \ \  & $x_{n-1,1}$ & 
\multicolumn{1}{|l}{$x_{n-1,2}$} & \multicolumn{1}{|l}{$\cdots $} & 
\multicolumn{1}{|l}{$x_{n\alpha }$} & \multicolumn{1}{|l}{$\cdots $} & 
\multicolumn{1}{|l|}{$x_{n-1,m}$} \\ \cline{2-7}
$p_{n-1}^{\prime }(u_{n-1}^{\prime })$ $\ \ :$ \ \  & $x_{n-1,1}^{\prime }$
& \multicolumn{1}{|l}{$x_{n-1,2}^{\prime }$} & \multicolumn{1}{|l}{$\cdots $}
& \multicolumn{1}{|l}{$x_{n\alpha }$} & \multicolumn{1}{|l}{$\cdots $} & 
\multicolumn{1}{|l|}{$x_{n-1,m}^{\prime }$} \\ \cline{2-7}
$\quad \,\ \ p_{n}(u_{n})$ $\ \ :$ & $x_{n1}$ & \multicolumn{1}{|l}{$x_{n2}$}
& \multicolumn{1}{|l}{$\cdots $} & \multicolumn{1}{|l}{$x_{n\alpha }$} & 
\multicolumn{1}{|l}{$\cdots $} & \multicolumn{1}{|l|}{$x_{nm}$} \\ 
\cline{2-7}
\end{tabular}
\text{ ,}
\label{grid2}
\end{equation}
where $p_{j}^{\prime }(u_{j})$ and $p_{j}^{\prime }(u_{j}^{\prime })$ in (\ref{grid2}), for $2\leq j\leq n-1$, are
the translations obtained from $p_{j}(u_{j})$ and $p_{j}(u_{j}^{\prime })$ in (\ref{grid1}) respectively, by relabeling the leaves
in column $c_{\alpha }$.\smallskip

\noindent (ii) If we have an even number of relations $x_{j\alpha }\overset{\mathsf{H}^{\prime }\cup \mathsf{H}
^{\prime -1}}{\longrightarrow }x_{j\alpha }^{\prime }$, $j=2,\ldots,n-1,$
in column $c_{\alpha }$, then both $x_{1\alpha }$ and $x_{n\alpha }$ in grid (\ref{grid1}) are in
the same $A_{i}$, $i\in \{1,2\}$. If $x_{1\alpha }$ and $%
x_{n\alpha }$ are in $A_{1}$, then we may apply $\nu ^{-1}$ across
the inequalities $x_{j\alpha }^{\prime }\leq _{A_2}x_{j+1,\alpha }$, where $
j\in \{2,\ldots ,n-1\}$. And if $x_{1\alpha }$ and $x_{n\alpha }$ are in $A_{2}$, then we can apply $\nu $ across the inequalities 
$x_{j\alpha }^{\prime }\leq _{A_1}x_{j+1,\alpha }$, $k\in \{2,\ldots ,n-1\}$.
As a result $c_{\alpha }$ is transformed into column that we considered in
case (i).

\noindent (iii) If we have an odd number of relations $x_{j\alpha }\overset{\mathsf{H}^{\prime }\cup \mathsf{H}
^{\prime -1}}{\longrightarrow }x_{j\alpha }^{\prime }$, $j=2,\ldots,n-1,$ in column $c_{\alpha }$, 
then $x_{1\alpha }$ and $x_{n\alpha }$ belong to different $A_{i}$, $i\in \{1,2\}$. Take
\[
k=\max \{j\in \mathbb{N} \,|\, (x_{j\alpha },x_{j\alpha }^{\prime })\in \mathsf{H}^{\prime }\cup \mathsf{H}^{\prime -1}\}\text{.}
\]
Now, if $(x_{k\alpha },x_{k\alpha }^{\prime })\in \mathsf{H}^{\prime }$,
then we apply $\nu^{-1}$ across the inequalities 
$x_{j\alpha }\leq_{A_2}x_{j+1,\alpha }^{\prime }$, $j< k$. By the argument used in case (i),
this gives $x_{1\alpha}^{\prime} \leq _{A_1} x_{k\alpha }\,\, \mathsf{H}^{\prime }\,\, x_{k\alpha }^{\prime } \leq _{A_2} x_{n\alpha }$. 
Dually, if $(x_{k\alpha
},x_{k\alpha }^{\prime })\in \mathsf{H}^{\prime -1}$, then applying $\nu $
across the inequalities $x_{j\alpha }^{\prime }\leq _{1}x_{j+1,\alpha }$, 
$ j < k$, we get $x_{1\alpha }^{\prime} \leq _{A_2} x_{k\alpha }\,\, \mathsf{H}^{\prime -1}\,\, x_{k\alpha }^{\prime } \leq _{A_1} x_{n\alpha }$. In either cases grid (\ref{grid1}) may be rewritten as
\begin{equation}
\begin{tabular}{l|llllll|}
\cline{2-7}
& $c_{1}$ & \multicolumn{1}{|l}{$c_{2}$} & \multicolumn{1}{|l}{$\cdots $} & 
\multicolumn{1}{|l}{$c_{\alpha }$} & \multicolumn{1}{|l}{$\cdots $} & 
\multicolumn{1}{|l|}{$c_{m}$} \\ \cline{2-7}
$\ \ \ \ \ \ p_{1}(u_{1}^{\prime })$ $\ \ :$ & $x_{11}^{\prime }$ & 
\multicolumn{1}{|l}{$x_{12}^{\prime }$} & \multicolumn{1}{|l}{$\cdots $} & 
\multicolumn{1}{|l}{$x_{1\alpha }^{\prime }$} & \multicolumn{1}{|l}{$\cdots $%
} & \multicolumn{1}{|l|}{$x_{1m}^{\prime }$} \\ \cline{2-7}
$\ \ \ \ \ \ p_{2}^{\prime }(u_{2})$ $\ \ :$ & $x_{21}$ & 
\multicolumn{1}{|l}{$x_{22}$} & \multicolumn{1}{|l}{$\cdots $} & 
\multicolumn{1}{|l}{$x_{k\alpha }$} & \multicolumn{1}{|l}{$\cdots $} & 
\multicolumn{1}{|l|}{$x_{2m}$} \\ \cline{2-7}
$\ \ \ \ \ \ p_{2}^{\prime }(u_{2}^{\prime })$ $\ \ :$ & $x_{21}^{\prime }$
& \multicolumn{1}{|l}{$x_{22}^{\prime }$} & \multicolumn{1}{|l}{$\cdots $} & 
\multicolumn{1}{|l}{$x_{k\alpha }$} & \multicolumn{1}{|l}{$\cdots $} & 
\multicolumn{1}{|l|}{$x_{2m}^{\prime }$} \\ \cline{2-7}
&  &  &  & $\vdots $ &  &  \\ \cline{2-7}
$\ \ \ \ \ \ p_{k}^{\prime }(u_{k})$ $\ \ :$ & $x_{k1}$ & 
\multicolumn{1}{|l}{$x_{k2}$} & \multicolumn{1}{|l}{$\cdots $} & 
\multicolumn{1}{|l}{$x_{k\alpha }$} & \multicolumn{1}{|l}{$\cdots $} & 
\multicolumn{1}{|l|}{$x_{km}$} \\ \cline{2-7}
$\ \ \ \ \ \ p_{k}^{\prime }(u_{k}^{\prime })$ $\ \ :$ & $x_{k1}^{\prime }$
& \multicolumn{1}{|l}{$x_{k2}^{\prime }$} & \multicolumn{1}{|l}{$\cdots $} & 
\multicolumn{1}{|l}{$x_{k\alpha }^{\prime }$} & \multicolumn{1}{|l}{$\cdots $%
} & \multicolumn{1}{|l|}{$x_{km}^{\prime }$} \\ \cline{2-7}
&  &  &  & $\vdots $ &  &  \\ \cline{2-7}
$p_{n-1}^{\prime }(u_{n-1})$ $\ \ :$ \ \  & $x_{n-1,1}$ & 
\multicolumn{1}{|l}{$x_{n-1,2}$} & \multicolumn{1}{|l}{$\cdots $} & 
\multicolumn{1}{|l}{$x_{k\alpha }^{\prime }$} & \multicolumn{1}{|l}{$\cdots $%
} & \multicolumn{1}{|l|}{$x_{n-1,m}$} \\ \cline{2-7}
$p_{n-1}^{\prime }(u_{n-1}^{\prime })$ $\ \ :$ \ \  & $x_{n-1,1}^{\prime }$
& \multicolumn{1}{|l}{$x_{n-1,2}^{\prime }$} & \multicolumn{1}{|l}{$\cdots $}
& \multicolumn{1}{|l}{$x_{k\alpha }^{\prime }$} & \multicolumn{1}{|l}{$%
\cdots $} & \multicolumn{1}{|l|}{$x_{n-1,m}^{\prime }$} \\ \cline{2-7}
$\quad \,\ \ p_{n}(u_{n})$ $\ \ :$ & $x_{n1}$ & \multicolumn{1}{|l}{$x_{n2}$}
& \multicolumn{1}{|l}{$\cdots $} & \multicolumn{1}{|l}{$x_{n\alpha }$} & 
\multicolumn{1}{|l}{$\cdots $} & \multicolumn{1}{|l|}{$x_{nm}$} \\ 
\cline{2-7}
\end{tabular}
\text{ ,}
\label{grid3}
\end{equation}
where $p_{j}^{\prime }(u_{j})$ and $p_{j}^{\prime }(u_{j}^{\prime })$ in (\ref{grid3}), for $2\leq j\leq n-1$, are
the translations obtained from $p_{j}(u_{j})$ and $p_{j}(u_{j}^{\prime })$ in (\ref{grid1}) respectively, by relabeling the leaves
in column $c_{\alpha }$.\smallskip

Applying one of the processes used in cases (i), (ii) and (iii), as
applicable, to each of its columns, we can clearly contract grid (\ref{grid1}%
) to,%
\begin{equation*}
\begin{tabular}{l|l|l|l|l|l|l|}
\cline{2-7}
& $c_{1}$ & $c_{2}$ &  & $c_{\alpha }$ &  & $c_{m}$ \\ \cline{2-7}
$\ \ \ \ \ \ p_{1}(u_{1}^{\prime })$ $\ \ :$ & $x_{11}^{\prime }$ & $%
x_{12}^{\prime }$ & $\cdots $ & $x_{1\alpha }^{\prime }$ & $\cdots $ & $%
x_{1m}^{\prime }$ \\ \cline{2-7}
$\ \ \ \ \ \ p_{2}^{\prime }(u_{2})$ $\ \ :$ & $x_{21}$ & $x_{22}$ & $\cdots 
$ & $x_{2\alpha }$ & $\cdots $ & $x_{2m}$ \\ \cline{2-7}
$p_{n-1}^{\prime }(u_{n-1}^{\prime })$ $\ \ :$ \ \  & $x_{n-1,1}^{\prime }$
& $x_{n-1,2}^{\prime }$ & $\cdots $ & $x_{n-1,\alpha }^{\prime }$ & $\cdots $
& $x_{n-1,m}^{\prime }$ \\ \cline{2-7}
$\quad \,\ \ p_{n}(u_{n})$ $\ \ :$ & $x_{n1}$ & $x_{n2}$ & $\cdots $ & $%
x_{n\alpha }$ & $\cdots $ & $x_{nm}$ \\ \cline{2-7}
\end{tabular}%
\text{ ,}
\end{equation*}%
where $p_{2}^{\prime }(u_{2})\overset{\mathsf{H}^{\prime }\cup \mathsf{H}%
^{\prime -1}\cup \mathsf{I}_{A_1\cup A_2}}{\rightrightarrows }p_{n-1}^{\prime
}(u_{n-1}^{\prime })$. 
So, we can write
\begin{equation*}
p_{1}(u_{1}^{\prime })\preccurlyeq p_{2}^{\prime }(u_{2})
\overset{\mathsf{H}^{\prime }\cup \mathsf{H}^{\prime -1}\cup \mathsf{I}
_{A_1\cup A_2}}{\rightrightarrows }p_{n-1}^{\prime }(u_{n-1}^{\prime
})\preccurlyeq p_{n}(x_{n})
\text{.}
\end{equation*}
This completes the proof.
\end{proof}

\newpage

\begin{theorem}\label{main}
$\mathcal{F}$-\textbf{Oalg}$_{\leq }^{0}$ have special amalgamation property. 
\end{theorem}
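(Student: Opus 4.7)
By Corollary \ref{Cor2} (equivalently Corollary \ref{cor4Eq} combined with Propositions \ref{PropDom} and \ref{Prop1}), it suffices to prove the following: given a subalgebra $\mathbf{C}$ of $\mathbf{A} \in \mathcal{F}$-\textbf{Oalg}$^{0}_{\leq}$ and the associated special amalgam from Figure \ref{spalg}, any $x \in A$ satisfying $\mu_{1}(\alpha_{1}(x)) = \mu_{2}(\alpha_{2}(x))$ lies in $C$. The assumption gives $[\alpha_{1}(x)]_{\Phi} = [\alpha_{2}(x)]_{\Phi}$, hence $\alpha_{1}(x) \underset{\Phi}{\preccurlyeq} \alpha_{2}(x)$, and by Lemma \ref{Generated} there is a scheme
\[
\alpha_{1}(x) \preccurlyeq p_{1}(y_{1}) \overset{\widehat{\mathsf{H}} \cup \widehat{\mathsf{H}}^{-1}}{\longrightarrow} p_{1}(y_{1}') \preccurlyeq \cdots \preccurlyeq p_{n}(y_{n}) \overset{\widehat{\mathsf{H}} \cup \widehat{\mathsf{H}}^{-1}}{\longrightarrow} p_{n}(y_{n}') \preccurlyeq \alpha_{2}(x)
\]
in $\widehat{\mathbf{T}(X)}$ with $X = A_{1} \dot{\cup} A_{2}$. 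The plan is to normalize this scheme in two stages and then read off $x \in C$ from the reduced form.

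In the first stage, use Lemma \ref{Lemma 4} to push every $\mathsf{R}_{A_{i}}$-arrow leftward past the $\preccurlyeq$-step that immediately precedes it, and use Lemma \ref{Lemma 3} dually to push every $\mathsf{R}_{A_{i}}^{-1}$-arrow rightward past the $\preccurlyeq$-step that immediately follows it. Because the endpoints $\alpha_{1}(x)$ and $\alpha_{2}(x)$ are single leaves in $T(X)$, any $\mathsf{R}_{A_{i}}$-arrow arriving at the extreme left (respectively any $\mathsf{R}_{A_{i}}^{-1}$-arrow arriving at the extreme right) must be induced by the trivial regular term $t = z_{1}$, so it collapses to an identity step and can be discarded. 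After this bookkeeping, the remaining scheme has only $\mathsf{H}' \cup \mathsf{H}'^{-1}$ arrows interleaved with $\preccurlyeq$-steps. In the second stage, apply Lemma \ref{Lemma 5} to compress the entire $\mathsf{H}'$-chain into a single parallel step, yielding
\[
\alpha_{1}(x) \preccurlyeq T_{1} \overset{\mathsf{H}' \cup \mathsf{H}'^{-1} \cup \mathsf{I}_{A_{1} \cup A_{2}}}{\rightrightarrows} T_{2} \preccurlyeq \alpha_{2}(x).
\]

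Since $\preccurlyeq$ preserves skeletons and $\alpha_{1}(x)$, $\alpha_{2}(x)$ are single leaves, $T_{1}$ and $T_{2}$ are also single leaves, with $T_{1} \in A_{1}$ satisfying $\alpha_{1}(x) \leq_{A_{1}} T_{1}$ and $T_{2} \in A_{2}$ satisfying $T_{2} \leq_{A_{2}} \alpha_{2}(x)$. The single-leaf $\rightrightarrows$-step then forces the underlying regular term to be $z_{1}$, so $(T_{1}, T_{2}) \in \mathsf{H}' \cup \mathsf{H}'^{-1} \cup \mathsf{I}_{A_{1} \cup A_{2}}$. The disjointness $A_{1} \cap A_{2} = \emptyset$ rules out both $\mathsf{I}_{A_{1} \cup A_{2}}$ (which would force $T_{1} = T_{2}$) and $\mathsf{H}'^{-1}$ (which would put $\phi_{2}(c) \in A_{1}$), so $(T_{1}, T_{2}) \in \mathsf{H}'$, i.e.\ $T_{1} = \alpha_{1}(c)$ and $T_{2} = \alpha_{2}(c)$ for some $c \in C$. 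Because $\alpha_{1}$ and $\alpha_{2}$ are order-embeddings, $\alpha_{1}(x) \leq_{A_{1}} \alpha_{1}(c)$ and $\alpha_{2}(c) \leq_{A_{2}} \alpha_{2}(x)$ translate to $x \leq_{A} c$ and $c \leq_{A} x$, whence $x = c \in C$.

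The main obstacle is the normalization step: although Lemmas \ref{Lemma 3} and \ref{Lemma 4} only perform local swaps of $\mathsf{R}$-arrows with immediately adjacent $\preccurlyeq$-steps, iterating these swaps across a scheme that interleaves $\mathsf{R}$- and $\mathsf{H}'$-arrows produces configurations where two $\to$-arrows become adjacent. One must insert reflexive $\preccurlyeq$-steps as buffers or organize the argument as an induction on the multiset of $\mathsf{R}$-positions, verifying that every $\mathsf{R}$- or $\mathsf{R}^{-1}$-arrow is eventually driven to an endpoint where it trivializes, without creating new ones elsewhere. Once this combinatorial reorganization is carried out, the rest of the proof is the direct application of Lemma \ref{Lemma 5} and the case analysis sketched above.
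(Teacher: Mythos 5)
Your framing (reduce via Proposition \ref{Prop1}/Corollary \ref{cor4Eq} to showing that $\mu_1(\alpha_1(x))=\mu_2(\alpha_2(x))$ forces $x\in C$, then normalize the scheme and finish by antisymmetry and the order-embedding property) matches the paper's strategy, and your endgame is correct — indeed, thanks to the reduction to the pair $(\alpha_1(x),\alpha_2(x))$ you need only one of the two schemes, a small simplification of the paper's Case~1. But the first stage of your normalization has a genuine gap, and it is not the ``combinatorial reorganization'' you defer to at the end: Lemmas \ref{Lemma 3} and \ref{Lemma 4} only commute $\mathsf{R}_{A_i}^{\pm1}$-arrows with adjacent $\preccurlyeq$-steps, and there is no move at all that takes such an arrow past an $\mathsf{H}'$-arrow, nor past an opposing $\mathsf{R}$-arrow. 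Your claim that ``every $\mathsf{R}$- or $\mathsf{R}^{-1}$-arrow is eventually driven to an endpoint where it trivializes'' is false as stated. Consider $x=t^{\mathbf{C}}(c_1,c_2)$ and the scheme
\begin{equation*}
\alpha_1(x)\overset{\mathsf{R}_{A_1}^{-1}}{\longrightarrow}t(\phi_1(c_1),\phi_1(c_2))
\overset{\mathsf{H}'}{\longrightarrow}t(\phi_2(c_1),\phi_1(c_2))
\overset{\mathsf{H}'}{\longrightarrow}t(\phi_2(c_1),\phi_2(c_2))
\overset{\mathsf{R}_{A_2}}{\longrightarrow}\alpha_2(x)\text{.}
\end{equation*}
The $\mathsf{R}_{A_1}^{-1}$-arrow already sits at the left endpoint and is not trivial; pushing it rightward is impossible because the $\mathsf{H}'$-arrows relabel leaves that exist only after the blow-up, so no amount of inserting reflexive $\preccurlyeq$-buffers or reordering positions removes it. The only way such arrows disappear is not by reaching an endpoint but by pairing an $\mathsf{R}^{-1}$ with a matching $\mathsf{R}$ across the intervening $\mathsf{H}'$-chain and absorbing the pair, which requires the algebraic input that $\mathbf{C}$ is a subalgebra, i.e.\ $t^{\mathbf{A}_i}(\phi_i(c_1),\ldots,\phi_i(c_k))=\phi_i\bigl(t^{\mathbf{C}}(c_1,\ldots,c_k)\bigr)$, together with a case analysis on how the two affected subterm positions relate (disjoint positions commute; properly covering positions cancel or collapse to an $\mathsf{H}'$-step; nested positions are handled by evaluating the inner subterm first).

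That pairing-and-absorption analysis is precisely the content of the paper's proof (the maximal segment $\mathsf{R}^{-1}\cdots\mathsf{R}$ compressed by Lemma \ref{Lemma 5} and then treated in Cases 2a--2d), and it is where all the work lies; your proposal neither carries it out nor identifies its mechanism, instead predicting that each $\mathsf{R}^{\pm1}$-arrow can be eliminated individually by transport to an endpoint, which the example above rules out. Until you replace the first stage by an argument of this kind — an induction on the number of $\mathsf{R}^{\pm1}$-arrows in which a suitably chosen adjacent pair is swapped, cancelled, or absorbed into $\mathsf{H}'$ using the subalgebra property and a covering case analysis — the reduction to the single-leaf case, and hence the theorem, is not established.
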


\begin{proof}
Let $(\mathbf{C};\mathbf{A}_1,\mathbf{A}_2 ;\phi _{1},\phi _{2})$ be a special amalgam in a variety $\mathcal{F}$-\textbf{Oalg}$_{\leq }^{0}$.
Let $x\in A_{1}$, $y\in A_{2}$ be such that $\mu _{1}\left( x\right) =\mu
_{2}(y)$ in $\mathbf{A}_{1}\amalg _{\mathbf{C}}\mathbf{A}_{2}$. Then there
exist schemes
\begin{equation}
\begin{tabular}{l}
$x\preccurlyeq p_{1}(x_{1}) \overset{\widehat{\mathsf{H}}\dot{\cup}\widehat{\mathsf{H}}^{-1}}{\longrightarrow }
p_{1}(x_{1}^{\prime })\preccurlyeq p_{2}(x_{2})
\overset{\widehat{\mathsf{H}}\dot{\cup}\widehat{\mathsf{H}}^{-1}}{\longrightarrow }
p_{2}(x_{2}^{\prime })\preccurlyeq \cdots \preccurlyeq
p_{n}(x_{n})
\overset{\widehat{\mathsf{H}}\dot{\cup}\widehat{\mathsf{H}}^{-1}}{\longrightarrow }
 p_{n}(x_{n}^{\prime })\preccurlyeq y$ \\ 
\\ 
$y\preccurlyeq q_{1}(y_{1})  \overset{\widehat{\mathsf{H}}\dot{\cup}\widehat{\mathsf{H}}^{-1}}{\longrightarrow }
q_{1}(y_{1}^{\prime })\preccurlyeq q_{2}(y_{2})
 \overset{\widehat{\mathsf{H}}\dot{\cup}\widehat{\mathsf{H}}^{-1}}{\longrightarrow }
q_{2}(y_{2}^{\prime })\preccurlyeq \cdots \preccurlyeq 
q_{m}(y_{m})
 \overset{\widehat{\mathsf{H}}\dot{\cup}\widehat{\mathsf{H}}^{-1}}{\longrightarrow }
 q_{m}(y_{m}^{\prime })\preccurlyeq x.$
\end{tabular}
\label{Sequences}
\end{equation}

\textbf{Case 1.} Let all the trees representing the terms $p_{i}(x_{i})$, $p_{i}(x_{i}^{\prime })$, $q_{j}(y_{j})$, $q_{j}(y_{j}^{\prime })$ in 
(\ref{Sequences}) have only one node. Then these nodes must belong to $A_1 \cup A_2  \cup \mathcal{F}_{0}^{\mathbf{A}_1} \cup \mathcal{F}_{0}^{\mathbf{A}_2}$, 
and therefore $\overset{\widehat{\mathsf{H}}\dot{\cup}\widehat{\mathsf{H}}^{-1}}{\longrightarrow }=
\overset{\mathsf{H}^{\prime } \dot{\cup} \mathsf{H}^{\prime -1} \cup \mathsf{I}
_{A_1\cup A_2} }{\longrightarrow }.
$
Consequently, we have from (\ref{Sequences})
\begin{equation*}
x\leq_{A_1} \phi _{1}(u_{1}) \overset{\mathsf{H}^{\prime}}{\longrightarrow }
\phi _{2}(u_{1})\leq_{A_2}
\phi_{2}(u_{2}) \overset{\mathsf{H}^{\prime -1}}{\longrightarrow }
\phi _{1}(u_{2})\leq_{A_1} \cdots \leq_{A_1} 
\phi_{1}(u_{n}) \overset{\mathsf{H}^{\prime}}{\longrightarrow }
\phi _{2}(u_{n})\leq_{A_2} y\text{,}
\end{equation*}%
\begin{equation*}
y\leq_{A_2} \phi _{2}(v_{1}) \overset{\mathsf{H}^{\prime -1}}{\longrightarrow }
\phi _{1}(v_{1})\leq_{A_1} \phi _{1}(v_{2})
\overset{\mathsf{H}^{\prime}}{\longrightarrow }
\phi _{2}(v_{2})\leq_{A_2} \cdots \leq_{A_2} \phi_{2}(v_{m})
\overset{\mathsf{H}^{\prime -1}}{\longrightarrow } 
\phi _{1}(v_{m})\leq_{A_1} x\text{,}
\end{equation*}%
where $u_{i}$, $v_{j}\in C$, $1\leq i\leq n$, $1\leq j\leq m$. Applying $\nu ^{-1}$ across the
inequalities in $\mathbf{A}_{2}$, we get
\begin{equation*}
x\leq _{A_1}\phi _{1}(u_{1})\leq _{A_1}\phi _{1}(u_{2})\leq _{A_1}\cdots \leq
_{A_1}\phi _{1}(u_{n})\leq _{A_1}\nu ^{-1}(y)
\end{equation*}%
\begin{equation*}
\nu ^{-1}(y)\leq _{A_1}\phi _{1}(v_{1})\leq _{A_1}\phi _{1}(v_{2})\leq
_{A_1}\cdots \leq _{A_1}\phi _{1}(v_{m})\leq _{A_1}x\text{.}
\end{equation*}%
This implies that $x=\phi _{1}(u_{1})=\phi _{1}(u_{2})=\cdots =\phi
_{1}(u_{n})=\nu ^{-1}(y)=\phi _{1}(v_{1})=\phi _{1}(v_{2})=\cdots =\phi
_{1}(v_{m})$. Since $\phi _{1}$ is an order-embedding we can further assert
that $u_{1}=u_{2}=\cdots =u_{n}=v_{1}=v_{2}=\cdots =v_{m}=z\in C$, say. So, we have 
\begin{equation*}
x=\phi _{1}(z)\text{, }y=\phi _{2}(z)\text{.}
\end{equation*}%
Thus $(\mathbf{C};\mathbf{A}_{1},\mathbf{A}_{2};\phi _{1},\phi _{2})$ is
embeddable. 

\textbf{Case 2.} Let some of the trees representing the terms $p_{i}(x_{i})$%
, $p_{i}(x_{i}^{\prime })$, $q_{j}(y_{j})$, $q_{j}(y_{j}^{\prime })$, $1\leq
i\leq n$, $1\leq j\leq m$, in (\ref{Sequences}) have more than one node.
We show that this case can be reduced to Case 1. To this end, 
we shall only consider the scheme from $x$ to $y$, viz.
\begin{equation}
x\preccurlyeq p_{1}(x_{1}) \overset{\widehat{\mathsf{H}}\dot{\cup}\widehat{\mathsf{H}}^{-1}}{\longrightarrow }
p_{1}(x_{1}^{\prime })\preccurlyeq p_{2}(x_{2})
\overset{\widehat{\mathsf{H}}\dot{\cup}\widehat{\mathsf{H}}^{-1}}{\longrightarrow }
p_{2}(x_{2}^{\prime })\preccurlyeq \cdots \preccurlyeq
p_{n}(x_{n})
\overset{\widehat{\mathsf{H}}\dot{\cup}\widehat{\mathsf{H}}^{-1}}{\longrightarrow }
 p_{n}(x_{n}^{\prime })\preccurlyeq y
\text{.}  \label{Sequence}
\end{equation}%
By analogy our argument will also work for the scheme
from $y$ to $x$. Because $x$ and $y$ both have trees with just one node, between $\overset{\mathsf{H}}{\longrightarrow }$ and 
$\overset{\mathsf{H}^{-1}}{\longrightarrow }$
we shall first encounter relations of type
\begin{equation*}
p_{i}(x_{i})\overset{\mathsf{H}^{-1}}{\longrightarrow }p_{i}(x_{i}^{\prime })
\text{,}
\end{equation*}%
while writing scheme (\ref{Sequence}); note that $p_{j}(x_{j})\overset{\mathsf{H}}{\longrightarrow } p_{j}(x_{j}^{\prime })$
reduces to $p_{j}(x_{j})=p_{j}(x_{j})$ if $x_j\in \mathcal{F}_{0}^{\mathbf{A}_{1}}\cup \mathcal{F}_{0}^{\mathbf{A}_{2}}.$

By a similar token, moving backwards
from $y$ to $x$ in (\ref{Sequence}) we shall first encounter
relations of type%
\begin{equation*}
p_{i}(x_{i})\overset{\mathsf{H}}{\longrightarrow }p_{i}(x_{i}^{\prime })%
\text{.}
\end{equation*}%
This implies that there exists a segment of the following type in the scheme 
under consideration,
\begin{equation*}
p_{s}(x_{s})\overset{\mathsf{H}^{-1}}{\longrightarrow }p_{s}(x_{s}^{\prime
})\preccurlyeq p_{s+1}(x_{s+1})\overset{\mathsf{H}^{^{\prime }}\cup \mathsf{H%
}^{\prime -1}}{\longrightarrow }p_{s+1}(x_{s+1}^{\prime })\preccurlyeq
p_{s+2}(x_{s+2})\overset{\mathsf{H}^{^{\prime }}\cup \mathsf{H}^{\prime -1}}{%
\longrightarrow }
\end{equation*}%
\begin{equation}
\cdots \overset{\mathsf{H}^{^{\prime }}\cup \mathsf{H}^{\prime -1}}{
\longrightarrow }p_{s+t-1}(x_{s+t-1}^{\prime })\preccurlyeq p_{s+t}(x_{s+t})
\overset{\mathsf{H}}{\longrightarrow }p_{s+t}(x_{s+t}^{\prime })\text{
,\medskip }
\label{seg1}
\end{equation}
such that for all $j<s$, we have no relations of the form 
 $p_{j}(x_{j})\overset{\mathsf{H}}{\longrightarrow }p_{j}(x_{j}^{\prime })$.\medskip

\noindent Using Lemma (\ref{Lemma 5}) we can rewrite the segment (\ref{seg1}) as
\begin{equation}
p_{s}(x_{s}) \overset{\mathsf{H}^{-1}}{\longrightarrow } p_{s}(x_{s}^{\prime }) \preccurlyeq
p_{s+1}^{\prime }(x_{s+1})
\overset{\mathsf{H}^{\prime }\cup \mathsf{H}^{\prime -1}\cup \mathsf{I}%
_{A_{1}\cup A_{2}}}{\rightrightarrows }
p_{s+t-1}^{\prime }(x_{s+t-1}^{\prime }) \preccurlyeq p_{s+t}(x_{s+t})
\overset{\mathsf{H}}{\longrightarrow }p_{s+t}(x_{s+t}^{\prime})\text{.}  
\label{segment1}
\end{equation}%
By exhausting all the possibilities we shall first show that it is always
possible to replace segment (\ref{segment1}) by one in which either $\mathsf{%
H}^{-1}$ and $\mathsf{H}$ are either swapped or at least one of the symbols $%
\mathsf{H}^{-1}$ and $\mathsf{H}$ vanishes. \medskip

\noindent \textbf{Case 2a. } Assume that 
\begin{eqnarray*}
p_{s}(x_{s}) &=&t_{s}(a_{1},\ldots ,t^{\mathbf{A}_{i}}(a_{r},\ldots
,a_{r+s}),\ldots ,t^{\prime }(a_{r^{\prime }},\ldots ,a_{r^{\prime
}+s^{\prime }}),\ldots ,a_{m}) \\
p_{s}(x_{s}^{\prime }) &=&t_{s}(a_{1},\ldots ,t(a_{r},\ldots
,a_{r+s}),\ldots ,t^{\prime }(a_{r^{\prime }},\ldots ,a_{r^{\prime
}+s^{\prime }}),\ldots ,a_{m}) \\
p_{s+t}(x_{s+t}) &=&t_{s}(a_{1}^{\prime },\ldots ,t(a_{r}^{\prime },\ldots
,a_{r+s}^{\prime }),\ldots ,t^{\prime }(a_{r^{\prime }}^{\prime },\ldots
,a_{r^{\prime }+s^{\prime }}^{\prime }),\ldots ,a_{m}^{\prime }) \\
p_{s+t}(x_{s+t}^{\prime }) &=&t_{s}(a_{1}^{\prime },\ldots ,t(a_{r}^{\prime
},\ldots ,a_{r+s}^{\prime }),\ldots ,t^{\prime \mathbf{A}_{j}}(a_{r^{\prime
}}^{\prime },\ldots ,a_{r^{\prime }+s^{\prime }}^{\prime }),\ldots
,a_{m}^{\prime })\text{,}
\end{eqnarray*}%
where $i,j\in \{1,2\}$, $(a_{\alpha },a_{\alpha }^{\prime })\in \mathsf{H}%
^{\prime }\cup \mathsf{H}^{\prime -1}\cup \mathsf{I}_{A_{1}\cup A_{2}}$, $%
1\leq \alpha \leq m$, and%
\begin{eqnarray*}
x_{s} &=&t^{\mathbf{A}_{i}}(a_{r},\ldots ,a_{r+s}) \\
x_{s}^{\prime } &=&t(a_{r},\ldots ,a_{r+s}) \\
x_{s+t} &=&t^{\prime }(a_{r^{\prime }}^{\prime },\ldots ,a_{r^{\prime
}+s^{\prime }}^{\prime }) \\
x_{s+t}^{\prime } &=&t^{\prime \mathbf{A}_{j}}(a_{r^{\prime }}^{\prime
},\ldots ,a_{r^{\prime }+s^{\prime }}^{\prime })\text{.}
\end{eqnarray*}%
In this case we take,
\begin{eqnarray*}
p_{s}(x_{s}) &=&t_{s}(a_{1},\ldots ,t^{\mathbf{A}_{i}}(a_{r},\ldots,a_{r+s}),\ldots ,t^{\prime }(a_{r^{\prime }},\ldots ,a_{r^{\prime
}+s^{\prime }}),\ldots ,a_{m}) \\
p_{s}^{\prime }(x_{s+t}) &=&t_{s}(a_{1}^{\prime },\ldots ,t^{\mathbf{A}_{i}}(a_{r},\ldots ,a_{r+s}),\ldots ,t^{\prime }(a_{r^{\prime }}^{\prime
},\ldots ,a_{r^{\prime }+s^{\prime }}^{\prime }),\ldots ,a_{m}^{\prime }) \\
p_{s}^{\prime }(x_{s+t}^{\prime }) &=&t_{s}(a_{1}^{\prime },\ldots ,t^{
\mathbf{A}_{i}}(a_{r},\ldots ,a_{r+s}),\ldots ,t^{\prime \mathbf{A}
_{j}}(a_{r^{\prime }}^{\prime },\ldots ,a_{r^{\prime }+s^{\prime }}^{\prime
}),\ldots ,a_{m}^{\prime }) \\
p_{s+t}^{\prime}(x_{s}^{\prime }) &=&t_{s}(a_{1}^{\prime },\ldots
,t(a_{r},\ldots ,a_{r+s}),\ldots ,t^{\prime \mathbf{A}_{j}}(a_{r^{\prime
}}^{\prime },\ldots ,a_{r^{\prime }+s^{\prime }}^{\prime }),\ldots
,a_{m}^{\prime }) \\
p_{s+t}(x_{s+t}^{\prime }) &=&t_{s}(a_{1}^{\prime },\ldots ,t(a_{r}^{\prime
},\ldots ,a_{r+s}^{\prime }),\ldots ,t^{\prime \mathbf{A}_{j}}(a_{r^{\prime
}}^{\prime },\ldots ,a_{r^{\prime }+s^{\prime }}^{\prime }),\ldots
,a_{m}^{\prime })\text{.}
\end{eqnarray*}%
So, we can re-write segment (\ref{segment1}) as,%
\begin{equation*}
p_{s}(x_{s})\overset{\mathsf{H}^{\prime }\cup \mathsf{H}^{\prime -1}\cup 
\mathsf{I}_{A_{1}\cup A_{2}}}{\rightrightarrows }p_{s}^{\prime }(x_{s+t})
\overset{\mathsf{H}}{\longrightarrow }p_{s}^{\prime }(x_{s+t}^{\prime
})= p_{s+t}^{\prime }(x_{s})\overset{\mathsf{H}^{-1}}{\longrightarrow }
p_{s+t}^{\prime }(x_{s}^{\prime })\overset{\mathsf{H}^{\prime }\cup 
\mathsf{H}^{\prime -1}\cup \mathsf{I}_{A_{1}\cup A_{2}}}{\rightrightarrows }
p_{s+t}(x_{s+t}^{\prime })\text{,}\bigskip
\end{equation*}

The dual case when 
$
p_{s}(x_{s}) = t_{s}(a_{1},\ldots ,t^{\prime }(a_{r^{\prime }},\ldots ,a_{r^{\prime}+s^{\prime }}),\ldots ,t^{\mathbf{A}_{i}}(a_{r},\ldots ,a_{r+s}),
\ldots ,a_{m}) 
$
can be dealt with in a similar way.

\noindent \textbf{Case 2b. }If in (\ref{segment1}) $x_{s}^{\prime }$
properly covers $x_{s+t}$, then we we have%
\begin{eqnarray*}
p_{s}(x_{s}) &=&t_{s}(a_{1},\ldots ,t^{\mathbf{A}_{i}}(a_{r}\ldots
,a_{r^{\prime }}),\ldots ,a_{m}) \\
p_{s}(x_{s}^{\prime }) &=&t_{s}(a_{1},\ldots ,t(a_{r},\ldots ,a_{r^{\prime
}}),\ldots ,a_{m}) \\
p_{s+t}(x_{s+t}) &=&t_{s}(a_{1}^{\prime },\ldots ,t(a_{r}^{\prime },\ldots
,a_{r^{\prime }}^{\prime }),\ldots ,a_{m}^{\prime }) \\
p_{s+t}(x_{s+t}^{\prime }) &=&t_{s}(a_{1}^{\prime },\ldots ,t^{\mathbf{A}%
_{j}}(a_{r}^{\prime },\ldots ,a_{r^{\prime }}^{\prime }),\ldots
,a_{m}^{\prime })
\end{eqnarray*}%
where $i,j\in \{1,2\}$ and where%
\begin{eqnarray*}
x_{s} &=&t^{\mathbf{A}_{i}}(a_{r}\ldots ,a_{r^{\prime }}) \\
x_{s}^{\prime } &=&t(a_{r},\ldots ,a_{r^{\prime }}) \\
x_{s+t} &=&t(a_{r}^{\prime },\ldots ,a_{r^{\prime }}^{\prime }) \\
x_{s+t}^{\prime } &=&t^{\mathbf{A}_{j}}(a_{r}^{\prime },\ldots
,a_{r^{\prime }}^{\prime })\text{.}
\end{eqnarray*}%
Now, if $i=j$ then $a_{l}=a_{l}^{\prime }$, $r\leq l\leq r^{\prime }$, and
we can write a sequence%
\begin{equation*}
p_{s}(x_{s})\overset{\mathsf{H}^{\prime }\cup \mathsf{H}^{\prime -1}\cup 
\mathsf{I}_{A_{1}\cup A_{2}}}{\rightrightarrows }p_{s+t}(x_{s+t}^{\prime })%
\text{.}
\end{equation*}%
On the contrary, if $i\not= j$, then we may assume without losing generality that $i=1$ and $j=2$. 
Then $a_{l}=\phi _{1}(c_{l})$ and $a_{l}^{\prime }=\phi _{2}(c_{l})$
, for all $r\leq l\leq r^{\prime }$. This implies that 
\[t^{\mathbf{A}_{2}}(a_{r}^{\prime },\ldots
,a_{r^{\prime }}^{\prime })=
t^{\mathbf{A}_{2}}(\phi _{2}(c_{r}),\ldots ,\phi_{2}(c_{r^{\prime }}))=\phi
_{2}t^{\mathbf{C}}(c_{r},\ldots ,c_{r^{\prime }})
\]
\[t^{\mathbf{A}_{1}}(a_{r},\ldots ,a_{r^{\prime }})=
t^{\mathbf{A}_{1}}(\phi _{1}(c_{r}),\ldots ,\phi_{1}(c_{r^{\prime }}))=\phi
_{1}t^{\mathbf{C}}(c_{r},\ldots ,c_{r^{\prime }})
\]
and we can again write%
\begin{equation*}
p_{s}(x_{s})\overset{\mathsf{H}^{\prime }\cup \mathsf{H}^{\prime -1}\cup 
\mathsf{I}_{A_{1}\cup A_{2}}}{\rightrightarrows }p_{s+t}(x_{s+t}^{\prime })%
\text{.}
\end{equation*}

\noindent \textbf{Case 2c. }If in (\ref{segment1}) $x_{s}^{\prime }$ covers $%
x_{s+t}$, then we have the following subcases.\medskip

\noindent \textbf{(i)} Let $x_{s}$ and $x_{s+t}^{\prime }$ be both in $A_{i}$%
, $i\in \{1,2\}$. Assume that
\begin{eqnarray*}
p_{s}(x_{s}) &=&t_{s}(a_{1},\ldots ,t^{\mathbf{A}_{i}}(a_{r},\ldots
,t^{\prime }(a_{k},\ldots ,a_{k^{\prime }}),\ldots ,a_{r^{\prime }}),\ldots
,a_{m}) \\
p_{s}(x_{s}^{\prime }) &=&t_{s}(a_{1},\ldots ,t(a_{r},\ldots ,t^{\prime
}(a_{k},\ldots ,a_{k^{\prime }}),\ldots ,a_{r^{\prime }}),\ldots ,a_{m}) \\
p_{s+t}(x_{s+t}) &=&t_{s}(a_{1}^{\prime },\ldots ,a_{r}^{\prime },\ldots
,t^{\prime }(a_{k}^{\prime},\ldots ,a_{k^{\prime }}^{\prime}),\ldots ,a_{r^{\prime }}^{\prime },\ldots ,a_{m}^{\prime }) \\
p_{s+t}(x_{s+t}^{\prime }) &=&t_{s}(a_{1}^{\prime },\ldots ,t^{\prime \mathbf{A}_{i}}(a_{k}^{\prime},\ldots ,a_{k^{\prime }}^{\prime}),\ldots ,a_{m}^{\prime })
\end{eqnarray*}%
where $i\in \{1,2\}$, $(a_{\alpha },a_{\alpha }^{\prime })\in \mathsf{H}%
^{\prime }\cup \mathsf{H}^{\prime -1}\cup \mathsf{I}_{A_{1}\cup A_{2}}$, $%
1\leq \alpha \leq m$, and%
\begin{eqnarray*}
x_{s} &=&t^{\mathbf{A}_{i}}(a_{r},\ldots ,t^{\prime }(a_{k},\ldots
,a_{k^{\prime }}),\ldots ,a_{r^{\prime }})\in A_{i} \\
x_{s}^{\prime } &=&t(a_{r},\ldots ,t^{\prime }(a_{k},\ldots ,a_{k^{\prime
}}),\ldots ,a_{r^{\prime }})\in T(A_{i}) \\
x_{s+t} &=&t^{\prime }(a_{k}^{\prime },\ldots ,a_{k^{\prime }}^{\prime })\in
T(A_{i}) \\
x_{s+t}^{\prime } &=&t^{\prime \mathbf{A}_{i}}(a_{k}^{\prime },\ldots
,a_{k^{\prime }}^{\prime })\in A_{i}\text{.}
\end{eqnarray*}%
Letting%
\begin{eqnarray*}
p_{s}(x_{s}) &=&t_{s}(a_{1},\ldots ,t^{\mathbf{A}_{i}}(a_{r},\ldots
,t^{\prime \mathbf{A}_{i}}(a_{k},\ldots ,a_{k^{\prime }}),\ldots
,a_{r^{\prime }}),\ldots ,a_{m}) \\
p_{s}^{\prime }(x_{s}) &=&t_{s}(a_{1}^{\prime },\ldots ,a_{r-1}^{\prime
},t^{\mathbf{A}_{i}}(a_{r},\ldots ,t^{\prime \mathbf{A}_{i}}(a_{k},\ldots
,a_{k^{\prime }}),\ldots ,a_{r^{\prime }}),a_{r^{\prime }+1}^{\prime
},\ldots ,a_{m}^{\prime }) \\
p_{s}^{\prime }(x_{s}^{\prime }) &=&t_{s}(a_{1}^{\prime },\ldots
,a_{r-1}^{\prime },t(a_{r},\ldots ,t^{\prime \mathbf{A}_{i}}(a_{k},\ldots
,a_{k^{\prime }}),\ldots ,a_{r^{\prime }}),a_{r^{\prime }+1}^{\prime
},\ldots ,a_{m}^{\prime }) \\
p_{s+t}(x_{s+t}^{\prime }) &=&t_{s}(a_{1}^{\prime },\ldots ,a_{r}^{\prime
},\ldots ,a_{k-1}^{\prime },t^{\prime \mathbf{A}_{i}}(a_{k},\ldots
,a_{k^{\prime }}),a_{k^{\prime }+1}^{\prime },\ldots ,a_{r^{\prime
}}^{\prime },\ldots ,a_{m}^{\prime })
\end{eqnarray*}%
one may write:%
\begin{equation*}
p_{s}(x_{s})\overset{\mathsf{H}^{\prime }\cup \mathsf{H}^{\prime -1}\cup 
\mathsf{I}_{A_{1}\cup A_{2}}}{\rightrightarrows }p_{s}^{\prime }(x_{s})%
\overset{\mathsf{H}^{-1}}{\longrightarrow }p_{s}^{\prime }(x_{s}^{\prime })%
\overset{\mathsf{H}^{\prime }\cup \mathsf{H}^{\prime -1}\cup \mathsf{I}%
_{A_{1}\cup A_{2}}}{\rightrightarrows }p_{s+t}(x_{s+t}^{\prime })\text{.}
\end{equation*}

\noindent \textbf{(ii)} If $x_{s}$ and $x_{s+t}^{\prime }$ are not in the
same $A_i$, $i\in \{1, 2\}$, then we may assume without loss of generality that $%
x_{s}\in A_{1}$ and $x_{s+t}^{\prime }\in A_{2}$. Let us suppose that in (%
\ref{segment1}),%
\begin{eqnarray*}
p_{s}(x_{s}) &=&t_{s}(a_{1},\ldots ,t^{\mathbf{A}_{1}}(a_{r},\ldots
,t^{\prime }(a_{k},\ldots ,a_{k^{\prime }}),\ldots ,a_{r^{\prime }}),\ldots
,a_{m}) \\
p_{s}(x_{s}^{\prime }) &=&t_{s}(a_{1},\ldots ,t(a_{r},\ldots ,t^{\prime
}(a_{k},\ldots ,a_{k^{\prime }}),\ldots ,a_{r^{\prime }}),\ldots ,a_{m}) \\
p_{s+t}(x_{s+t}) &=&t_{s}(a_{1}^{\prime },\ldots ,t(a_{r}^{\prime },\ldots
,t^{\prime }(a_{k}^{\prime },\ldots ,a_{k^{\prime }}^{\prime }),\ldots
,a_{r^{\prime }}^{\prime }),\ldots ,a_{m}^{\prime }) \\
p_{s+t}(x_{s+t}^{\prime }) &=&t_{s}(a_{1}^{\prime },\ldots ,t(a_{r}^{\prime
},\ldots ,t^{\prime \mathbf{A}_{2}}(a_{k}^{\prime },\ldots ,a_{k^{\prime
}}^{\prime }),\ldots ,a_{r^{\prime }}^{\prime }),\ldots ,a_{m}^{\prime })
\end{eqnarray*}%
\noindent where $(a_{\alpha },a_{\alpha }^{\prime })\in 
\mathsf{H}^{\prime }\cup \mathsf{H}^{\prime -1}\cup \mathsf{I}_{A_{1}\cup
A_{2}}$, $1\leq \alpha \leq m$, and%
\begin{eqnarray*}
x_{s} &=&t^{\mathbf{A}_{1}}(a_{r},\ldots ,t^{\prime }(a_{k},\ldots
,a_{k^{\prime }}),\ldots ,a_{r^{\prime }})\in A_{1} \\
x_{s}^{\prime } &=&t(a_{r},\ldots ,t^{\prime }(a_{k},\ldots ,a_{k^{\prime
}}),\ldots ,a_{r^{\prime }})\in T(A_{1}) \\
x_{s+t} &=&t^{\prime }(a_{k}^{\prime },\ldots ,a_{k^{\prime }}^{\prime })\in
T(A_{2}) \\
x_{s+t}^{\prime } &=&t^{\prime \mathbf{A}_{2}}(a_{k}^{\prime },\ldots
,a_{k^{\prime }}^{\prime })\in A_{2}\text{.}
\end{eqnarray*}%
Again, as in Case 2b, we have $a_{l}=\phi _{1}(c_{l})$ and $a_{l}^{\prime }=\phi _{2}(c_{l})$
, for all $k\leq l\leq k^{\prime }$. This implies that 
\[t^{\prime\mathbf{A}_{2}}(a_{k}^{\prime },\ldots
,a_{k^{\prime }}^{\prime })=
t^{\prime\mathbf{A}_{2}}(\phi _{2}(c_{k}),\ldots ,\phi_{2}(k_{r^{\prime }}))=\phi
_{2}t^{\prime\mathbf{C}}(c_{k},\ldots ,c_{k^{\prime }})
\]
\[t^{\prime\mathbf{A}_{1}}(a_{k},\ldots ,a_{k^{\prime }})=
t^{\prime\mathbf{A}_{1}}(\phi _{1}(c_{k}),\ldots ,\phi_{1}(c_{k^{\prime }}))=
\phi_{1}t^{\prime\mathbf{C}}(c_{k},\ldots ,c_{k^{\prime }})
\]
and therefore

\begin{eqnarray*}
p_{s}(x_{s}) &=&t_{s}(a_{1},\ldots ,t^{\mathbf{A}_{1}}(a_{r},\ldots ,
\phi_{1}t^{\prime\mathbf{C}}(c_{k},\ldots ,c_{k^{\prime }})
,\ldots ,a_{r^{\prime }}),\ldots ,a_{m})
\\
p_{s}^{\prime }(x_{s}^{\prime }) &=&t_{s}(a_{1},\ldots
,t(a_{r},\ldots ,
\phi_{1}t^{\prime\mathbf{C}}(c_{k},\ldots ,c_{k^{\prime }}),
\ldots ,a_{r^{\prime }}),\ldots ,a_{m}) \\
p_{s+t}(x_{s+t}^{\prime }) &=&t_{s}(a_{1}^{\prime },\ldots ,t(a_{r}^{\prime
},\ldots ,
\phi
_{2}t^{\prime\mathbf{C}}(c_{k},\ldots ,c_{k^{\prime }}),
\ldots
,a_{r^{\prime }}^{\prime }),\ldots ,a_{m}^{\prime })
\end{eqnarray*}

This allows us to write%
\begin{equation*}
p_{s}(x_{s})
\overset{\mathsf{H}^{-1}}{\longrightarrow }p_{s}^{\prime }(x_{s}^{\prime })%
\overset{\mathsf{H}^{\prime }\cup \mathsf{H}^{\prime -1}\cup \mathsf{I}%
_{A_{1}\cup A_{2}}}{\rightrightarrows }p_{s+t}(x_{s+t}^{\prime })\text{.}
\end{equation*}

\noindent \textbf{Case 2d.} When $x_{s+t}$ covers $x_{s}^{\prime }$. This case can be handled in a similar way as Case 2c. 

Iterating the procedures mentioned in different subcases of Case 2 we can remove all occurrences of $H$ and $H^{-1}$ in (\ref{Sequence}). This implies that Case 2 can be reduced to Case 1.
Hence, the theorem is proved. 
\end{proof}

\begin{corollary}
Epis are surjective in $\mathcal{F}$-\textbf{Oalg}$_{\leq }^{0}$.
\end{corollary}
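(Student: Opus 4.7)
The plan is simply to combine Theorem~\ref{main} with Corollary~\ref{Cor2}. Theorem~\ref{main} establishes that the varieties $\mathcal{F}$-\textbf{Oalg}$_{\leq}^{0}$ have the special amalgamation property, while Corollary~\ref{Cor2} asserts that, for any variety $\mathcal{V}$ of ordered algebras, epis are surjective in $\mathcal{V}$ if and only if $\mathcal{V}$ has the special amalgamation property. Chaining these two together yields the conclusion in a single line.

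Concretely, I would set $\mathcal{V} = \mathcal{F}$-\textbf{Oalg}$_{\leq}^{0}$, invoke Theorem~\ref{main} to conclude that $\mathcal{V}$ has the special amalgamation property, and then apply the ``if'' direction of Corollary~\ref{Cor2} to deduce that every epi in $\mathcal{V}$ is surjective. The only prerequisite to check is that Corollary~\ref{Cor2} is indeed applicable to $\mathcal{V}$, i.e.\ that $\mathcal{V}$ is a variety of ordered algebras admitting pushouts; the first is true by definition and the second was verified in Theorem~\ref{Pushout 1}.

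There is no genuine obstacle at this stage, since all the substantive work has already been carried out: the categorical bridge linking dominions, absolute closure, and the special amalgamation property was built in Section~\ref{SecAmal} (culminating in Corollary~\ref{Cor2}), and the delicate term-rewriting and grid-contraction arguments needed to establish special amalgamation for $\mathcal{F}$-\textbf{Oalg}$_{\leq}^{0}$ occupy the proof of Theorem~\ref{main}. The corollary is therefore essentially a bookkeeping remark recording the end product of the preceding two sections.
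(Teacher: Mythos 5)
Your proposal is correct and coincides with the paper's own proof, which likewise deduces the corollary directly from Theorem~\ref{main} together with Corollary~\ref{Cor2}. The extra remark about pushouts (via Theorem~\ref{Pushout 1}) is sensible bookkeeping but not treated separately in the paper.
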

\begin{proof}
Follows from Theorem \ref{main} and Corollary \ref{Cor2}. 
\end{proof}

\begin{corollary}
Epis are surjective in $\mathcal{F}$-\textbf{Alg}, the variety of all unordered algebras of type $\mathcal{F}$.
\end{corollary}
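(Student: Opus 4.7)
The plan is to deduce this from the previous corollary by lifting an unordered epimorphism to an ordered one via the trivial (equality) order. The key observation is that $\mathcal{F}$-\textbf{Oalg}$_{\leq}^{0}$, as defined in the paper, includes the variety axiomatized by the \emph{empty} set of inequalities $c\leq d$ with $c,d\in \mathcal{F}_{0}$, i.e.\ the variety of all ordered $\mathcal{F}$-algebras. Into this ambient variety we can faithfully embed the category $\mathcal{F}$-\textbf{Alg} by equipping each $\mathcal{F}$-algebra with the trivial order.

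First, given an epimorphism $f:\mathbf{A}\longrightarrow \mathbf{B}$ in $\mathcal{F}$-\textbf{Alg}, I would consider the ordered algebras $\mathbf{A}^{\ast}=(A,\mathcal{F}^{\mathbf{A}},=)$ and $\mathbf{B}^{\ast}=(B,\mathcal{F}^{\mathbf{B}},=)$ obtained by using equality as the partial order. Since every operation is automatically monotone with respect to $=$, this yields objects of $\mathcal{F}$-\textbf{Oalg}$_{\leq}^{0}$, and the same underlying map $f$ is a homomorphism $\mathbf{A}^{\ast}\longrightarrow \mathbf{B}^{\ast}$ of ordered $\mathcal{F}$-algebras.

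Second, I would verify that $f:\mathbf{A}^{\ast}\longrightarrow \mathbf{B}^{\ast}$ is again an epimorphism, now in $\mathcal{F}$-\textbf{Oalg}$_{\leq}^{0}$. Indeed, suppose $g,h:\mathbf{B}^{\ast}\longrightarrow \mathbf{C}$ are ordered $\mathcal{F}$-algebra homomorphisms with $g\circ f=h\circ f$. Applying the forgetful functor $\mathrm{F}$ to the category of unordered $\mathcal{F}$-algebras, we obtain $\mathrm{F}(g),\mathrm{F}(h):\mathbf{B}\longrightarrow \mathrm{F}(\mathbf{C})$ satisfying $\mathrm{F}(g)\circ f=\mathrm{F}(h)\circ f$. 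Since $f$ is an epi in $\mathcal{F}$-\textbf{Alg}, we get $\mathrm{F}(g)=\mathrm{F}(h)$, hence $g=h$ as functions and therefore as ordered homomorphisms.

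Finally, the previous corollary, applied to the instance of $\mathcal{F}$-\textbf{Oalg}$_{\leq}^{0}$ with the empty axiom set, yields that $f$ is surjective. There is no substantive obstacle here; the whole argument rests on the single trivial remark that equality is a partial order compatible with every $\mathcal{F}^{\mathbf{A}}$, so that any unordered epi becomes an ordered epi in an ambient $\mathcal{F}$-\textbf{Oalg}$_{\leq}^{0}$ in which it is already known to be surjective.
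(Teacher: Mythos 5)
Your proposal is correct and follows essentially the same route as the paper: the paper also lifts an unordered epi to an epi of trivially ordered algebras (its $\mathcal{F}$-\textbf{Oalg}$_{=}^{0}$) and then invokes the surjectivity result for $\mathcal{F}$-\textbf{Oalg}$_{\leq}^{0}$. You merely spell out the detail the paper leaves implicit (and had noted earlier via the forgetful functor), namely that a map out of a trivially ordered algebra is automatically monotone, so epi-ness is preserved.
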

\begin{proof}
Let $f$ be an epi in $\mathcal{F}$-\textbf{Alg}. Then, $f$ is also an epi in $\mathcal{F}$-\textbf{Oalg}$_{= }^{0}$.
This implies, by Theorem \ref{main}, that  $f$ is surjective. 
\end{proof}

\end{document}